\theoremstyle{plain} 
\newtheorem{theorem}[equation]{Theorem}
\newtheorem{proposition}[equation]{Proposition}
\newtheorem{corollary}[equation]{Corollary}
\theoremstyle{definition}
\newtheorem{definition}[equation]{Definition}
\theoremstyle{remark}
\newtheorem{remark}[equation]{Remark}
\numberwithin{equation}{section}
\def\speciallabelmark#1{\def\@currentlabel{#1}}
\def\makeoverbar#1#2#3#4#5#6#7{%
 \setbox0=\hbox{$\m@th#2\mkern#5mu{{}#3{}}\mkern#6mu$}%
 \setbox1=\null \dimen@=#4\fontdimen8#13 \dimen@=3.5\dimen@ 
 \advance\dimen@ by \ht0 \dimen@=-#7\dimen@ \advance\dimen@ by \wd0
 \ht1=\ht0 \dp1=\dp0 \wd1=\dimen@
 \dimen@=\fontdimen8#13 \fontdimen8#13=#4\fontdimen8#13
 \rlap{\hbox to \wd0{$\m@th\hss#2{\overline{\box1}}\mkern#5mu$}}
 \fontdimen8#13=\dimen@}
\def\0{\kern0pt\-\nobreak\hskip0pt\relax}
\newcommand{\re}{{\mathbb R}}    
\newcommand{\na}{{\mathbb N}}    
\newcommand{\cn}{{\mathbb C}}
\newcommand{\qn}{{\mathbb H}}    
\newcommand{\E}{{\mathbb E}}
\newcommand{\5}{\mskip-1.5mu}
\newcommand{\expv}[1]{\operatorname{\E}\5\5\left[ #1 \right]}
\begin{document}

\title[Fine asymptotics for models with Gamma type moments]{Fine asymptotics for models with Gamma type moments}

\author[P.~Eichelsbacher, L. Knichel]{Peter Eichelsbacher and Lukas Knichel}
\address[P.~Eichelsbacher]{Faculty of Mathematics, NA 3/66, Ruhr-Universit\"at Bochum, 44780 Bochum, Germany}
\email{\href{mailto:Peter.Eichelsbacher@ruhr-uni-bochum.de}{Peter.Eichelsbacher@ruhr-uni-bochum.de}}
\address[L.~Knichel]{Faculty of Mathematics, NA-S\"ud EG-7, Ruhr-Universit\"at Bochum, 44780 Bochum, Germany}
\email{\href{mailto:Lukas.Knichel@ruhr-uni-bochum.de}{Lukas.Knichel@ruhr-uni-bochum.de}}



\subjclass[2000]{91B28}
\subjclass{\href{http://www.ams.org/mathscinet/msc/msc2010.html?t=60Fxx}{15B52, 15A15, 52A22, 52A23, 60B20, 60D05, 60F05, 60F10, 62H10}}
\renewcommand{\subjclassname}{\textup{2010} Mathematics Subject
     Classification}

\keywords{Random matrices, Laguerre ensemble, Jacobi ensemble, Ginibre ensemble, fixed-trace matrix ensembles, random simplex, random parallelotope,
determinant, volume, mod-$\phi$ convergence, stable laws, central limit theorem, large deviation principle, precise deviations, Berry-Esseen bounds, local limit theorem}

\begin{abstract}
The aim of this paper is to give fine asymptotics for random variables with moments of
Gamma type. Among the examples we consider are random determinants of Laguerre and Jacobi beta ensembles with varying 
dimensions (the number of observed variables and the number of measurements vary and may be different).
In addition to the Dyson threefold way of classical random matrix models (GOE, GUE, GSE), we study random determinants of random matrices of the so-called tenfold way,  
including the Bogoliubov-de Gennes and chiral ensembles from mesoscopic physics. We show that fixed-trace matrix ensembles
can be analysed as well.
Finally, we add fine asymptotics for the $p(n)$-dimensional volume of the simplex with $p(n)+1$ points in $\re^n$ distributed according to special distributions, which
is strongly correlated to Gram matrix ensembles.
We use the framework of mod-$\varphi$ convergence to obtain extended limit theorems, Berry-Esseen bounds, precise moderate deviations, large and moderate
deviation principles as well as local limit theorems. The work is especially based on the recent work of Dal Borgo, Hovhannisyan and Rouault \cite{Borgoetal:2017}.
\end{abstract}

\maketitle

\section{Introduction}

\noindent
{\it Moments of Gamma type:}
In the excellent survey \cite{Janson:2010} (see also \cite{further}) a positive random variable $X$ is defined to have {\it moments of Gamma type} if, for $s$ in some interval,
$$
\expv{X^s} = C D^s \frac{\prod_{j=1}^J \Gamma(a_j s + b_j)}{\prod_{k=1}^K \Gamma(a'_k s + b'_k)}
$$

for some integers $J,K \geq 0$ and some real constants $C, D >0$, $a_j, b_j, a'_k, b'_k$.
In \cite{Janson:2010} and \cite{further} a rich class of examples was presented. This includes many standard distributions, among them the Gamma distribution, 
the Beta distribution, stable distributions, the Mittag-Leffler distribution,
extreme value distributions, the Fej\'er distribution, and many more distributions, see \cite{Janson:2010}. Examples from the point of view of stochastic models can be found in \cite{further}, where blocks in a Stirling permutation, distances in a sphere, preferential attachment random graphs, the maximum of i.i.d. exponentials, the largest values of i.i.d. exponentials as well as discriminants and Selberg's integral formula in random matrix theory are studied. 

For our purposes we will choose $J=K=p$, which might
depend on a parameter, say $p(n)$. Moreover, we choose $b_j = b'_j = \alpha(j+l)$ for some $l$ which might depend on $n$ and $p(n)$
and a constant $\alpha$, and we will choose $a'_j=0$.

Although there is a rich family of examples of positive random variables with moments of Gamma type, we will mainly be motivated by determinants of random matrices as well as by volumes of random parallelotopes and random simplices. 

\medskip

\noindent
{\it Mod-Gaussian convergence:}
Mod-Gaussian convergence was introduced in \cite{JacodKowalskiN:2011} and was mainly inspired by theorems and conjectures
in random matrix theory and number theory concerning moments of values of characteristic polynomials or zeta functions.
One of the canonical motivating examples is due to \cite{KeatingSnaith:2000}. The authors proved that if $(Y_n)_n$ is a sequence of complex random variables with $Y_n$ distributed like $\det (I - X_n)$ for some random variable $X_n$ taking values in the unitary group
$U(n)$ and uniformly distributed on $U(n)$, i.e. distributed according to the Haar measure (called the circular unitary ensemble - CUE), then for any complex number $z$ with
${\operatorname{Re}}(z) > -1$ we have
\begin{equation} \label{Mellinfirst}
\E \bigl( |Y_n|^z \bigr) = \prod_{k=1}^n \frac{\Gamma (k) \Gamma(k+z)}{(\Gamma (k + \frac z2))^2},
\end{equation}
which is of Gamma type. A consequence of \eqref{Mellinfirst} is that the value distribution of the real and the imaginary part of $\log Y_n / \sqrt{ \frac 12 \log n}$ converge independently to the standard Gaussian law. Here one has to clarify the choice of the branch of logarithm, see \cite[par. after(7)]{KeatingSnaith:2000}. 
The means to prove this central limit theorem is the method of cumulants, showing that
the $k$'th cumulant is converging to zero for all $k \geq 3$. Bounding the cumulants, finer asymptotics like Cram\'er-type moderate deviations, Berry-Esseen bounds and moderate deviation principles have been considered in \cite{DoeringEichelsbacher:2010}. 
An important contribution of \cite[see (15)]{KeatingSnaith:2000} is that the authors obtained that for any complex number $z$ with ${\operatorname{Re}}(z) > -1$
$$
\lim_{n \to \infty} \frac{1}{n^{z^2}} \E \bigl( |Y_n|^{2z} \bigr) = \lim_{n \to \infty} 
\frac{1}{e^{z^2 \log n}} \E \bigl( e^{ 2 z \log |Y_n|} \bigr) = \frac{G(1+z)^2}{G(1 + 2 z)},
$$
where $G$ is the Barnes (double gamma) function, see Appendix. This is {\it mod-Gaussian convergence} in a set $D \subset \cn$ that contains 0, with parameters $t_n= \frac 12 \log n$ and limiting function $\frac{G( 1 + \frac z2)}{G(1+z)}$, see Definition \ref{moddef}. This renormalised convergence of the moment generating function was not standard in probability theory before
\cite{KeatingSnaith:2000} and  \cite{JacodKowalskiN:2011}. In \cite{KeatingSnaith:2000}, the authors also considered the circular orthogonal and the circular symplectic ensembles (COE, CSE). For the circular ensembles in \cite[Section 7.5]{modbook} and \cite[Section 3.5]{mod2}, more information encoded in mod-Gaussian convergence is discovered. 

The proof of \eqref{Mellinfirst} is nowadays considered standard. In the CUE average, one starts with the joint distribution of the eigenphases
due to Weyl (\cite{Weyl:1946}, see also \cite{Mehta:book}) and apply Selberg's integral formulas, see Chapter 17 in \cite{Mehta:book}.
For many other random matrix ensembles the joint distribution of eigenvalues or eigenphases can be calculated 
explicitly as well, and Selberg-type formulas are available, see \cite{Mehta:book} and \cite{Hiai/Petz:Semicircle}.
\medskip

\noindent
{\it Random determinants:}
In general, the distribution of random determinants is an important functional in random matrix theory.  Moreover, random determinants are constantly used in random geometry to compute volumes of parallelotopes (\cite{Mathai:1999}) and in multivariate statistics to build tests.

In a series of papers, moments of the determinants were computed, see \cite{Prekopa:1967} and \cite{Dembo:1989} and references therein. In \cite{TaoVu:2006}, it was proved for Bernoulli random matrices, that with probability tending to one as $n$ tends to infinity
\begin{equation} \label{LLNiidAn}
\sqrt{n!} \exp(-c \sqrt{n \log n}) \leq | \det A_n | \leq \sqrt{n!} \, \, \omega(n)
\end{equation}
for any function $\omega(n)$ tending to infinity with $n$. This shows that almost surely, $\log | \det A_n|$ is $( \frac 12 + o(1)) n \, \log n$. \cite{Goodman:1963} considered the random Gaussian case, where the entries of $A_n$ are iid standard real Gaussian variables. Here, the square of the determinant can be expressed as a product of independent chi-square variables and it was proved that
\begin{equation} \label{CLTiidAn}
\frac{ \log (|\det A_n|) - \frac 12 \log n! + \frac 12 \log n }{ \sqrt{ \frac 12 \log n}} \to N(0,1)_{\re},
\end{equation}
where $N(0,1)_{\re}$ denotes the real standard Gaussian (convergence in distribution). A similar analysis also works for complex Gaussian matrices.
\cite{Girko:1979} stated that \eqref{CLTiidAn} holds for real iid matrices under the assumption that the fourth moment of the atom variables is 3. In \cite{Girko:1998} the author claimed the same result when the atom variables have bounded $(4 + \delta)$-th moment. 
Recently, \cite{NguyenVu:2011} gave a proof for \eqref{CLTiidAn} under an exponential decay assumption on the entries. They also present an estimate for the rate of convergence, which is that
the Kolmogorov distance of the distribution of the left hand side of \eqref{CLTiidAn} and the standard real Gaussian can be bounded by $(\log n)^{- \frac 13 +o(1)}$. In the Gaussian case this was improved in \cite{DoeringEichelsbacher:2012b} to $(\log n)^{-\frac 12}$. 
The analogue of \eqref{LLNiidAn}
for Hermitian random matrices was first proved in \cite[Theorem 31]{Tao/Vu:2009} as a consequence of the famous Four Moment Theorem. Even in the Gaussian case, it is not simple to prove an analogue of the central limit theorem (CLT) \eqref{CLTiidAn}. The observations in \cite{Goodman:1963} do not apply due to the dependence between the rows. In \cite{MehtaNormand:1998} and in \cite{DelannayLeCaer:2000}, the authors computed the moment generating function of the log-determinant for the Gaussian unitary and Gaussian orthogonal ensembles respectively,  and discussed the central limit theorem via the method of cumulants (see \cite[equation (40) and Appendix D]{DelannayLeCaer:2000}):
consider a Hermitian $n \times n$ matrix $X_n$ in which the atom distribution $\zeta_{ij}$ are given by the complex Gaussian $N(0,1)_{\cn}$ for $i < j$ and the real Gaussian $N(0,1)_{\re}$ for $i=j$ (which is called the Gaussian Unitary Ensemble (GUE), see Section 2.4). The calculations in  \cite{DelannayLeCaer:2000} 
imply a central limit theorem:
\begin{equation} \label{CLTiidMnC}
\frac{ \log (|\det X_n|) - \frac 12 \log n! + \frac 14 \log n }{ \sqrt{ \frac 12 \log n}} \to N(0,1)_{\re},
\end{equation}
Recently, \cite{TaoVu:2011} presented a different approach to prove this result approximating the $\log$-determinant as a sum of weakly dependent
terms, based on analysing a tridiagonal form of the GUE due to Trotter \cite{Trotter:1984}. They have to apply stochastic calculus and a martingale central limit theorem to get their result. 

In \cite{DoeringEichelsbacher:2012b}, Cram\'er-type moderate deviations and Berry-Esseen bounds for the log-determinant of the GUE and GOE ensembles were considered, establishing good bounds
for all cumulants. 

In \cite[Section 2.2]{Rouault:2007}, moment formulas comparable to \eqref{Mellinfirst} 
were collected for the product of eigenvalues (determinants) for certain families of random matrices
like $\beta$-Laguerre-, $\beta$-Jacobi- and uniform Gram ensembles. 
Laguerre ensembles are presenting sample covariance matrices in mathematical statistics, Jacobi ensembles are connected to correlation coefficients.
The role of Gram ensembles is the following.
\medskip

\noindent
{\it Gram matrices in random geometry:}
The uniform Gram matrix is connected to the volume or $p(n)$-content of a {\it parallelotope}. Here the model is to consider
random vectors $(b_i)_i$ with the same distribution $\nu_n$ in $\re^n$. If one takes the Gaussian law $\nu_n = N(0,I_n)$,
then the distribution of $B^t \, B$ with $B$ the $p(n) \times n$ matrix with $p(n)$ column vectors $b_1, \ldots, b_{p(n)}$ of $\re^n$, 
is the so-called {\it Wishart ensemble}. This ensemble is also called the $1$-Laguerre ensemble because of the connection with orthogonal polynomials. If $\nu_n$ is the uniform distribution on the unit sphere in $\re^n$, then the matrix ensemble is called the uniform Gram ensemble. The joint density of the non-diagonal entries
can be presented explicitly, see \cite[(2.1)]{Rouault:2007}. See also \cite{Mathai:1997} and \cite{Mathai:1999} for
Gram matrices in random geometry with further distributions $\nu_n$.

Formulas for moments of certain random matrix ensembles in \cite{Rouault:2007} imply precise formulas for the distribution
of the corresponding determinants, see \cite[Proposition 2.2, 2.3, 2.4]{Rouault:2007}. These results correspond to Bartlett-type results (see Proposition 2.1 in \cite{Rouault:2007}). Among others, asymptotic log-normality was proved for these ensembles, see \cite[Theorem 3.2, 3.5, 3.8]{Rouault:2007}. 
\medskip

\noindent
{\it Second-order analysis:}
Recently, in \cite{Borgoetal:2017}, second-order refinements of central limit theorems for log-determinants of certain random matrix ensembles 
were considered. The authors provide an asymptotic expansion of the Laplace transforms of the log-determinants and apply
the framework of mod-Gaussian convergence.  Their results include mod-Gaussian convergence,
extended central limit theorems, precise moderate deviations, Berry-Esseen bounds as well as local limit theorems. 
Moreover, they were able to apply the techniques to random characteristic polynomials evaluated at 1 for circular and
circular Jacobi beta ensembles.
\medskip

\noindent
The aim of this paper is to study precise asymptotics (second-order analysis) for determinants of $\beta$-Laguerre-ensembles for $p(n) \times p(n)$ random matrices $A^{\dagger} A$, where $A$ is a certain $p(n) \times n$ matrix and $A^{\dagger}$ denote the transpose, the Hermitian conjugate or the dual of $A$ when $A$ is real, complex and quaternion respectively. 
Since the interpretation of $p(n)$ is the number
of variables and $n$ is the number of observations, we are mainly interested in the case $p(n) \not= n$. The case $p(n)=n$ was studied
in \cite{Borgoetal:2017}. We will observe that mod-Gaussian convergence can be proved for $n-p(n)$ being small,
or being fixed, or is allowed to grow to infinity at a certain rate. Moreover we observe a mod-stable convergence
on the imaginary line $i \re$ if the number of variables $p$ is fixed. We will present these results in Theorem \ref{cumulant} and Theorem \ref{theoremmodL}.
Corresponding pieces of information encoded in the mod-convergence
will be worked out, among them precise moderate deviations, moderate and large deviation principles, Berry-Esseen
bounds and local limit theorems for log-volumes or log-determinants. An important observation of our paper is that
the asymptotic behaviour of the determinants of $\beta$-Laguerre ensembles for varying dimensions {\it is sufficient} to be able to study the asymptotics of determinants
of $\beta$-Jacobi ensembles, of Ginibre ensembles, of 7 further matrix ensembles 
within the tenfold way of mesoscopic physics, and of the determinant of certain Gram matrices with respect to certain distributions in $\re^n$, representing the volume of parallelotopes.

\noindent
The structure of the paper goes as follows. In the next section we will collect all random matrix
ensembles we are interested in. In Section 3 we recall the tool of mod-$\phi$ convergence and the limiting theorems which are implied by this notion.
In Section 4 we present a key asymptotic, following Theorem 5.1 in \cite{Borgoetal:2017}. In Section 5, the key asymptotic of Section 4 is mainly applied to the case of a $\beta$-Laguerre ensemble. The main result is Theorem \ref{cumulant}. 
In Section 6, we prove mod-$\phi$ convergence, extended limit theorems, precise deviations, large and moderate deviations and Berry-Esseen bounds for log determinants of our collection of random matrix ensembles. Section 7 contains the results for
random parallelotopes and random simplices in high dimensions.

\section{Classes of models}
In this section we will collect all classes of examples we are interested in.

\subsection{Wishart matrices / Laguerre ensembles}
As our first motivating example let us consider the following prototype of a random matrix ensemble from mathematical statistics. 
The study of sample covariance matrices is fundamental in multivariate statistics. Typically, one thinks of $p(n)$ variables $y_k$ with each variable
measured or observed $n$ times. One is interested in analysing the covariance matrix $A^t \, A$, with $A$ being the $n \times p(n)$ matrix with $p(n) \leq n$, and entries $y_k^{(j)}$ for $j=1, \ldots, n$ and $k=1,\ldots, p(n)$. If $A$ is chosen to be a Gaussian matrix
over $\re$, $\cn$ or $\qn$, the distribution of the $p(n) \times p(n)$ random matrix $A^{\dagger} A$ is called {\it Laguerre}
real, complex or symplectic ensemble. Here $A^{\dagger}$ denotes the transpose, the Hermitian conjugate or the dual of $A$
accordingly, when $A$ is real, complex or quaternion. The eigenvalues $(\lambda_1, \ldots, \lambda_{p(n)})$ are real and non-negative and it is a well known fact that the joint density function on the set $(0, \infty)^{p(n)}$ is 
$$
\frac{1}{Z_{n,p(n),\beta}} \prod_{1 \leq j < k \leq p(n)} |\lambda_j - \lambda_k|^{\beta} \prod_{k=1}^{p(n)} \bigl( \lambda_k^{\frac{\beta}{2} (n - p(n) +1) - 1} e^{-\frac{\lambda_k}{2}} \bigr)
$$
for $\beta =1,2,4$ respectively, see for example \cite[Proposition 3.2.2]{Forrester:book}. Tridiagonal models for the $\beta$-Laguerre Ensembles have been constructed in \cite{DumitriuEdelman:2002}. Using Selberg integration
from \cite[(17.6.5)]{Mehta:book}, we obtain
$$
Z_{n,p(n),\beta} = 2^{\frac{\beta}{2} n p(n) - p(n)} \prod_{k=1}^{p(n)} \frac{\Gamma(1+\frac{\beta}{2} k) \Gamma( \frac{\beta}{2}(n -p(n)) + \frac{\beta}{2} k)}{\Gamma(1 + \frac{\beta}{2})}.
$$
Using this Selberg formula, one obtains directly that
\begin{eqnarray*}
\E \biggl[ \biggl( \det W_{n,p(n)}^{L, \beta} \biggr)^z \biggr] & = &2^{p(n) z} \prod_{k=1}^{p(n)} \frac{\Gamma\bigl(
\frac{\beta}{2} (n-p(n)+k) +z \bigr)}{\Gamma\bigl(\frac{\beta}{2} (n -p(n) +k)\bigr)} \\
&=& 2^{p(n) z} \prod_{k=1+n-p(n)}^{n} 
\frac{\Gamma\bigl(\frac{\beta}{2} k +z \bigr)}{\Gamma\bigl(\frac{\beta}{2} k\bigr)},
\end{eqnarray*}
where $W_{n,p(n)}^{L, \beta}$ denotes the $\beta$-Laguerre distributed random matrix of dimension  $p(n) \times p(n)$. This object is called the {\it Mellin transform}
of the determinant, which is defined for any $z \in \cn$ with ${\operatorname{Re}}(z) > -\frac{\beta}{2}$.

\noindent
We introduce the notion
\begin{equation} \label{L}
L(p,l,\alpha;z) = \log \biggl( \prod_{k=1}^p \frac{\Gamma(\alpha(k+l) + z)}{\Gamma( \alpha(k+l))} \biggr) ,
\end{equation}
with $p, l \geq 1$ and $z \in \cn$ with $\operatorname{Re}(z) > - \alpha$ and $\alpha \in \re$ and obtain
\begin{equation} \label{MellinL}
\log \E \biggl[ \biggl( \det W_{n,p(n)}^{L, \beta} \biggr)^z \biggr] = z p(n) \log 2 + L(p(n), n-p(n), \beta/2;z).
\end{equation}
In the case $p(n)=n$ of $n \times n$ matrices, asymptotic expansions of \eqref{MellinL} have been considered in \cite[Theorem 5.1]{Borgoetal:2017}. 
From a point of view of mathematical statistics, the number of variables $p(n)$ and the number
of measurements or observations $n$ are typically different. One aim of our paper is to study arbitrary Wishart matrices. We will consider $n-p(n)$ converging to
zero, or converging to a constant $c>0$, or $n-p(n)$ is growing at a certain rate with $n$. Moreover we will analyse the case of a fixed number of variables $p$. This
case will behave differently, see Theorem \ref{cumulant} and Theorem \ref{theoremmodL}.
A good overview of results for $\beta$-Laguerre ensembles is \cite{Bai:book} and \cite{Forrester:book}. 
In \cite{Jonsson:1982} one can find a very early result: the author proved a central limit theorem for $\det W_{n,n}^{L, 1}$, which is
$$
\frac{\log \det W_{n,n}^{L, 1} + n + \frac 12 \log n}{\sqrt{2 \log n}} \to N(0,1),
$$
where $N(0,1)$ denotes the standard Gaussian distribution. We will add the second order analysis.

\subsection{Jacobi ensembles / correlation coefficients}
Let $A_1$ and $A_2$ be $n_1 \times p(n)$ and $n_2 \times p(n)$ Gaussian matrices over $\re$, $\cn$ or $\qn$ with $p(n) \leq \min(n_1,n_2)$. The distribution of the matrix
$$
A_1^{\dagger} A_1 \bigl( A_1^{\dagger} A_1 + A_2^{\dagger} A_2 \bigr)^{-1}
$$
is called Jacobi ensemble. The model can be generalised to all $\beta >0$ as in the previous matrix models, see \cite{DumitriuEdelman:2002} and \cite{KillipNenciu:2004} for the corresponding tridiagonal models.
The joint density of the eigenvalues on the set $(0,1)^{p(n)}$ is given by
$$
\frac{1}{Z_{p(n),n_1, n_2, \beta}^J} \prod_{1 \leq j < k \leq p(n)} |\lambda_j - \lambda_k|^{\beta} \prod_{k=1}^{p(n)} \lambda_k^{\frac{\beta}{2} (n_1 - p(n) +1) - 1} 
\, (1 - \lambda_k)^{\frac{\beta}{2} (n_2 - p(n) +1) - 1}.
$$
One use of this joint density relates to {\it correlation coefficients} in multivariate statistics, see \cite[Section 3.6.1]{Forrester:book} for details.

Using Selberg integration from \cite[(17.1.3)]{Mehta:book}, we obtain
$$
Z_{p(n),n_1,n_2,\beta}^J = \prod_{k=1}^{p(n)} \frac{\Gamma(1+\frac{\beta}{2} k) \Gamma( \frac{\beta}{2}(n_2-p(n)) + \frac{\beta}{2} k) \Gamma(\frac{\beta}{2}(n_1-p(n)) + \frac{\beta}{2} k)}{\Gamma(1 + \frac{\beta}{2}) \Gamma (\frac{\beta}{2}(n_1 + n_2) + \frac{\beta}{2} (k-p(n)) )}.
$$
Using this Selberg formula, one obtains for the corresponding Mellin transform
$$
\E \biggl[ \biggl( \det W_{p(n),n_1,n_2}^{J, \beta} \biggr)^z \biggr] = \prod_{k=1}^{p(n)} \frac{\Gamma\bigl(
\frac{\beta}{2} (n_1-p(n)+k) +z \bigr) \Gamma \bigl( \frac{\beta}{2} (n_1+n_2-p(n)+k) \bigr) }{\Gamma\bigl(\frac{\beta}{2} (n_1 -p(n) +k)\bigr) \Gamma\bigl(\frac{\beta}{2} (n_1 +n_2 -p(n) +k) +z\bigr)} ,
$$
where $W_{p(n),n_1,n_2}^{J, \beta}$ denotes the $\beta$-Jacobi distributed random matrix of dimension  $p(n) \times p(n)$.
Hence with \eqref{L} we have

\begin{equation} \label{MellinJ}
\log \E \biggl[ \biggl( \det W_{p(n),n_1,n_2}^{J, \beta} \biggr)^z \biggr]  =L(p(n), n_1-p(n), \beta/2;z) - L(p(n), n_1+n_2-p(n), \beta/2;z).
\end{equation}
Asymptotic expansions in the case $p(n)=n_1 = \lfloor n \tau_1 \rfloor$ and $n_2 = \lfloor n \tau_1\rfloor$, for some $\tau_1, \tau_2 > 0$, were considered in \cite[Theorem 4.5, 4.7, 4.9, 4.11 and 4.13]{Borgoetal:2017}. 

The aim of our paper is to study the asymptotic behaviour
under less restrictive assumptions. The main results are given in Theorem \ref{theoremmodJ} and \ref{modJ2}.

\subsection{Ginibre ensembles}
We now consider an arbitrary $n \times n$ matrix $A$ whose entries are independent real or complex Gaussian random variables with mean zero and variance one.
Using the Selberg identity of the Laguerre ensemble, one obtains

\begin{equation} \label{MellinG}
\log \E \biggl[ \biggl( \det W_{n}^{G, \beta} \biggr)^z \biggr] = \frac{nz}{2} \log \bigl( \frac{2}{\beta} \bigr) + \log \prod_{k=1}^{n} 
\frac{\Gamma \bigl( \frac{\beta}{2} k + \frac{z}{2} \bigr)}{\Gamma \bigl( \frac{\beta}{2} k \bigr)} = \frac{nz}{2} \log \bigl( \frac{2}{\beta} \bigr)+ L(n,0,\beta/2;z),
\end{equation}
see for example \cite[(33), Section 3]{DoeringEichelsbacher:2012b}. For a central limit theorem with a Berry-Esseen bound see \cite{NguyenVu:2011}. For certain
refinements in the Gaussian case see \cite[Section 3]{DoeringEichelsbacher:2012b}. Second order asymptotics are included in \cite{Borgoetal:2017}, see Section 6.3.

\subsection{The threefold way due to Dyson and fixed-trace ensembles}
The most classical ensembles are the Hermite ensembles. Let $A$ be an $n \times n$ Gaussian matrix over $\re, \cn$ or $\qn$. The distribution of $\frac{A + A^{\dagger}}{2}$ is called the Gaussion orthogonal (GOE), unitary (GUE), and symplectic ensemble (GSE) respectively. The joint density function of the eigenvalues is given by
$$
\frac{1}{Z_n^H(\beta)} \prod_{1 \leq j < k \leq n} |\lambda_j - \lambda_k|^{\beta} \prod_{k=1}^n \exp \big( - \frac{\lambda_k^2}{2} \bigr),
$$
for $\beta=1,2,4$. Using Selberg integration from \cite[(17.6.7)]{Mehta:book}, we obtain
$$
Z_n^H(\beta) = (2 \pi)^{n/2} \prod_{k=1}^n \frac{\Gamma \bigl( 1 + k \frac{\beta}{2} \bigr) }{\Gamma \bigl(1 + \frac{\beta}{2} \bigr)}.
$$
Using a tridiagonal reduction algorithm, in \cite{DumitriuEdelman:2002}, a matrix model for other choices of $\beta >0$ was proved. But the lack of Selberg-integrals
makes it much harder to investigate these models. In \cite{Borgoetal:2017}, the study was restricted to the GUE model. We shall denote by $W_n^H$ a GUE random matrix. Here one can use the following
formula of the Mellin transform of the absolute value of the determinant, computed in \cite{MehtaNormand:1998}:
\begin{equation}  \label{MellinGUE}
\E \biggl[ \big| \det W_n^{H} \big|^z \biggr] = 2^{nz/2} \prod_{k=1}^{n} \frac{\Gamma\bigl(
\frac{z+1}{2} + \lfloor \frac k2 \rfloor \bigr)}{ \Gamma \bigl( \frac{1}{2} + \lfloor \frac k2 \rfloor\bigr) },
\end{equation}
which is well defined for any $z \in \cn$ with ${\operatorname{Re}}(z) >-1$.
The three ensembles GOE, GUE, GSE are also called {\it Wigner-Dyson ensembles A, AI and AII}. The background is the following.
The classical period of random matrix theory began in the late 1950s and early 1960s, when Wigner and Dyson proposed to model the discrete part of the
spectrum of the Hamiltonian of a complicated quantum system by the
spectrum of a suitable random matrix ensemble. To be a good model,
this ensemble had to share certain symmetries with the quantum
system. In his famous paper \cite{Dyson:1962}, Dyson adopted a set of
symmetry assumptions, which was motivated by the framework of
classical quantum mechanics, and classified those spaces of
matrices which are compatible with the given symmetries. He ended
up with the threefold way of hermitian matrices with real,
complex, and quaternion entries, i.e., precisely with those spaces
on which the familiar Gaussian orthogonal, unitary, and symplectic
ensembles (GOE, GUE, GSE) of classical random matrix theory are
supported. Dyson's threefold way is established in geometrical terms, without reference to
probability measures on the matrix spaces in question. In structural terms, the space of hermitian, real symmetric and quaternion real matrices can be viewed
as tangent spaces to, or infinitesimal versions of Riemannian symmetric spaces of type A, AI and AII respectively. 
A good overview of results for the Dyson ensembles is \cite{Zeitounibook}.

\noindent
{\it Fixed-trace ensembles} of random matrices were first considered by Rosenzweig and Bronk, see \cite[Chapter 19]{Mehta:book}. Universal limits for the eigenvalue correlation functions in the bulk of the spectrum in trace-fixed matrix ensembles are considered in \cite{GoetzeGordin:2008, GoetzeGordinLewina:2007}. 
Let us consider the GUE model. The maximum of $\prod_{k=1}^n |\lambda_k|$, subject to $\sum_{k=1}^n \lambda_k^2 =1$, is $n^{-n/2}$. Hence we consider the rescaled determinant
$$
n^{n/2} | \det W_n^{H,ft} |,
$$
where $\det W_n^{H,ft}$ is the product of the eigenvalues of the fixed-trace GUE ensemble. Here one can use the following
formula of the Mellin transform of $n^{n/2} | \det W_n^{H,ft} |$, computed in \cite{LeCaerDelannay:2003}: consider \cite[(20)]{LeCaerDelannay:2003}
with $\beta =2$ at the value $z+1$, see also \cite[page 256/257]{DoeringEichelsbacher:2012b}:
\begin{equation} \label{MellinGUEft}
\E \biggl[ \big| n^{n/2}  \det W_n^{H,ft} |^z \biggr] = \prod_{k=1}^{n} \frac{\Gamma\bigl(
\frac{z+1}{2} + \lfloor \frac k2 \rfloor \bigr)}{ \Gamma \bigl( \frac{1}{2} + \lfloor \frac k2 \rfloor\bigr) } \,
\biggl( \prod_{k=1}^{n} \frac{\Gamma\bigl(
\frac{z}{2} + \frac{n}{2} +\frac{k-1}{n} \bigr)}{ \Gamma \bigl( \frac{n}{2} + \frac{k-1}{n} \bigr) } \biggr)^{-1}. 
\end{equation}
In \cite{Borgoetal:2017}, asymptotic expansions of \eqref{MellinGUE} have been considered. We will add a precise asymptotic description for \eqref{MellinGUEft}.
This includes a central limit theorem, presice moderate deviations and Berry-Esseen bounds. All these results are new and presented in Section 6.5.

\subsection{The tenfold way}
In the 1990s, in the field of condensed matter physics, it was observed that random matrix models for so-called mesoscopic normal-superconducting hybrid structures
must be taken from the infinitesimal versions of further symmetric spaces, different from the path of the threefold way of Dyson. In \cite{Heinzner/Huckleberry/Zirnbauer:2005},
a classification similar to Dysons's way was developed, based on less restrictive
assumptions, thus taking care of the needs of modern mesoscopic
physics. Their list is in one-to-one correspondence with the
infinite families of Riemannian symmetric spaces as classified by
Cartan. In \cite{ES}, the corresponding random matrix
theory was introduced, with a special emphasis on large deviation principles. The seven new ensembles are listed in \cite{Forrester:book} 
in Section 3.1 and Section 3.3. as well as in \cite{ES}. All these new ensembles can be described by a certain matrix configuration (see the table on page 104 in \cite{Forrester:book}).
For any new ensemble, the joint density of the positive eigenvalues is of the form
\begin{equation} \label{density7}
\frac{1}{Z_{n,p(n), \beta, \mu}} \prod_{1 \leq j < k \leq p(n)} |\lambda_j^2 - \lambda_k^2|^{\beta} \, \prod_{k=1}^{p(n)} \lambda_k^{\beta \mu} \exp \bigl( - \beta \lambda_k^2 /2 \bigr)
\end{equation}
with some $\mu \in \re$.

\subsubsection{Chiral ensembles}
The three Chiral ensembles are defined in \cite[Definition 3.1.2]{Forrester:book}. Here we have to choose $\beta \in \{1,2,4\}$ and $\mu_{\operatorname{chiral}} = n-p(n) +1 - \frac{1}{\beta}$.

\subsubsection{Bogoliubov- de Gennes ensembles}
The other four ensembles are collected in Subsection 3.3.2 of \cite{Forrester:book}. Here, for the first ensemble, we have to choose $n=p(n)$, $\beta=1$ and $\mu =1$.
For the second (only if $n$ is even) $p(n) = n/2$,  $\beta=2$ and $\mu =0$. The third is given by  $p(n) = n/2$,  $\beta=4$ and $\mu =1/4$ if $n$ is even, and by
$p(n) = (n-1)/2$,  $\beta=2$ and $\mu =5/4$ if $n$ is odd. Finally, the fourth ensemble is given by  $p(n) = n$,  $\beta=2$ and $\mu =1$.
\medskip

\noindent
We will discuss the precise asymptotic behaviour of the seven ensembles. Therefore we have to calculate $Z_{n,p(n), \beta, \mu}$. We start at \cite[(17.6.6)]{Mehta:book}
with $\gamma=\beta/2$:
\begin{eqnarray*}
\int_{-\infty}^{\infty} \cdots \int_{-\infty}^{\infty}  \prod_{1 \leq j < k \leq p(n)} |\lambda_j^2 - \lambda_k^2|^{\beta} \, \prod_{k=1}^{p(n)} |\lambda_k|^{ 2 v -1} 
\exp \bigl( - \beta \lambda_k^2 /2 \bigr) \, d \lambda_1 \cdots d \lambda_k \\ 
& & \hspace{-10cm} = \bigl( \frac{2}{\beta} \bigr)^{v p(n) + \frac{\beta}{2} p(n)(p(n)-1)} 2^{-p(n)} \prod_{k=1}^{p(n)} \frac{ \Gamma(1 +  \frac{\beta}{2} k) \, \Gamma( v + \frac{\beta}{2}(k-1))}{\Gamma(1 + \frac{\beta}{2} )}.
\end{eqnarray*}
Note that the integrand is an even function in $\lambda_k$, so with $2v-1 = \beta \mu$, we obtain that
$$
Z_{n,p(n), \beta, \mu} = \bigl( \frac{2}{\beta} \bigr)^{ \bigl( \frac{\beta}{2} \mu + \frac 12 \bigr) p(n) + \frac{\beta}{2} p(n)(p(n)-1)} 
\prod_{k=1}^{p(n)} \frac{ \Gamma(1 +  \frac{\beta}{2} k) \, \Gamma( \bigl( \frac{\beta}{2} \mu + \frac 12 \bigr) + \frac{\beta}{2}(k-1))}{\Gamma(1 + \frac{\beta}{2} )}.
$$
Let us denote by $W_{n,p(n)}^{\beta, \mu}$ a random matrix with joint eigenvalue distribution \eqref{density7}. Using this Selberg formula, we obtain the corresponding Mellin transform
\begin{equation} \label{Mellin7}
\E \biggl[ \biggl( \det W_{n,p(n)}^{\beta, \mu} \biggr)^z \biggr] = \prod_{k=1}^{p(n)} 
\frac{\Gamma \bigl( \frac{\beta}{2} \mu + \frac 12 + \frac{z+1}{2} + \frac{\beta}{2}(k-1) \bigr)}{\Gamma( \bigl( \frac{\beta}{2} \mu + \frac 12 \bigr) + \frac{\beta}{2}(k-1))}.
\end{equation}
Hence for the three chiral ensembles ($\beta \in \{1,2,4\}$) we have

\begin{equation} \label{Mellin7ch}
\E \biggl[ \biggl( \det W_{n,p(n)}^{\beta, \mu_{\operatorname{chiral}}} \biggr)^z \biggr] = \prod_{k=1}^{p(n)} 
\frac{\Gamma \bigl( \frac{\beta}{2} (n-p(n) +k) + \frac{z+1}{2} \bigr) }{\Gamma \bigl( \frac{\beta}{2} (n-p(n) +k) \bigr)} = L(p(n), n-p(n), \beta/2; \frac{z+1}{2}).
\end{equation}
For the Bogoliubov- de Gennes ensembles, since $\mu$ is constant, the behaviour is different. Let us consider the case $n=p(n)$, $\beta=1$ and $\mu =1$.
Here we have
\begin{equation} \label{Mellin7BdG}
\E \biggl[ \biggl( \det W_{n,n}^{1,1} \biggr)^z \biggr] = \prod_{k=1}^{n} 
\frac{\Gamma \bigl( \frac{1}{2} (k+1)  + \frac{z+1}{2} \bigr) }{\Gamma \bigl(\frac{1}{2} (k+1) \bigr) } = L(n,1,1/2; \frac{z+1}{2}).
\end{equation}
Summarising, the asymptotic behaviour of the log-determinant of a Laguerre ensemble will lead to the asymptotic behaviour
of the product of non-negative eigenvalues in the seven new ensembles. All our results for the sum of the logarithms of the positive eigenvalues are new and
will be presented in Section 6.4.

\subsection{Gram ensembles and random simplices}
If for $p(n) \leq n$, $X_1, \ldots, X_{p(n)+1}$ are independent random points in $\re^n$ which are distributed according to a multivariate Gaussian distribution
with density $f(|x|) = (2 \pi)^{-n/2} \exp( - \frac 12 |x|^2)$, $x \in \re^n$, we denote by $VP_{n,p(n)}$ the $p(n)$-dimensional volume of the {\it parallelotope} spanned by the points $X_1, \ldots, X_{p(n)}$. This is the determinant of the corresponding Gram matrix. It is known, see \cite{Mathai:1999}, that for all $m \geq 0$ the moments of order $2m$ of the volume fulfill
$$
\log \E \bigl( (VP_{n,p(n)})^{2m} \bigr) = m p(n) \log 2 + \log \prod_{k=1}^{p(n)} \frac{\Gamma\biggl(
\frac{1}{2} (n-p(n)+k) +m \biggr)}{\Gamma\biggl(\frac{1}{2} (n -p(n) +k)\biggr)}.
$$
The formula is a consequence of the so-called Blaschke-Petkantschin formula from integral geometry. With \eqref{L}, hence we will study the asymptotics of
\begin{equation} \label{MellinPa1}
\log \E \bigl( (VP_{n,p(n)})^{z} \bigr) =  \frac z2 p(n) \log 2 + L \bigl( p(n), n-p(n), 1/2; z/2 \bigr),
\end{equation}
which is exactly the same as studying the asymptotic behaviour of the log-determinant of a Laguerre ensemble in the case $\beta=1$ for $z/2$ instead of $z$, see \eqref{MellinL}. Interestingly enough, the application of the Blaschke-Petkantschin formula is an alternative proof of the moment identity \eqref{MellinL}, which
in random matrix theory is proved with the help of Selberg integrals. 

\begin{remark}
In the theory of random matrices it is quite natural to consider a matrix size $p(n)$ growing with $n$. 
In models of random geometry, the number of points $p(n)$ in $\re^n$ might not depend on $n$ and it might be a challenge to let $p(n)$ and $n$  tend to infinity simultaneously. Our results for $L \bigl( p(n), n-p(n), 1/2; z/2 \bigr)$ will imply this type of phenomena in high dimensions for free.
\end{remark}

In \cite{Mathai:2001}, the author studied the moments of order $2m$ of $VP_{n,p(n)}$ if the random points are distributed according to three other distributions, which
are called the Beta model, the Beta prime model and the spherical model. 
In the Beta model with parameter $\nu >0$, the i.i.d. points in the ball of radius 1 are distributed with respect to the density
$$
f(|x|) = \frac{1}{\pi^{n/2}} \frac{\Gamma \bigl( \frac{n + \nu}{2} \bigr)}{\Gamma \bigl( \frac{\nu}{2} \bigr)} ( 1 - |x|^2)^{(\nu-2)/2}, \quad x \in \re^n, |x| <1.
$$
Here for $z=2m$ we have
\begin{eqnarray} \label{MellinPa2}
\log \E \bigl( (VP_{n,p(n)})^{z} \bigr) &=&  p(n) \log \Gamma \bigl( \frac{n + \nu}{2} \bigr)  - p(n) \log \Gamma \bigl( \frac{n + \nu}{2}  + \frac z2\bigr)  \nonumber \\ &+&
 L \bigl( p(n), n-p(n), 1/2; z/2 \bigr).
\end{eqnarray}
In the Beta prime model with parameter $\nu >0$ the density is given by
$$
f(|x|) = \frac{1}{\pi^{n/2}} \frac{\Gamma \bigl( \frac{n + \nu}{2} \bigr)}{\Gamma \bigl( \frac{\nu}{2} \bigr)} ( 1 + |x|^2)^{-(\nu+n)/2}, \quad x \in \re^n
$$
and for $z=2m$ we have

\begin{equation} \label{MellinPa3}
\log \E \bigl( (VP_{n,p(n)})^{z} \bigr) =  p(n) \log \Gamma \bigl( \frac{\nu}{2} - \frac z2 \bigr)  - p(n) \log \Gamma \bigl( \frac{\nu}{2} \bigr)
 + L\bigl( p(n), n-p(n), 1/2; z/2 \bigr).
\end{equation}
Finally, in the spherical model the points are uniformly distributed on the sphere of radius 1 centred at the origin of $\re^n$. We have

\begin{equation} \label{MellinPa4}
\log \E \bigl( (VP_{n,p(n)})^{z} \bigr) =  p(n) \log \Gamma \bigl( \frac{n}{2} \bigr)  - p(n) \log   \Gamma \bigl( \frac{n+z}{2} \bigr) 
+ L \bigl( p(n), n-p(n), 1/2; z/2 \bigr)
\end{equation}
for any $z=2m$ with $m \in \na$, which is the Beta model with $\nu=0$. These identities are exactly given in \cite[Section 2.2.2, page 189]{Rouault:2007}. The second order analysis of the log-volume was already studied in \cite{Borgoetal:2017}, here in the case $p(n)=n$.  The authors established mod-Gaussian convergence in Theorem 4.5, and extended results in Theorem 4.9, 4.11 and 4.13. Our results for $p(n) \not=n$ are collected in Section 7.
\medskip

If we denote by $VS_{n,p(n)}$  the $p(n)$-dimensional volume of the {\it simplex} with vertices $X_1, \ldots, X_{p(n)+1}$, the moment formulas are very similar.
The following formulas were proved using the affine Blaschke-Petkantschin formula, see \cite{Miles:1971} and \cite{Chu:1993}. In the Gaussian model one obtains
\begin{equation} \label{MellinSim1}
\log \E \bigl( (p(n)! \, VS_{n,p(n)})^{z} \bigr) =  \frac z2 \log (p(n)+1) + \log \E \bigl( (VP_{n,p(n)})^{z} \bigr),
\end{equation}
where $\log \E \bigl( (VP_{n,p(n)})^{z} \bigr)$ is defined in \eqref{MellinPa1}.
\medskip

\noindent
In the Beta model we have
\begin{equation} \label{MellinSim2}
\log \E \bigl( (p(n)! \, VS_{n,p(n)})^{z} \bigr) = \log f(n,p(n), \nu,z) + \log \E \bigl( (VP_{n,p(n)})^{z} \bigr),
\end{equation}
where $\log \E \bigl( (VP_{n,p(n)})^{z} \bigr)$ is defined in \eqref{MellinPa2} and where 
$$
f(n,p(n), \nu,z) = \frac{\Gamma \biggl( \frac{n + \nu}{2} \biggr)}{\Gamma \biggl( \frac{n + \nu}{2} + \frac z2 \biggr)} \, 
\frac{ \Gamma \biggl( \frac{p(n)(n + \nu -2) + (n + \nu)}{2} + (p(n)+1) \frac z2 \biggr)}
{\Gamma \biggl( \frac{p(n)(n + \nu -2) + (n + \nu)}{2} + p(n) \frac z2 \biggr)}.
$$
\medskip

\noindent
In the Beta prime model we have
\begin{equation} \label{MellinSim3}
\log \E \bigl( (p(n)! \, VS_{n,p(n)})^{z} \bigr) = \log g(n,p(n), \nu,z) + \log \E \bigl( (VP_{n,p(n)})^{z} \bigr),
\end{equation}
where $\log \E \bigl( (VP_{n,p(n)})^{z} \bigr)$ is defined in \eqref{MellinPa3} and where
$$
g(n,p(n), \nu,z) = \frac{\Gamma \biggl( \frac{\nu}{2} - \frac z2\biggr)}{\Gamma \biggl( \frac{\nu}{2} \biggr)} \,
\frac{ \Gamma \biggl( \frac{(p(n)+1) \nu}{2} - p(n) \frac z2 \biggr)}{\Gamma \biggl( \frac{(p(n)+1)\nu}{2} - (p(n)+1) \frac z2 \biggr)}.
$$
\medskip

\noindent
Finally, in the spherical model we obtain
\begin{equation} \label{MellinSim4}
\log \E \bigl( (p(n)! \, VS_{n,p(n)})^{z} \bigr) = \log h(n,p(n), \nu,z) + \log \E \bigl( (VP_{n,p(n)})^{z} \bigr),
\end{equation}
where $\log \E \bigl( (VP_{n,p(n)})^{z} \bigr)$ is defined in \eqref{MellinPa4} and where
$$
h(n,p(n), \nu,z) = \frac{\Gamma \biggl( \frac{n}{2} \biggr)}{\Gamma \biggl( \frac{n}{2} + \frac z2\biggr)} \, 
\frac{ \Gamma \biggl( \frac{p(n)(n-2)+n}{2} + (p(n)+1) \frac z2 \biggr)}{\Gamma \biggl(  \frac{p(n)(n-2)+n}{2} + p(n) \frac z2 \biggr)},
$$
which is the same as the Beta model with $\nu=0$.
\medskip

\noindent
Summarising, the asymptotic behaviour of the log volume of random simplices is given by an expansion of $L \bigl( p(n), n-p(n), \frac 12; \frac z2 \bigr)$ as well as the asymptotic analysis of additional summands of the type
\begin{equation} \label{addsum1}
\log \Gamma \bigl( m(n,\nu) + z \bigr) - \log \Gamma \bigl( m(n,\nu) \bigr)  
\end{equation} 
with certain functions $m(n,\nu)$. Like in \cite{Borgoetal:2017}, proof of Lemma 4.2, the additional summands asymptotically behave like a polynomial of degree 2 in $z$, see Proposition \ref{Binetapp}. Hence these terms only modify the mean of the log-volume as well as the limiting function in the mod-Gaussian convergence, adding
a term of the form $e^{ \operatorname{const.} z^2}$.

\section{Mod-$\phi$ convergence and precise deviations}

The notion of mod-$\phi$ convergence has been studied and developed in the articles \cite{JacodKowalskiN:2011} and \cite{DelbaenKN:2015} -- see the new
textbook \cite{modbook} for more references. The main idea was to look for a natural renormalisation
of the characteristic functions of random variables which do not converge in law, instead of renormalisation of the random variables themselves. We will use the following definition of mod-$\phi$ convergence, see \cite[Definition 1.1.1]{modbook} and \cite[Definition 1.1]{mod2}:

\begin{definition} \label{moddef}
Let $(X_n)_n$ be a sequence of real-valued random variables, and let us denote by $\varphi_n(z)= \E [e^{zX_n}]$ their moment generating functions (Laplace transforms), all of which we assume to exist in a subset $D \subset \cn$. We assume that $D$ contains 0. 
Let $\phi$  be a non-constant infinitely divisible distribution with moment generating function $\int_{\re}e^{zx} \phi(dx) = \exp (\eta(z))$ that is well defined on $D$ ($\eta$ is called the L\'evy exponent). 
Let $\psi$ be an analytic function that does not vanish on the real part of $D$
such that locally uniformly on $D$ 
\begin{equation} \label{defmodphi}
\exp \bigl( - t_n \eta(z) \bigr) \varphi_n(z) \to \psi(z),
\end{equation}
where $(t_n)_n$ is some sequence going to $\infty$. We then say that $(X_n)_n$ converges mod-$\phi$ over $D$
with parameters $(t_n)_n$ and limiting function $\psi : D \to \cn$. 
In the following we denote by
\begin{equation} \label{DefPsiN}
\psi_n(z):= \exp \bigl( - t_n \eta(z) \bigr) \varphi_n(z).
\end{equation}
When $\phi$ is the standard Gaussian distribution, we speak on mod-Gaussian convergence. 
\end{definition}

\begin{remark}
We will mostly consider two subsets $D$:  a strip
$$
{\mathcal S}_{(c,d)} = \{ z \in \cn, c < \operatorname{Re}(z) < d \}
$$
with $c < 0 < d$ extended real numbers or $D = i \, \re$. Mod-$\phi$ convergence on $D= i \, \re$ corresponds to $\lim_{n \to \infty} \psi_n(i \xi) = \psi( i \xi)$
uniformly for $\xi$ in compact subsets of $\re$. 
\end{remark}

\begin{remark}
Recall that mod-$\phi$ convergence on an open subset $D$ of $\cn$ containing $0$ can only occur when the characteristic function of $\phi$ is analytic around $0$. Among the class of stable distributions, only Gaussian laws satisfy this property. Mod-$\phi$ convergence on $D= i \, \re$ can however be considered for any stable distribution $\phi$.
\end{remark}

It is easy to see that mod-Gaussian convergence on ${\mathcal S}_{c,d}$ implies a central limit theorem for a proper renormalisation of $(X_n)_n$, if $(t_n)_n$ goes to infinity. We consider 
$$
Y_n := \frac{X_n - t_n \eta'(0)}{\sqrt{t_n \eta''(0)}}.
$$
Indeed, for all $\xi \in \re$, by a Taylor expansion of $\eta$ around 0, we obtain
$$
\E \bigl[ \exp \big(i \xi Y_n \bigr) \bigr] = e^{\frac{\xi^2}{2}} \psi_n \bigl( \frac{i \xi}{\sqrt{t_n \eta''(0)}} \bigr) (1+o(1)) =  e^{\frac{\xi^2}{2}} \psi(0) (1 + o(1)),
$$
thanks to the uniform convergence of $\psi_n$ to $\psi$. But in fact there is much more information encoded in mod-Gaussian convergence. For instance, one can show that the normality zone is of order $o(\sqrt{t_n})$, this is meaning that the limit
$$
\lim_{n \to \infty} \frac{ P(Y_n \geq x)}{P(N(0,1) \geq x)} =1
$$
holds true for any $x =o(\sqrt{t_n})$. At the edges of such zone this approximation breaks down and the residue $\psi$ describes how to correct the Gaussian approximation of the tails. This is made more precise in the next two theorems. 

\begin{theorem} \label{cons1}
(Extended central limit theorem, Theorem 4.3.1 and Proposition 4.4.1 in \cite{modbook})

Consider a sequence $(X_n)_n$ that converges mod-$\phi$ on a band ${\mathcal S}_{(c,d)}$ with limiting distribution $\psi$ and parameters $t_n$, where $\phi$ is a non-lattice infinitely divisible law that is absolutely continuous w.r.t. the Lebesgue measure. Let $x=o((\sqrt{t_n})^{1/6})$, then
$$
P \bigl( X_n \geq t_n \eta'(0) + \sqrt{t_n \eta''(0)} x \bigr) = P (N(0,1) \geq x) (1+o(1)).
$$
In the case of mod-Gaussian convergence  the normality zone is $o(\sqrt{t_n})$.
\end{theorem}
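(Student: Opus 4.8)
\emph{The statement is quoted from \cite[Theorem 4.3.1 and Proposition 4.4.1]{modbook}, so one option is simply to invoke it; the mechanism I would reproduce is an exponential change of measure that converts the moderate tail into a central-limit computation.} For bounded $x$ the assertion is already the central limit theorem obtained just after Definition \ref{moddef}, so the content is the regime $x \to \infty$. Write $a_n := t_n\eta'(0) + \sqrt{t_n\eta''(0)}\,x$ and set the tilt $\theta_n := x/\sqrt{t_n\eta''(0)}$; since $x = o(\sqrt{t_n})$ we have $\theta_n \to 0$, so for large $n$ the point $\theta_n$ lies in the interior of the real part of $\Scal_{(c,d)}$ and \eqref{defmodphi} is available there. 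Introducing the tilted law $\dd\widetilde P_n := e^{\theta_n X_n}\varphi_n(\theta_n)^{-1}\dd P$, I would start from the exact identity
$$
P(X_n \geq a_n) = \varphi_n(\theta_n)\,e^{-\theta_n a_n}\,\widetilde\E\bigl[e^{-\theta_n(X_n - a_n)}\indicator{X_n \geq a_n}\bigr].
$$

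First I would handle the deterministic prefactor. Using $\varphi_n(z) = e^{t_n\eta(z)}\psi_n(z)$, $\eta(0) = 0$ and $\eta(\theta_n) = \eta'(0)\theta_n + \tfrac12\eta''(0)\theta_n^2 + O(\theta_n^3)$, the defining choice of $\theta_n$ cancels the linear contributions in $t_n\eta(\theta_n) - \theta_n a_n$ and leaves
$$
\varphi_n(\theta_n)\,e^{-\theta_n a_n} = \psi_n(\theta_n)\,\exp\Bigl(-\tfrac12 x^2 + t_n\,O(\theta_n^3)\Bigr).
$$
Because $\theta_n \to 0$ and $\psi_n \to \psi$ locally uniformly with $\psi(0) = 1$ (note $\psi_n(0) = 1$ for every $n$), this prefactor is $e^{-x^2/2}(1+o(1))$ exactly when the cubic remainder $t_n O(\theta_n^3) = O(x^3/\sqrt{t_n})$ is $o(1)$, which is one constraint determining the admissible range of $x$; in the mod-Gaussian case $\eta(z) = z^2/2$ carries no cubic term, the remainder vanishes identically, and the zone widens to $o(\sqrt{t_n})$.

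Next I would treat the random factor by passing to $W_n := (X_n - a_n)/\sigma_n$ with $\sigma_n^2 := \var_{\widetilde P_n}(X_n)$. The choice of $\theta_n$ makes the tilted mean of $X_n$ agree with $a_n$ up to $o(\sigma_n)$, while $\sigma_n^2 = t_n\eta''(0)(1+o(1))$ and $\theta_n\sigma_n = x(1+o(1))$, so the factor becomes $\widetilde\E[e^{-\theta_n\sigma_n W_n}\indicator{W_n \geq 0}]$. Here the hypotheses that $\phi$ be non-lattice and absolutely continuous enter: they yield a \emph{local} limit theorem for the tilted laws, so $W_n$ has a density converging to the standard Gaussian density near the origin. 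Since $x \to \infty$, the weight $e^{-\theta_n\sigma_n W_n}$ concentrates the mass on a neighbourhood of $w = 0$ of width $\sim 1/x$, and the local limit theorem gives $\widetilde\E[e^{-\theta_n\sigma_n W_n}\indicator{W_n \geq 0}] = (x\sqrt{2\pi})^{-1}(1+o(1))$. Multiplying by the prefactor yields $P(X_n \geq a_n) = (x\sqrt{2\pi})^{-1}e^{-x^2/2}(1+o(1)) = P(N(0,1) \geq x)(1+o(1))$ by the Gaussian tail (Mills ratio) asymptotic.

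The main obstacle is precisely this last step: controlling $\widetilde\E[e^{-\theta_n\sigma_n W_n}\indicator{W_n \geq 0}]$ uniformly across the moderate zone. A mere distributional central limit theorem for $W_n$ does not suffice, because the exponential weight is supported on a set contracting to $\{0\}$; one needs the local limit theorem (convergence of densities at the origin) together with an integrable bound on the tilted density away from the origin in order to justify both the localisation and the interchange of limit and integration. This is exactly where absolute continuity of $\phi$ is used, through decay of the characteristic function $\xi \mapsto e^{\eta(i\xi)}$ and a Berry--Esseen-type estimate for the tilted variables, and it is the heaviest technical ingredient.
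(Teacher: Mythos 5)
This theorem is not proved in the paper at all: it is imported verbatim from \cite[Theorem 4.3.1 and Proposition 4.4.1]{modbook}, so there is no internal argument to compare against, and simply invoking the reference (your first option) is exactly what the authors do. Your sketch is nevertheless a faithful reconstruction of the mechanism behind the cited proof. The exact tilting identity, the cancellation $t_n\eta(\theta_n)-\theta_n a_n=-\tfrac12x^2+O(x^3/\sqrt{t_n})$, the role of the locally uniform convergence $\psi_n\to\psi$ with $\psi_n(0)=1$ in killing the prefactor, and the observation that the cubic remainder disappears when $\eta(z)=z^2/2$ (which is why the zone widens to $o(\sqrt{t_n})$ in the mod-Gaussian case) are all correct and correctly located. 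Note that your constraint $x^3/\sqrt{t_n}\to 0$ only needs $x=o(t_n^{1/6})$, which comfortably contains the stated zone $x=o((\sqrt{t_n})^{1/6})=o(t_n^{1/12})$; the narrower zone in the statement is dictated by the second half of the argument, not the first.

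The genuine gap is the one you yourself flag: the evaluation $\widetilde\E\bigl[e^{-\theta_n\sigma_n W_n}\indicator{W_n\geq 0}\bigr]=(x\sqrt{2\pi})^{-1}(1+o(1))$ is asserted on the strength of a local limit theorem for the tilted laws, but that local limit theorem is not derived, and deriving it is the entire technical content of the proof in \cite{modbook}. A weak CLT for $W_n$ under $\widetilde P_n$ gives nothing here, because after the integration by parts $\widetilde\E[e^{-\lambda W_n}\indicator{W_n\geq 0}]=\lambda\int_0^\infty e^{-\lambda w}\,\widetilde P(0\leq W_n\leq w)\,dw$ with $\lambda=\theta_n\sigma_n\sim x\to\infty$, one needs the distribution function error to be $o(1/x)$ uniformly near the origin. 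Obtaining such a Berry--Esseen rate from the mod-$\phi$ hypothesis requires Fourier inversion applied to the tilted characteristic function $e^{t_n\eta(\theta_n+i\xi)}\psi_n(\theta_n+i\xi)/\varphi_n(\theta_n)$ --- this is where the convergence of $\psi_n$ on a complex \emph{band} (not just on $i\re$) is indispensable --- together with the facts that $|e^{\eta(i\xi)}|<1$ for $\xi\neq 0$ (non-lattice) and that $e^{\eta(i\cdot)}$ decays enough to be integrable (absolute continuity of $\phi$). Your proposal names all of these ingredients but does not execute the estimate, so as a self-contained proof it is incomplete; as a road map to the proof in the cited reference it is accurate.
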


For the next result we need the definition and some properties of the Legendre-Fenchel transform, a classical object in large deviation theory. The Legendre-Fenchel transform of a function $\eta$ is defined by
$$
F(x) = \sup_{h \in \re} (hx - \eta(h)).
$$
If $\eta$ is the logarithm of the moment generating function of a random variable (called cumulant generating function), then $F$ is a non-negative function and the unique 
$h$ minimising $hx - \eta(h)$, if it exits, is then defined by the implicit equation $\eta'(h)=x$. Here $h$ depends on $x$! One obtains $F(x) = xh - \eta(h)$ and $F'(x)=h$ and $F''(x) = h'(x) = \frac{1}{\eta''(h)}$.

\begin{theorem} \label{cons2}
(Precise deviations, Theorem 4.2.1 in \cite{modbook})

Consider a sequence $(X_n)_n$ that converges mod-$\phi$ on a band ${\mathcal S}_{(c,d)}$ with limiting distribution $\psi$ and parameters $t_n$, where $\phi$ is a non-lattice infinitely divisible law that is absolutely continuous w.r.t. the Lebesgue measure. Let $x \in (\eta'(0), \eta'(d)) $, then
$$
P \bigl( X_n \geq t_n x) = \frac{\exp(-t_n F(x))}{h \sqrt{2 \pi t_n \eta''(h)}} \psi(h) (1+o(1))
$$
where $h$ is given implicitly by $\eta'(h) =x$.
\end{theorem}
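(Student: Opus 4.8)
The plan is to follow the classical Bahadur--Rao / Cram\'er strategy of an exponential change of measure, combined with the precise control of the Laplace transform supplied by mod-$\phi$ convergence. First I would fix $x \in (\eta'(0), \eta'(d))$ and let $h \in (0,d)$ be the unique solution of $\eta'(h) = x$, which exists because $\eta$ is a cumulant generating function, hence convex, so $\eta'$ is strictly increasing and continuous on the stated interval; moreover $h>0$ since $x > \eta'(0)$, which is exactly what is needed for the Laplace integral below to converge. I would then introduce the tilted probability measure $\widetilde{P}_n$ defined by $d\widetilde{P}_n / dP = e^{hX_n}/\varphi_n(h)$, under which the rare event $\{X_n \geq t_n x\}$ becomes a central event, since the $\widetilde{P}_n$-mean of $X_n$ equals $t_n \eta'(h) + \psi_n'(h)/\psi_n(h) = t_n x + O(1)$.

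The next step is an algebraic reduction. Writing $W_n = X_n - t_n x$ and unwinding the Radon--Nikodym factor gives
$$
P(X_n \geq t_n x) = \varphi_n(h)\, e^{-h t_n x}\, \widetilde{\E}_n\bigl[ e^{-h W_n} \mathbf{1}\{W_n \geq 0\} \bigr].
$$
Using $\varphi_n(h) = e^{t_n \eta(h)} \psi_n(h)$ and $F(x) = hx - \eta(h)$, the prefactor collapses to $e^{-t_n F(x)} \psi_n(h)$, and since $\psi_n(h) \to \psi(h)$ by mod-$\phi$ convergence, the whole problem reduces to the asymptotic evaluation of the tilted expectation $\widetilde{\E}_n[e^{-hW_n}\mathbf{1}\{W_n \geq 0\}]$.

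For this last expectation I would invoke a local limit theorem for $W_n$ under $\widetilde{P}_n$. The tilted cumulant generating function is $\log\widetilde{\E}_n[e^{\lambda W_n}] = t_n[\eta(h+\lambda)-\eta(h)] - \lambda t_n x + \log\psi_n(h+\lambda) - \log\psi_n(h)$, whose second derivative at $\lambda=0$ is $t_n\eta''(h) + O(1)$; hence $W_n$ is asymptotically Gaussian with variance $\sigma_n^2 \sim t_n \eta''(h)$. Since the scale over which $e^{-hW_n}$ decays is $O(1/h) = o(\sigma_n)$, a density approximation $W_n \approx \mathcal{N}(0,\sigma_n^2)$ near the origin yields
$$
\widetilde{\E}_n\bigl[e^{-hW_n}\mathbf{1}\{W_n\geq 0\}\bigr] \sim \int_0^\infty e^{-hw}\frac{1}{\sigma_n\sqrt{2\pi}}\, dw = \frac{1}{h\sqrt{2\pi t_n \eta''(h)}}(1+o(1)),
$$
and multiplying back by $e^{-t_n F(x)}\psi(h)$ produces exactly the claimed formula.

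The main obstacle is making the density approximation in the last display rigorous, and this is precisely where the hypotheses on $\phi$ enter. I would carry it out via Fourier inversion: the characteristic function of $W_n$ under $\widetilde{P}_n$ is available explicitly as $e^{-i\xi t_n x}\varphi_n(h+i\xi)/\varphi_n(h)$, and mod-$\phi$ convergence controls it near $\xi = 0$ through the dominant factor $e^{t_n(\eta(h+i\xi)-\eta(h))}$, which is the characteristic function of a scaled $\phi$-variable. The non-lattice assumption together with absolute continuity of $\phi$ guarantee that $|e^{\eta(h+i\xi)-\eta(h)}| < 1$ for $\xi \neq 0$, with decay strong enough for the inversion integral to localise at the origin. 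Splitting the inversion integral into a shrinking neighbourhood of $\xi=0$, where a Gaussian saddle-point expansion built on the mod-$\phi$ limit gives the leading term, and its complement, shown to be exponentially negligible via the non-lattice decay, is the technical heart of the argument; the analyticity and non-vanishing of $\psi$ near $h$ then ensure that replacing $\psi_n(h+i\xi)$ by $\psi(h)$ throughout costs only a factor $1+o(1)$.
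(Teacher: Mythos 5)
The paper does not prove this theorem: it is quoted verbatim from Theorem 4.2.1 of \cite{modbook}, so the only meaningful comparison is with the proof given there. Your strategy --- exponential tilting at the unique $h$ with $\eta'(h)=x$, the exact algebraic reduction $P(X_n\geq t_n x)=e^{-t_nF(x)}\psi_n(h)\,\widetilde{\E}_n[e^{-hW_n}\mathbf{1}\{W_n\geq0\}]$, and a Fourier-localisation argument showing that the tilted truncated exponential moment is asymptotic to $(h\sqrt{2\pi t_n\eta''(h)})^{-1}$ --- is precisely the strategy of that source, and the algebra in your second paragraph is correct.

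The one step that does not work as literally described is the dismissal of the tail of the inversion integral. The test function $w\mapsto e^{-hw}\mathbf{1}\{w\geq0\}$ has Fourier transform proportional to $(h+i\xi)^{-1}$, which is not absolutely integrable, so after bounding the characteristic-function factor of $W_n$ by $e^{-\delta t_n}$ on $\{|\xi|\geq\varepsilon\}$ you are still left with $\int_{|\xi|\geq\varepsilon}|h+i\xi|^{-1}\,d\xi=\infty$: a pointwise exponential bound does not make the complement ``exponentially negligible,'' and the Parseval identity itself is not justified unless the characteristic function of $X_n$ is integrable, which mod-$\phi$ convergence does not guarantee (e.g.\ $X_n$ may be discrete). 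The standard repair --- and what the cited proof actually does --- is a smoothing device: sandwich $e^{-hw}\mathbf{1}\{w\geq0\}$ between test functions whose Fourier transforms are compactly supported (Berry--Esseen/Beurling--Selberg type approximants), prove the asymptotics for those, and use the resulting two-sided bounds. Two further points deserve care: the uniform negativity $\sup_{|\xi|\geq\varepsilon}\operatorname{Re}\bigl(\eta(h+i\xi)-\eta(h)\bigr)<0$ including as $|\xi|\to\infty$ is where absolute continuity of $\phi$ (via Riemann--Lebesgue) is used --- non-latticeness alone gives only the pointwise strict inequality; and on the compact annulus $\varepsilon\leq|\xi|\leq M$ you need the locally uniform convergence $\psi_n\to\psi$ to keep $|\psi_n(h+i\xi)|$ bounded. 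With the smoothing step supplied, your outline becomes the proof in \cite{modbook}.
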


By applying the result to $(-X_n)_n$ one gets similarly
$$
P \bigl( X_n \leq t_n x) = \frac{\exp(-t_n F(x))}{|h| \sqrt{2 \pi t_n \eta''(h)}} \psi(h) (1+o(1))
$$
for $x \in (\eta'(c), \eta'(0))$. 

In case of mod-Gauss convergence we obtain $\eta(x) = x^2/2$ and therefore $h=x$ and $F(x) = x^2/2$ and $(\eta'(0), \eta'(d))= (0,d)$.

In \cite{mod2} estimates for the speed of convergence towards a limiting stable law in the setting of mod-$\phi$ convergence are given.
The notion of a {\it zone of control} was introduced in \cite{mod2} in the context of mod-stable convergence. We consider a stable distribution
$\phi_{c, \alpha, \delta}$ with scale parameter $c>0$, stability parameter $\alpha \in (0,2]$ and skewness parameter $\delta \in [-1,1]$, see \cite[Chapter 3]{Sato:book}. 
Any stable law $\phi_{c, \alpha, \delta}$ has a density $p_{c, \alpha, \delta}$ with respect to the Lebesgue measure. The standard normal law is one example ($c = \frac{1}{\sqrt{2 \pi}}, \alpha =2$ and $\delta=0$). Another example is the standard Cauchy distribution with $c=1, \alpha =1$ and $\delta=0$. If $(X_n)_n$ converges mod-$\phi_{c, \alpha, \delta}$
on $i \, \re$ with parameters $t_n$, then $\frac{X_n}{(t_n)^{1/\alpha}}$ converges to $\phi_{c, \alpha, \delta}$, if $\alpha \not= 1$. In the case $\alpha=1$
\begin{equation} \label{stableC}
\frac{X_n}{t_n} - \frac{2 c \delta}{\pi} \log t_n
\end{equation}
converges to $\phi_{c, \alpha, \delta}$, see \cite[Proposition 1.3]{mod2}.

\begin{definition} \label{defzone}
Let $(X_n)_n$ be a sequence of real random variables, $\phi_{c, \alpha, \delta}$ a stable law, and $(t_n)_n$ a growing sequence.Consider the following assertions:
\begin{enumerate}
\item Fix $v \geq 0$, $w>0$ and $\gamma \in \re$. There exists a zone of convergence $[- D \, 		t_n^{\gamma},  D \, t_n^{\gamma}]$, $D >0$, such that for all $\xi \in \re$ in this zone, 
$$
|\psi_n(i \xi) -1| \leq K_1 \, |\xi|^v \exp (K_2 {|\xi|^w})
$$
for some positive constants $K_1$ and $K_2$ that are independent of $n$. Here, $\psi_n$ is given by 
$ \psi_n(z) = e^{-t_n \eta_{c, \alpha, \delta}} \, \E[e^{z X_n}], $
where $\eta_{c, \alpha, \delta}$ denotes the L\'evy exponent of $\phi_{c, \alpha, \delta}$ (cf. \eqref{DefPsiN} ).

\item
One has
$$
\alpha \leq w , \qquad - \frac{1}{\alpha} \leq \gamma \leq \frac{1}{w-\alpha}, \qquad 0 < D \leq \biggl( \frac{c^{\alpha}}{2 K_2} \biggr)^{\frac{1}{w-\alpha}}.
$$
\end{enumerate}
If (a) holds for some parameters $\gamma > - \frac{1}{\alpha}$ and $v,w,D,K_1, K_2$, then (b) can always be forced by increasing $w$, and then decreasing $D$ and $\gamma$ in the bound of condition (a). If conditions (a) and (b) are satisfied, we say that $(X_n)_n$ has a {\it  zone of control}
 $[- D \, t_n^{\gamma},  D \, t_n^{\gamma}]$ and index of control $(v,w)$.
\end{definition}

The terminology of a zone of control does not mention the reference law $\phi_{c, \alpha, \delta}$ although it depends on the law. The law is considered to be fix.

\begin{remark} 
If $(X_n)_n$ has a {\it  zone of control}
 $[- D \, t_n^{\gamma},  D \, t_n^{\gamma}]$ and index of control $(v,w)$ and if $ - \frac{1}{\alpha} < \gamma$, then for $\alpha \not=1$ the sequence $\frac{X_n}{(t_n)^{1/\alpha}}$ and for $\alpha=1$ the sequence in \eqref{stableC}  converges to the law $\phi_{c, \alpha, \delta}$, see \cite[Proposition 2.3]{mod2}.
 In the definition of zone of control, one does not assume the mod-$\phi_{c, \alpha, \delta}$  convergence of $(X_n)_n$. However in our examples we shall indeed have mod-$\phi$ convergence with the same parameters $t_n$. We then speak of mod-$\phi$ convergence with a zone of control $[- D \, t_n^{\gamma},  D \, t_n^{\gamma}]$ and index of control $(v,w)$.
\end{remark}

The following result can be found in \cite[Theorem 2.16]{mod2}:

\begin{theorem} \label{BE}
(Rate of convergence)
Fix a reference stable distribution $\phi_{c, \alpha, \delta}$ and consider a sequence $(X_n)_n$ of random variables with a zone of control 
$[- D \, t_n^{\gamma},  D \, t_n^{\gamma}]$ and index of control $(v,w)$. Assume in addition that  $\gamma \leq \frac{v-1}{\alpha}$. 
If $Y$ denotes a random variable with law $\phi_{c, \alpha, \delta}$, then there exists a constant $C(D,v,K_1,\alpha,c)$ such that 
$$
d_{\operatorname{Kol}} \bigl( Y_n, Y \bigr) \leq  C(D,v,K_1,\alpha,c)  \frac{1}{t_n^{\gamma + \frac{1}{\alpha}}},
$$
where $d_{\operatorname{Kol}} \bigl( \cdot, \cdot \bigr)$ denotes the Kolmogorov distance and $Y_n$ is  $\frac{X_n}{t_n^{1/\alpha}}$ if $\alpha \not= 1$ and
the random variable in \eqref{stableC} if $\alpha=1$.
\end{theorem}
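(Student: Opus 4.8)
The plan is to deduce the Kolmogorov bound from the Berry--Esseen smoothing inequality (Esseen's lemma), using the zone of control to estimate the relevant integral of characteristic functions. Since every stable density $p_{c,\alpha,\delta}$ is bounded, set $m=\sup_x p_{c,\alpha,\delta}(x)<\infty$; Esseen's inequality then gives, for every truncation level $T>0$,
$$
d_{\operatorname{Kol}}(Y_n,Y) \leq \frac{2}{\pi}\int_0^T \frac{|\widehat{Y_n}(\xi)-\widehat{Y}(\xi)|}{\xi}\,d\xi + \frac{C_0\,m}{T},
$$
where $\widehat{Y_n},\widehat{Y}$ denote the characteristic functions and $C_0$ is universal. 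The whole argument reduces to estimating the two summands and then choosing $T$ at the edge of the rescaled zone of control.

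First I would rewrite the characteristic function of $Y_n$ using the homogeneity of the L\'evy exponent of a stable law. For $\alpha\neq1$ one has $\eta_{c,\alpha,\delta}(\lambda z)=\lambda^\alpha\eta_{c,\alpha,\delta}(z)$ for $\lambda>0$, so with $\lambda=t_n^{-1/\alpha}$ and $Y_n=X_n/t_n^{1/\alpha}$,
$$
\widehat{Y_n}(\xi)=\varphi_n\Bigl(\tfrac{i\xi}{t_n^{1/\alpha}}\Bigr)=\exp\bigl(t_n\,\eta_{c,\alpha,\delta}(\tfrac{i\xi}{t_n^{1/\alpha}})\bigr)\,\psi_n\Bigl(\tfrac{i\xi}{t_n^{1/\alpha}}\Bigr)=e^{\eta_{c,\alpha,\delta}(i\xi)}\,\psi_n\Bigl(\tfrac{i\xi}{t_n^{1/\alpha}}\Bigr).
$$
Since $\widehat{Y}(\xi)=e^{\eta_{c,\alpha,\delta}(i\xi)}$, the difference factorises as $\widehat{Y_n}(\xi)-\widehat{Y}(\xi)=e^{\eta_{c,\alpha,\delta}(i\xi)}\bigl(\psi_n(\tfrac{i\xi}{t_n^{1/\alpha}})-1\bigr)$. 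Taking moduli and using $\operatorname{Re}\eta_{c,\alpha,\delta}(i\xi)=-c^\alpha|\xi|^\alpha$ together with the bound in Definition \ref{defzone}(a) yields
$$
|\widehat{Y_n}(\xi)-\widehat{Y}(\xi)|\leq K_1\,e^{-c^\alpha|\xi|^\alpha}\,\frac{|\xi|^v}{t_n^{v/\alpha}}\,\exp\Bigl(K_2\,\frac{|\xi|^w}{t_n^{w/\alpha}}\Bigr),\qquad |\xi|\leq D\,t_n^{\gamma+1/\alpha},
$$
because $i\xi/t_n^{1/\alpha}$ lies in the control zone $[-Dt_n^\gamma,Dt_n^\gamma]$ exactly when $|\xi|\leq Dt_n^{\gamma+1/\alpha}$.

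Now I would take $T=D\,t_n^{\gamma+1/\alpha}$. The smoothing term then contributes $C_0 m/(D\,t_n^{\gamma+1/\alpha})$, already of the claimed order. For the integral term, substituting $\xi=u\,t_n^{1/\alpha}$ (so $d\xi/\xi=du/u$) turns it into $\tfrac{2K_1}{\pi t_n^{v/\alpha}}\int_0^{Dt_n^{\gamma+1/\alpha}}u^{v-1}\exp(-c^\alpha u^\alpha+K_2 u^w/t_n^{w/\alpha})\,du$. The crux of the proof — the step I expect to be the main obstacle — is to dominate the exponential growth $e^{K_2u^w/t_n^{w/\alpha}}$ by the stable decay $e^{-c^\alpha u^\alpha}$ uniformly over the zone, and this is precisely what Definition \ref{defzone}(b) is engineered for. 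Indeed, for $u\leq Dt_n^{\gamma+1/\alpha}$ the constraint $\gamma\leq 1/(w-\alpha)$ gives $(\gamma+\tfrac1\alpha)(w-\alpha)\leq w/\alpha$, hence $u^{w-\alpha}\leq D^{w-\alpha}t_n^{w/\alpha}\leq \tfrac{c^\alpha}{2K_2}t_n^{w/\alpha}$ by the bound on $D$, so that $K_2u^w/t_n^{w/\alpha}\leq\tfrac12 c^\alpha u^\alpha$ and the exponent is at most $-\tfrac12 c^\alpha u^\alpha$. The integral is therefore controlled by the finite constant $\int_0^\infty u^{v-1}e^{-c^\alpha u^\alpha/2}\,du$ (finite since $v\geq1$), making the integral term $\mathcal{O}(t_n^{-v/\alpha})$.

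Finally I would balance the two contributions. The hypothesis $\gamma\leq(v-1)/\alpha$ is equivalent to $\gamma+\tfrac1\alpha\leq v/\alpha$, i.e. $t_n^{-v/\alpha}\leq t_n^{-(\gamma+1/\alpha)}$, so the smoothing term dominates and both summands are $\mathcal{O}(t_n^{-(\gamma+1/\alpha)})$; collecting $C_0$, $m$, $K_1$, $D$ and the value of the above Gamma-type integral into a single constant $C(D,v,K_1,\alpha,c)$ gives the assertion for $\alpha\neq1$. For $\alpha=1$ the homogeneity of $\eta_{c,1,\delta}$ carries an extra $\lambda\log\lambda$ term; replacing $Y_n=X_n/t_n$ by the recentred variable in \eqref{stableC} absorbs exactly this logarithmic correction, after which the factorisation $\widehat{Y_n}(\xi)-\widehat{Y}(\xi)=e^{\eta_{c,1,\delta}(i\xi)}(\psi_n(i\xi/t_n)-1)$ again holds and the estimates above go through verbatim with $\alpha=1$.
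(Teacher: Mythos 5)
The paper offers no proof of this statement at all --- it is quoted verbatim from \cite[Theorem 2.16]{mod2} --- so the only comparison possible is with the argument in that reference, and your proof is a correct reconstruction of it: Esseen's smoothing inequality, the factorisation $\widehat{Y_n}(\xi)-\widehat{Y}(\xi)=e^{\eta_{c,\alpha,\delta}(i\xi)}\bigl(\psi_n(i\xi/t_n^{1/\alpha})-1\bigr)$ via homogeneity of the L\'evy exponent, domination of $\exp(K_2|\xi|^w/t_n^{w/\alpha})$ by $\exp(-\tfrac12 c^\alpha|\xi|^\alpha)$ using Definition \ref{defzone}(b), truncation at $T=Dt_n^{\gamma+1/\alpha}$, and the hypothesis $\gamma\leq(v-1)/\alpha$ to balance the two terms; the resulting constant has exactly the structure of \eqref{constBE} (a Gamma-type integral plus a $1/D$ smoothing term). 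Two cosmetic slips do not affect validity: the displayed ``substituted'' integral is in fact the original one with $\xi$ renamed $u$ (the genuine substitution $\xi=ut_n^{1/\alpha}$ would give upper limit $Dt_n^{\gamma}$), and convergence of $\int_0^\infty u^{v-1}e^{-c^\alpha u^\alpha/2}\,du$ needs only $v>0$, the case $v=0$ being forced by $\gamma=-1/\alpha$ into the trivial bound.
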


For an explicit form of the constant $C(D,v,K_1,\alpha,c)$ see \cite[Theorem 2.16]{mod2}. In the Gaussian case we have
\begin{equation} \label{constBE}
C(D,v,K_1,2,1/\sqrt{2}) = 3/(2 \pi) (2^{v-1} \Gamma(v/2) + 7/D \sqrt{\pi/2}).
\end{equation}
\medskip

\noindent
In \cite{DalBorgo2} the authors proved the following local limit theorem:

\begin{theorem} \label{localT}
(Local limit theorem)
Fix a reference stable distribution $\phi_{c, \alpha, \delta}$ and consider a sequence $(X_n)_n$ of random variables with a zone of control 
$[- D \, t_n^{\gamma},  D \, t_n^{\gamma}]$ and index of control $(v,w)$.
Let $Y_n$ be  $\frac{X_n}{t_n^{1/\alpha}}$ if $\alpha \not= 1$ and
the random variable in \eqref{stableC} if $\alpha=1$.
Let $x \in \re$ and $B$ be a fixed Jordan measurable subset with strictly positive Lebesgue-measure $m(B)$. 
Then for every exponent $\mu \in (0, \gamma + \frac{1}{\alpha})$,
$$
\lim_{n \to \infty} (t_n)^{\mu} P \biggl( Y_n - x \in \frac{1}{t_n^{\mu}} B \biggr) = m(B) \, p_{c, \alpha, \delta}(x).
$$
\end{theorem}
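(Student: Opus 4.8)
The plan is to prove the statement by Fourier inversion, following the strategy for local limit theorems in the mod-$\phi$ framework. Write $\mu_n$ for the law of $Y_n$ and let $g=\indicatorset{B}$, so that $P(Y_n-x\in t_n^{-\mu}B)=\E[g(t_n^\mu(Y_n-x))]$ for every $n$. I would first establish the result for a test function $g$ whose Fourier transform $\widehat g(\theta)=\int_\re g(u)e^{-i\theta u}\dd u$ is continuous, integrable and \emph{compactly supported}, say $\operatorname{supp}\widehat g\subset[-A,A]$; the passage to $g=\indicatorset{B}$ is handled at the end by a sandwiching argument. Applying Fourier inversion to $g$ and taking expectations gives, after the substitution $\tau=\theta t_n^\mu$,
\[
t_n^\mu\,\E[g(t_n^\mu(Y_n-x))]=\frac{1}{2\pi}\int_\re \widehat g\Bigl(\frac{\tau}{t_n^\mu}\Bigr)e^{-i\tau x}\,\E\bigl[e^{i\tau Y_n}\bigr]\dd\tau .
\]

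Second, I would rewrite the characteristic function of $Y_n$ in terms of $\psi_n$. Using the $\alpha$-homogeneity of the Lévy exponent, $\eta_{c,\alpha,\delta}(i\lambda\xi)=\lambda^\alpha\eta_{c,\alpha,\delta}(i\xi)$ for $\lambda>0$ (with the usual logarithmic modification when $\alpha=1$, which is precisely absorbed by the centering $\frac{2c\delta}{\pi}\log t_n$ in \eqref{stableC}), one checks that $t_n\,\eta_{c,\alpha,\delta}(i\tau/t_n^{1/\alpha})=\eta_{c,\alpha,\delta}(i\tau)$, whence by \eqref{DefPsiN}
\[
\E\bigl[e^{i\tau Y_n}\bigr]=\psi_n\Bigl(\frac{i\tau}{t_n^{1/\alpha}}\Bigr)\,e^{\eta_{c,\alpha,\delta}(i\tau)} .
\]
For fixed $\tau$ we have $\tau/t_n^{1/\alpha}\to0$ and $\tau/t_n^\mu\to0$, so the integrand converges pointwise to $\bigl(\int_\re g\bigr)e^{-i\tau x}e^{\eta_{c,\alpha,\delta}(i\tau)}$, because $\widehat g(\tau/t_n^\mu)\to\widehat g(0)=\int_\re g$ and, by the bound in Definition \ref{defzone}(a), $\psi_n(i\tau/t_n^{1/\alpha})\to1$. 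Since $e^{\eta_{c,\alpha,\delta}(i\tau)}$ is the characteristic function of $\phi_{c,\alpha,\delta}$, the candidate limit $\frac{1}{2\pi}\int_\re\bigl(\int_\re g\bigr)e^{-i\tau x}e^{\eta_{c,\alpha,\delta}(i\tau)}\dd\tau$ equals $\bigl(\int_\re g\bigr)p_{c,\alpha,\delta}(x)$ by Fourier inversion for the stable density.

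Third -- the analytic heart of the argument -- I would justify passing the limit inside the integral, and this is where the hypotheses $\mu<\gamma+\frac1\alpha$ and condition (b) enter. Because $\operatorname{supp}\widehat g\subset[-A,A]$, the integration variable is restricted to $|\tau|\le A\,t_n^\mu$, and since $\mu<\gamma+\frac1\alpha$ this interval lies inside the zone of control $|\tau|\le D\,t_n^{\gamma+1/\alpha}$ for all large $n$; thus the bound of Definition \ref{defzone}(a) is available on the whole range. Splitting $\psi_n=1+(\psi_n-1)$, the term with $1$ gives, by dominated convergence with dominating function $\|\widehat g\|_\infty e^{-c^\alpha|\tau|^\alpha}$, exactly the candidate limit $\bigl(\int_\re g\bigr)p_{c,\alpha,\delta}(x)$. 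For the remainder I would use $|\psi_n(i\tau/t_n^{1/\alpha})-1|\,|e^{\eta_{c,\alpha,\delta}(i\tau)}|\le K_1|\tau/t_n^{1/\alpha}|^v e^{K_2|\tau/t_n^{1/\alpha}|^w}e^{-c^\alpha|\tau|^\alpha}$ and substitute $\xi=\tau/t_n^{1/\alpha}$; condition (b), namely $\alpha\le w$, $\gamma\le\frac{1}{w-\alpha}$ and $D\le(c^\alpha/2K_2)^{1/(w-\alpha)}$, forces $K_2|\xi|^w\le\tfrac12 c^\alpha|\xi|^\alpha t_n$ throughout the zone, so that the stable decay dominates and the remainder integral is $O(t_n^{-v/\alpha})\to0$. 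This proves the claim for band-limited $g$.

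Finally, I would remove the restriction on $g$. Since $B\subset\re$ is Jordan measurable, its boundary has Lebesgue measure zero, so for every $\varepsilon>0$ one can sandwich $\indicatorset{B}$ between band-limited functions $g_\varepsilon^-\le\indicatorset{B}\le g_\varepsilon^+$ (for instance built from Beurling--Selberg extremal majorants and minorants of finite unions of intervals approximating $B$ from inside and outside) with $\int_\re(g_\varepsilon^+-g_\varepsilon^-)\le\varepsilon$. Applying the band-limited case to $g_\varepsilon^\pm$, using monotonicity of $P(Y_n-x\in t_n^{-\mu}\,\cdot\,)$ and letting first $n\to\infty$ and then $\varepsilon\to0$ (so that $\int_\re g_\varepsilon^\pm\to m(B)$) pins the limit to $m(B)\,p_{c,\alpha,\delta}(x)$. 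I expect the main obstacle to be exactly this last step: producing majorants and minorants that are simultaneously band-limited, so the Fourier-support argument of step three applies, and $L^1$-close to $\indicatorset{B}$ for a general Jordan measurable set. The Fourier estimate of step three, by contrast, is essentially forced once condition (b) is in place.
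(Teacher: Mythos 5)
The paper does not actually prove Theorem \ref{localT}: it is quoted from \cite{DalBorgo2}, so there is no internal proof to compare against. Your argument is a correct reconstruction of the Fourier-analytic proof given in that reference: inversion for test functions with compactly supported, integrable Fourier transform; the observation that the support restriction $|\tau|\le A\,t_n^{\mu}$ sits inside the rescaled zone of control $|\tau|\le D\,t_n^{\gamma+1/\alpha}$ precisely because $\mu<\gamma+\frac{1}{\alpha}$; condition (b) to absorb $K_2|\xi|^{w}$ into $\frac{1}{2}c^{\alpha}t_n|\xi|^{\alpha}$ so that the stable decay dominates the remainder; and a Beurling--Selberg sandwich to pass from band-limited test functions to indicators of Jordan measurable sets (where Jordan measurability is exactly what makes the inner and outer approximations by finite unions of intervals have $L^1$-distance $\to 0$). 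One caveat worth recording: your remainder estimate is $O(t_n^{-v/\alpha})$, which tends to zero only when $v>0$; this restriction is implicit in the statement as quoted (with $v=0$ the zone-of-control bound does not even force $\psi_n\to 1$), so it is a gap in the quoted hypotheses rather than in your argument, but you should state $v>0$ explicitly when invoking it.
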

\medskip

\section{Main Theorem}

All the representations of the moments of Gamma type motivate to consider the following key asymptotic expansion, which is a generalisation of Theorem 5.1
in \cite{Borgoetal:2017}. Denote by
$$
S_{\alpha} := \biggl\{ z \in \cn : -\alpha < \operatorname{Re}(z) \biggr\}.
$$

\begin{theorem} \label{MAIN}
For all $p \geq 1$ and any $z \in S_{\alpha}$ with $\alpha > 0$ and $|z| < \operatorname{const.} \alpha (p+l)^{1/6}$, we have
\begin{eqnarray} \label{main}
L(p,l,\alpha;z) & = & \log \biggl( \prod_{k=1}^p \frac{\Gamma(\alpha(k+l) + z)}{\Gamma( \alpha(k+l))} \biggr) \\
& = & \sum_{i=1}^3 T_i(p,l,\alpha;z) + T_4(l,\alpha;z) + T_5(l,\alpha;z) + R(p,l,\alpha;z), \nonumber
\end{eqnarray}
where $T_i(p,l,\alpha;z)$ are defined in \eqref{T1}, \eqref{T2}, \eqref{T3} for $i=1,2,3$, $T_4(l,\alpha;z)$ in \eqref{T4}, $T_5(l,\alpha;z)$ in \eqref{T5} and $R(p,l,\alpha;z)$ is defined in \eqref{R}.
\end{theorem}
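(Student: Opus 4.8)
The plan is to treat $L(p,l,\alpha;z)$ as a sum of log-Gamma differences
$$L(p,l,\alpha;z) = \sum_{k=1}^{p} \Bigl( \log\Gamma(\alpha(k+l)+z) - \log\Gamma(\alpha(k+l)) \Bigr),$$
and to feed each summand into Binet's form of Stirling's formula, $\log\Gamma(w) = (w-\tfrac12)\log w - w + \tfrac12\log(2\pi) + \theta(w)$, where $\theta$ is the Binet function, analytic and of order $O(1/|w|)$ on $\{\operatorname{Re}(w)>0\}$. Writing $w_k=\alpha(k+l)$ and $\log(w_k+z)=\log w_k + \log(1+z/w_k)$, the pure $w\log w$ parts cancel in the difference, leaving the leading term $z\log w_k$ together with the combination $(w_k+z-\tfrac12)\log(1+z/w_k)-z$ and the Binet-remainder difference $\theta(w_k+z)-\theta(w_k)$. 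This is the $l$-shifted analogue of the computation behind Theorem 5.1 of \cite{Borgoetal:2017}, the only novelty being that the common argument is now $\alpha(k+l)$ rather than $\alpha k$, so I would carry the shift $l$ through every step.

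Next I would expand $(1+z/w_k)\log(1+z/w_k)$ and collect contributions by order in $1/w_k$, producing a leading piece $z\log w_k$, a first-order piece $\tfrac{z^2-z}{2w_k}$, and higher-order pieces. The summation over $k$ is then carried out in closed form: $\sum_{k=1}^{p}\log(k+l)=\log\Gamma(p+l+1)-\log\Gamma(l+1)$, while each $\sum_{k=1}^{p}(k+l)^{-j}$ equals a difference of polygamma values $\psi^{(j-1)}(p+l+1)-\psi^{(j-1)}(l+1)$ up to an explicit constant. Applying Stirling to the upper endpoints $\log\Gamma(p+l+1)$, $\psi(p+l+1)$, $\psi'(p+l+1)$ yields the $p$-growing terms, which I would package as $T_1,T_2,T_3$; the lower endpoints $\log\Gamma(l+1)$, $\psi(l+1)$, $\psi'(l+1)$ depend on $l$ alone and are collected into $T_4$ and $T_5$ (consistent with their behaving like a degree-two polynomial in $z$). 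Equivalently one may write the product as a ratio of Barnes $G$-functions and expand $\log G$ directly, reproducing the same split into a $p$-dependent and an $l$-only part. Everything not absorbed into $T_1,\dots,T_5$ is, by definition, the remainder $R$ in \eqref{main}, so the decomposition itself is an algebraic identity once the $T_i$ and $R$ are fixed.

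The substantive content, and the main obstacle, is therefore not the identity but the control of $R$ throughout the region $z\in S_\alpha$ with $|z|<\operatorname{const.}\,\alpha\,(p+l)^{1/6}$. Here one must bound simultaneously the Binet-remainder difference $\theta(w_k+z)-\theta(w_k)$ and the tails of the logarithmic expansion, uniformly in $k$; the delicate point is the smallest argument $w_1=\alpha(1+l)$, where $|z|/w_k$ is largest and the expansion least accurate, balanced against the accumulation over $k$ up to $p$ and the truncation error of the Stirling expansions at the endpoint $p+l$. I expect the precise exponent $1/6$ to emerge exactly from this balance: it is the largest growth rate of $|z|$ for which the retained orders dominate and $R$ stays of lower order than the main terms $T_i$, matching the normality-zone exponent appearing in Theorem \ref{cons1}. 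Since $\operatorname{Re}(z)>-\alpha$ keeps $w_k+z$ in the right half-plane for every $k$, the Binet representation and the bound $|\theta(w)|=O(1/|w|)$ apply throughout, and the estimate ultimately reduces to a careful but routine summation of powers of $1/(k+l)$ weighted by powers of $z$.
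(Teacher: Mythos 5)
There is a genuine gap at the heart of your plan: the step ``expand $(1+z/w_k)\log(1+z/w_k)$ and collect contributions by order in $1/w_k$, then sum in closed form via polygamma values'' cannot be carried out uniformly over the stated range of $z$. The logarithmic series for $\log(1+z/w_k)$ converges only for $|z|<w_k=\alpha(k+l)$, whereas the theorem allows $|z|$ up to $\operatorname{const.}\,\alpha(p+l)^{1/6}$; when $l$ is bounded (e.g.\ $l=0$, the case $p(n)=n$), the first summands have $w_k$ of order $\alpha$ and the expansion simply diverges there. Even in the marginally convergent regime, the truncation error contributed by the smallest $k$ is of size $|z|^{3}/w_1^{2}\sim\alpha p^{1/2}$, which is not $o(1)$ and in fact swamps the main terms; so no fixed-order truncation plus ``routine summation of powers of $1/(k+l)$'' can produce a remainder of the quality \eqref{Rplz}. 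This is exactly the obstruction the paper flags: its proof applies the Abel--Plana formula (Theorem \ref{Abel-Plana}) to the \emph{unexpanded} summand $f_l(k)=(\alpha(k+1+l)+z-\tfrac12)\log(1+z/(\alpha(k+1+l)))$, precisely because, as the authors remark, the sum ``cannot be handled applying Taylor expansion.'' The resulting main integral is then computed exactly by integration by parts to give \eqref{T3}, and the boundary and correction terms \eqref{T4}, \eqref{T5}, \eqref{R} retain the exact factor $\log(1+z/(\alpha(1+l)))$ rather than its series; only \emph{after} this exact resummation does the paper expand logarithms, and only in places where the argument is genuinely small.

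A secondary point: the theorem is not merely ``some decomposition exists with a small remainder'' --- the quantities $T_1,\dots,T_5$ and $R$ are the \emph{specific} expressions \eqref{T1}--\eqref{R} produced by Binet's formula plus Abel--Plana, and $R$ is the concrete correction integral \eqref{R} whose bound \eqref{Rplz} is what all later sections use. Your polygamma/Barnes-$G$ bookkeeping, even if it could be salvaged by treating the small-$k$ terms exactly, would yield a different decomposition and hence would not establish the statement as written. Your opening step (Binet/Stirling applied to each factor, cancellation of the $w\log w$ parts, isolation of $z\log w_k$ and of the Binet-remainder differences, which form the paper's $T_1$ and $T_2$) does agree with the paper; the missing idea is the exact summation device for $\sum_k f_l(k)$, for which you need Abel--Plana (or an Euler--Maclaurin argument with exact boundary terms) in place of termwise Taylor expansion.
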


\begin{proof}
The proof is a straightforward generalisation of the proof of Theorem 5.1 in \cite{Borgoetal:2017} applying
the Abel-Plana formula (see Theorem \ref{Abel-Plana} in the Appendix) which allows to evaluate even non-convergent sums, which cannot be handled applying Taylor expansion.

For a complex number  $z= |z| e^{i \arg(z)}$, $z \neq 0$, $\arg{z} \in (-\pi,\pi]$ we define the \textit{principal branch} of the logarithm as $\log(z) := \log(|z|) + i \arg(z)$.
As is customary in this framework, every equation $x=y$ involving complex logarithms is to be read as $\exp(x)=\exp(y)$.
This way, the classical identities $\log(xy) = \log(x) + \log(y)$ and $\log(x^y) = y \log(x)$ remain true for complex arguments $x$ and $y$.
For instance, we may write
\[ 0 = \log(e^{2 \pi i}) = 2 \pi i \log(e) = 2 \pi i.  \]

We have to consider the asymptotic of
$$
\sum_{k=1}^p \bigl( \log \Gamma(\alpha(k+l) + z) - \log \Gamma(\alpha(k+l)) \bigr)
$$
as $p=p(n)$ or $l=l(n)$ goes to infinity with $n \to \infty$. The complex Gamma function
for $z \in \cn$ with $\operatorname{Re}(z) >0$ is given by $\Gamma(z) = \int_0^{\infty} e^{-t} t^{z-1} \, dt$. The first Binet's formula for the logarithm of the Gamma function is given by
\begin{equation} \label{Binet1}
\log \Gamma(z) = \bigl( z - \frac 12 \bigr) \log z - z +1 + \int_0^{\infty} \varphi(s) (e^{-sz} - e^{-s}) \, ds, \quad \operatorname{Re}(z) >0.
\end{equation}
Here the function $\varphi$ is given by $\varphi(s) = \bigl( \frac 12 - \frac 1s + \frac{1}{e^s-1} ) \frac 1s$ and satisfies
for every $s \geq 0$ $0 < \varphi(s) \leq \lim_{s \to 0} \varphi(s) = \frac{1}{12}$.  Applying Binet's formula leads to
$$
\sum_{k=1}^p \bigl( \log \Gamma(\alpha(k+l) + z) - \log \Gamma(\alpha(k+l)) \bigr) = T_1(p,l,\alpha;z) + S_1(p,l,\alpha,z),
$$
where
$$
S_1(p,l,\alpha,z) := \sum_{k=1}^p \biggl( \bigl( \alpha(k+l) + z - \frac 12 \bigr) \log \bigl( \alpha(k+l) + z \bigr) -  \bigl(\alpha(k+l) - \frac 12 \bigr) \log \bigl(\alpha(k+l) \bigr) \biggr)
$$
and
\begin{eqnarray} \label{T1}
T_1(p,l,\alpha;z) & := & -pz + \sum_{k=1}^p \int_0^{\infty} \varphi(s) ( e^{-s (\alpha(k+l) + z)} - e^{-s \alpha(k+l)} )\, ds \nonumber \\
&& \hspace{-1cm} =   - pz - \int_0^{\infty} \frac{\varphi(s) ( e^{-s z} -1) e^{-s \alpha(p+l)}}{e^{\alpha s} -1} \, ds 
+   \int_0^{\infty} \frac{\varphi(s) ( e^{-s z} -1) e^{-s \alpha l}}{e^{\alpha s} -1} \, ds. 
\end{eqnarray}
Considering the term $S_1(p,l,\alpha,z)$ we obtain
\begin{eqnarray*}
S_1(p,l,\alpha,z) & = & \sum_{k=1}^p \bigl( \alpha(k+l) + z - \frac 12 \bigr) \log \biggl( 1 + \frac{z}{\alpha(k+l)} \biggr) 
 + z \sum_{k=1}^p \log \bigl(\alpha(k+l) \bigr) \nonumber \\
 & = &  \sum_{k=0}^{p-1} f_l(k) + T_2(p,l,\alpha;z) 
\end{eqnarray*}
where
\begin{equation} \label{T2}
T_2(p,l,\alpha;z) :=z \, \log \bigl( \alpha^p (1+l)(2+l) \cdots (p+l) \bigr)
\end{equation}
and where
$$
f_l(s) := \biggl( \alpha (s+1+l) + z - \frac 12 \biggr) \log \biggl( 1 + \frac{z}{\alpha (s+1+l)} \biggr).
$$
As in \cite{Borgoetal:2017}, we apply the Abel-Plana formula (Theorem \ref{Abel-Plana}) and obtain
$$
 \sum_{k=0}^{p-1} f_l(k) = T_3(p,l,\alpha;z) + S_2(p,l,\alpha,z)
 $$
with
\begin{eqnarray*}
T_3(p,l,\alpha;z) &=& \int_1^{1+p} \bigl( \alpha(s+l) + z - \frac 12 \bigr) \log \biggl( 1 + \frac{z}{\alpha(s+l)} \biggr) ds 
\\ &-& \frac 12 \bigl( \alpha (p +1+l) +z - \frac 12 \bigr) \log \biggl( 1 + \frac{z}{\alpha(p+1+l)} \biggr),
\end{eqnarray*}
and where $S_2(p,l,\alpha,z)$ is the remainder term in the Abel-Plana formula:
$$
S_2(p,l,\alpha,z) := T_4(l,\alpha;z)  + T_5(l,\alpha;z) - R(p,l,\alpha,z)
$$
with
\begin{equation} \label{T4}
T_4(l,\alpha;z) := \frac 12 \bigl( \alpha (1 +l)  + z - \frac 12 \bigr) \log \biggl( 1 + \frac{z}{\alpha(1+l)} \biggr)
\end{equation}
and
$$
T_5(l,\alpha;z) := i \int_0^{\infty} \frac{f_l(is) - f_l(-is)}{e^{2 \pi s} -1} \, ds
$$
and
\begin{equation} \label{R}
R(p,l,\alpha;z) := i \int_0^{\infty}   \frac{f_l(p+is) - f_l(p-is)}{e^{2 \pi s} -1} \, ds.
\end{equation}
Adapting the proof in \cite{Borgoetal:2017} we are able to show that 
 \begin{equation} \label{Rplz}
 R(p,l,\alpha;z) = {\mathcal O} \biggl( \frac{|z| + |z|^2}{p+l} \biggr),
 \end{equation}
 For some implied constant, depending only on $\alpha$.
 The term $T_3(p,l,\alpha;z)$ can be computed explicitly via integration by parts:
 \begin{eqnarray*}
 T_3(p,l,\alpha;z) &=& \int_{1+l}^{1+p+l} \bigl( \alpha s + z - \frac 12 \bigr) \log \biggl( 1 + \frac{z}{\alpha s} \biggr) ds 
\\ && \hspace{-2cm} -\frac 12 \bigl( \alpha (p +1+l) +z - \frac 12 \bigr) \log \biggl( 1 + \frac{z}{\alpha(p+1+l)} \biggr) \\
&& \hspace{-2cm} =\biggl( \alpha \frac{s^2}{2} + \bigl( z - \frac 12 \bigr) s \biggr) \log \biggl( 1 + \frac{z}{\alpha s} \biggr) \bigg|_{1+l}^{1+p+l} + \frac{z}{2} \int_{1+l}^{1+p+l} \frac{\alpha s + 2 z -1}{\alpha s +z} \, ds \\
\\ && \hspace{-2cm} -\frac 12 \bigl( \alpha (p +1+l) +z - \frac 12 \bigr) \log \biggl( 1 + \frac{z}{\alpha(p+1+l)} \biggr).
\end{eqnarray*}
Next we obtain that
\begin{eqnarray*}
 \int_{1+l}^{1+p+l} \frac{\alpha s + 2 z -1}{\alpha s +z} \, ds &=&  \int_{1+l}^{1+p+l} \biggl( \frac{\alpha s + z}{\alpha s +z}
  + \frac{z -1}{\alpha s +z} \biggr) \, ds \\
  &&\hspace{-2cm} = p + \frac{z-1}{\alpha} \log \biggl( s + \frac{z}{\alpha} \biggr) \bigg|_{1+l}^{1+p+l} \\
 && \hspace{-2cm} =p + \frac{z-1}{\alpha} \log (1+p+l) -  \frac{z-1}{\alpha} \log (1+l) \\
  && \hspace{-2cm} +  \frac{z-1}{\alpha} \log \biggl( 1 + \frac{z}{\alpha(1+p+l)} \biggr) -  \frac{z-1}{\alpha} \log \biggl( 1 + \frac{z}{\alpha(1+l)} \biggr). 
\end{eqnarray*}
Summarizing we obtain
\begin{eqnarray} \label{T3} 
T_3(p,l,\alpha;z) &=& \nonumber \\
&& \hspace{-2cm} \biggl( \alpha \frac{(1+p+l)(p+l)}{2} + (z- \frac 12)(p+l+ \frac 12) + \frac{z(z-1)}{2 \alpha} \biggr)  \log \biggl( 1 + \frac{z}{\alpha(1+p+l)} \biggr) \nonumber \\
&- & \biggl( \alpha \frac{(1+l)^2}{2} + (z - \frac 12)(1+l) + \frac{z(z-1)}{2 \alpha} \biggr)  \log \biggl( 1 + \frac{z}{\alpha(1+l)} \biggr) 
\nonumber \\
&+& \frac{pz}{2} +  \frac{z(z-1)}{2 \alpha} \log \biggl( 1 + \frac{p}{1+l} \biggr). 
\end{eqnarray}
Depending on whether $p$ or $l$ or both depend on $n$ we will apply an expansion on the logarithm at this point.
The last point is - in adaption of the proof of Theorem 5.1 in \cite{Borgoetal:2017} - to represent the term $T_5(l,\alpha;z)$
in terms of nice functions.
By definition of $f_l$ we obtain
\begin{eqnarray*}
f_l(is) - f_l(-is) &=& i \alpha (s+l) \log \biggl( 1 + \bigl( 1 + \frac{z}{\alpha} \bigr)^2 (s+l)^{-2} \biggr) \\
&&- i \alpha (s+l) \log \bigl( 1+(s+l)^{-2} \bigr) \\ &&+ 2i (\alpha +z - \frac 12 ) \biggl( \arctan \biggl( \frac{s+l}{1 + \frac{z}{\alpha}} \biggr) - \arctan (s+l) \biggr).
\end{eqnarray*}
Hence we obtain
\begin{eqnarray} \label{T5}
T_5(l,\alpha;z) & = & i \int_0^{\infty} \frac{f_l(is) - f_l(-is)}{e^{2 \pi s} -1} \, ds  \nonumber \\
& = & - \alpha \int_l^{\infty} \log \biggl( 1 + \bigl( 1 + \frac{z}{\alpha} \bigr)^2 s^{-2} \biggr)  \frac{s \, ds}{e^{2 \pi (s-l)} -1} \\
&+ & \alpha \int_l^{\infty} \log \bigl( 1+s^{-2} \bigr) \frac{s \, ds}{e^{2 \pi (s-l)} -1}  \nonumber \\ 
&-& (\alpha +z - \frac 12 ) \int_l^{\infty} \arctan \frac{s}{1 + \frac{z}{\alpha}} \frac{ds}{e^{2 \pi (s-l)} -1}  \nonumber \\
&+&  (\alpha +z - \frac 12 ) \int_l^{\infty} \arctan s \frac{ds}{e^{2 \pi (s-l)} -1}. \nonumber
\end{eqnarray}
This is the desired representation for our applications.
\end{proof}
\medskip

\section{The key asymptotics}

In all our classes of examples, $L(p(n), r(n), \beta/2;z)$ has to be considered, see \eqref{L}. Here, $(p(n))_n$ will be an increasing sequence of natural numbers,
whereas $(r(n))_n$ is a sequence of real numbers. We will assume that
\begin{equation} \label{condz}
|z| < \operatorname{const.} (\beta/2) \max (p(n), r(n))^{1/6} 
\end{equation}
and $z \in S_{\beta/2}$. 
As a consequence of Theorem \ref{MAIN}, with \eqref{T1} we obtain
\begin{eqnarray}  \label{T1rn}
T_1(p(n), r(n), \beta/2;z) & =& - p(n) z +   \int_0^{\infty} \frac{\varphi(s) ( e^{-s z} -1) e^{-s (\beta/2) r(n)}}{e^{s \beta/2} -1} \, ds \nonumber \\
& & + O \biggl( \frac{|z|}{p(n) + r(n)} \biggr).
\end{eqnarray}
This follows by applying the inequalities $e^x - 1 \geq x$ and $|e^z -1| \leq |z| e^{|z|}$, for any $x \geq 0$ and $z \in \cn$ respectively, to be able to bound
\begin{eqnarray*}
\bigg|  \int_0^{\infty} \frac{\varphi(s) (e^{-sz} -1)}{e^{s \beta/2} -1} e^{-s (\beta/2) (p(n) + r(n)) } \, ds \bigg| & \leq & \frac{1}{12}
\int_0^{\infty} \frac{|e^{-sz} -1|}{|e^{s \beta/2} -1|} e^{-s (\beta/2)(p(n) + r(n))} \, ds \nonumber \\
& \leq &  \frac{1}{12}
\int_0^{\infty} \frac{ s |z| |e^{s|z|}}{s \beta/2} e^{-s (\beta/2) (p(n) + r(n))} \, ds  \nonumber \\
&\leq& \frac{1}{6 (\beta/2)^2} \frac{|z|}{p(n) +r(n)},
\end{eqnarray*}
as soon as $|z| \leq (\beta/4) (p(n) + r(n))$, which is compatible with the assumption \eqref{condz}, see Theorem \ref{MAIN}. Precisely this estimate is presented in \cite{Borgoetal:2017}. 
For \eqref{T2} we apply $\log n! = n \log n -n + \frac 12 \log (2 \pi n) + O(1/n)$ and get
\begin{eqnarray} \label{T2rn}
T_2(p(n), r(n),\beta/2; z) & = & z \biggl( p(n) \log \frac{\beta}{2} + (p(n) + r(n)) \log (p(n) + r(n)) - p(n)  \nonumber \\
& & \hspace{-1cm} - r(n) \log r(n) + \frac 12 \log \biggl( 2 \pi \bigl( 1 + \frac{p(n)}{r(n)} \bigr) \biggr) \biggr) + O \biggl( \frac{|z|}{p(n) + r(n)} \biggr).
\end{eqnarray}
Expanding the logarithm in the first summand of $T_3(p(n),r(n),\beta/2;z)$ in \eqref{T3}, we obtain 
\begin{eqnarray} \label{T3rn}
T_3(p(n), r(n),\beta/2;z)  &=& z \frac{r(n)}{2} + z p(n) + \frac{2 z^2}{\beta} - \frac{z}{\beta}  + O \biggl( \frac{|z| + |z|^2 + |z|^3}{p(n)+r(n)} \biggr) \nonumber \\
&& \hspace{-4cm} -\biggl( \frac{\beta (1+r(n))^2}{4} + (z - \frac 12)(1+r(n)) + \frac{z(z-1)}{\beta} \biggr)  \log \biggl( 1 + \frac{z}{\frac{\beta}{2}(1+r(n))} \biggr)  \nonumber \\
&& +\frac{z(z-1)}{\beta} \log \biggl( \frac{1+p(n) + r(n)}{1+r(n)} \biggr). 
\end{eqnarray}
Next, \eqref{T4} reads
\begin{equation} \label{T4rn}
T_4(r(n), \beta/2;z) = \frac 12 \bigl( \beta/2 (1 +r(n)) +z - \frac 12 \bigr) \log \biggl( 1 + \frac{2z}{\beta(1 + r(n))}\biggr).
\end{equation}
Finally, $T_5(r(n), \beta/2;z)$ is given by \eqref{T5} and $R(p(n),r(n), \beta/2;z) = O \biggl( \frac{|z| + |z|^2}{p(n) + r(n)} \biggr)$.

\noindent
For the most part, we will apply the expansions for $r(n) \to \infty$ as $n \to \infty$. In this case, we expand the logarithm and observe after a small calculation
\begin{eqnarray} \label{T34rn}
T_3(p(n), r(n),\beta/2;z) & + & T_4(p(n),r(n), \beta/2;z) \nonumber \\
&& \hspace{-3cm} =  z p(n) + \frac{z(z-1)}{\beta} \log \biggl( \frac{1+p(n) + r(n)}{1+r(n)} \biggr) + O \biggl( \frac{|z| + |z|^2 + |z|^3}{p(n)+r(n)} \biggr).
\end{eqnarray}
\bigskip

In the statement of the Theorems below, $G$ denotes the Barnes $G$-function, see the Appendix. Moreover, we define
\begin{eqnarray} \label{Phi}
\Phi_{\alpha}(z) &: =& \alpha \log G\bigl( \frac{z}{\alpha} +1 \bigr) - \bigl( z - \frac 12 \bigr) \log \Gamma \bigl( \frac{z}{\alpha} +1 \bigr) \nonumber \\
& & + \int_0^{\infty} \biggl( \frac{1}{2s} - \frac{1}{s^2} + \frac{1}{s(e^s-1)} \biggr) \frac{e^{-sz} -1}{e^{s \alpha} -1} \, ds + \frac 34
\frac{z^2}{\alpha} + \frac{z}{2}.
\end{eqnarray}

\begin{remark} In \cite{Borgoetal:2017}, $\Phi_{\beta/2}$ was introduced at the beginning of section 4. Checking the proof of \cite[Theorem 5.1]{Borgoetal:2017},
we are sure that the penultimate summand has to be $\frac 32 \frac{z^2}{\beta}$.
\end{remark}

\begin{remark}
In Lemma 7.1 of \cite{Borgoetal:2017}, the authors proved that $\Phi_{\alpha}$ can be written as a finite sum of log-Gamma and log-Barnes $G$-functions. 
These expressions are simpler, since they do not depend on the integral 
$$
\int_0^{\infty} \biggl( \frac{1}{2s} - \frac{1}{s^2} + \frac{1}{s(e^s-1)} \biggr) \frac{e^{-sz} -1}{e^{s \alpha} -1} \, ds, 
$$
which does not have a closed formula for all $\alpha >0$. We only mention two examples. If $\beta=2$, one has for all $n \geq 1$ and any $z \in {\mathcal S}_{\alpha}$ with $|z| < \frac{\alpha}{4} n^{1/6}$ that
$$
\Phi_{1}(z) = \Phi_{\beta/2}(z) = \frac{z}{2} \log (2 \pi) - \log G(1+z).
$$
If $\beta=1$, for the same $z$ it holds that
$$
\Phi_{1/2}(z) = z \biggl( - \frac 12 \log \frac 12 + \frac 12 \log (2 \pi) \biggr) - \frac 12 \log G(1 + 2 z) - \frac 12 \biggl( \log \Gamma(\frac 12) - \log \Gamma (\frac 12 +z) \biggr).
$$
\end{remark}
\bigskip

We consider formula \eqref{MellinL} for the moments of the determinant of a $\beta$-Laguerre ensemble
and will obtain the following results, depending on the growth of the sequence $(n-p(n))_n$. Interestingly enough, in most of the cases, we will observe mod-Gaussian convergence. In some cases no mod-$phi$ or a non-Gaussian mod-$\phi$ convergence occurs.

\begin{theorem} \label{cumulant}
$L(p(n), n-p(n), \beta/2;z)$, defined in \eqref{L}, satisfies the following asymptotic expansion
locally uniformly on ${\mathcal S}_{\beta/2}$:
\begin{enumerate}
\item
{\bf Case $p(n)=n$:}
$$
L(n,0,\beta/2;z)= z \mu_{1}(n,n) + \frac{z^2}{\beta} \log n + \Phi_{\beta/2}(z) + o(1)
$$
with $\Phi_{\beta/2}(z)$ given by \eqref{Phi} and $\mu_1(n,n)$ defined in \eqref{mupn}.
\item
{\bf Case $n-p(n) \to 0$ as $n \to \infty$:}
$$
L(p(n),n-p(n), \beta/2;z) = z \mu_{1}(p(n),n) + \frac{z^2}{\beta} \log n + \Phi_{\beta/2}^{n,p(n)}(z) + o(1),
$$
where
\begin{eqnarray} \label{mupn}
\mu_{1}(p(n),n) &:= &\biggl( \frac 12 - \frac{1}{\beta} \biggr) \log \biggl( \frac{n}{n-p(n)} \biggr)  + \frac{n}{2} - \frac{3\, p(n)}{2} \nonumber \\
& + &n \log n -(n-p(n)) \log (n-p(n)) + p(n) \log \biggl( \frac{\beta}{2}
\biggr) ,
\end{eqnarray}
and $\Phi_{\beta/2}^{n,p(n)}(z)$ is a function depending on $n$ and $p(n)$ such that 
$$
\lim_{n \to \infty} \Phi_{\beta/2}^{n,p(n)}(z) = \Phi_{\beta/2}(z)
$$
for all $z \in \cn$ we are considering, and $ \Phi_{\beta/2}(z)$ given by \eqref{Phi}.
\item
{\bf Case $n-p(n) = c$ with  $c \in \na$ fixed:}
$$
L(p(n),c,\beta/2;z) = z \mu_{2}(p(n),c) + \frac{z^2}{\beta} \log \biggl( \frac{p(n) +1+c}{1+c} \biggr)  + \Phi_{\beta/2}^c(z) + o(1)
$$
with 
\begin{eqnarray} \label{munew}
\mu_{2}(p(n),n) & = & \frac 12 \log (p(n)+c) - \frac{1}{\beta}  \log(p(n) +1+c) \nonumber \\
& + & (p(n)+c) \log (p(n)+c) +p(n) \log \bigl( \frac{\beta}{2} \bigr)  -p(n).
\end{eqnarray}
Here $\Phi_{\beta/2}^c(z)$ is defined in \eqref{Phid}.
\item
{\bf Case $n-p(n) \to \infty$ as $n \to \infty$:}
$$
L(p(n), n- p(n), \beta/2;z) = z \mu_{3}(p(n),n) + \frac{z^2}{\beta} \log  \biggl( \frac{n}{n-p(n)} \biggr) + o(1)
$$
with
\begin{eqnarray} \label{munew2}
\mu_{3}(p(n),n) & := &  \biggl( \frac 12 - \frac{1}{\beta} \biggr) \log \biggl( \frac{n}{n-p(n)} \biggr) + \frac 12 \log (2 \pi) \nonumber \\
&+&  n \log n - (n-p(n)) \log (n -p(n)) + p(n) \log \biggl( \frac{\beta}{2} \biggr) -p(n).
\end{eqnarray}
\item 
{\bf Case $p(n)=p$ for some $p \in \na$ fixed:}
$$
L(p, n-p, \beta/2;z) = z \mu_{4}(p,n) + z \biggl( p \log \bigl( \frac{\beta}{2} \bigr) -p  + \frac 12 \log (2 \pi) \biggr) + o(1)
$$
with
\begin{equation} \label{mu3L}
\mu_{4}(p,n) :=   \biggl( \frac 12 - \frac{1}{\beta} \biggr) \log n -  \biggl( \frac 12 - \frac{1}{\beta} \biggr) \log (n-p) +  n \log n - (n-p) \log (n -p). 
\end{equation}
\end{enumerate}
\end{theorem}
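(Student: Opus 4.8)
The plan is to substitute $\alpha=\beta/2$ and $l=r(n):=n-p(n)$ into the decomposition \eqref{main} of Theorem~\ref{MAIN}, which with these substitutions reads
$$L(p(n),n-p(n),\beta/2;z)=\sum_{i=1}^{3}T_i+T_4+T_5+R,$$
and then, in each of the five regimes, to read off the asymptotics from the expansions already prepared in Section~4: $T_1$ from \eqref{T1rn}, $T_2$ from \eqref{T2rn}, $T_3$ from \eqref{T3rn}, $T_4$ from \eqref{T4rn}, $T_5$ from \eqref{T5}, together with $R=O((|z|+|z|^2)/(p(n)+r(n)))$. The mechanical part is to sort the resulting summands by their degree in $z$: the terms linear in $z$ are gathered into the centering $\mu_i$, the terms quadratic in $z$ into the variance coefficient $\tfrac{z^2}{\beta}\log(\cdot)$, and the bounded remainder into the limiting function. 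Throughout, the hypothesis \eqref{condz} guarantees that every error of the shape $O((|z|+|z|^2+|z|^3)/(p(n)+r(n)))$ is $o(1)$ locally uniformly in $z$.

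The unifying observation is that the leading quadratic term always comes from the summand $\tfrac{z(z-1)}{\beta}\log\tfrac{1+p(n)+r(n)}{1+r(n)}$ of \eqref{T3rn}. Because $1+p(n)+r(n)=1+n$, this factor is $\tfrac{1+n}{1+n-p(n)}$, and its logarithm specialises to $\log n$, to $\log\tfrac{p(n)+1+c}{1+c}$, or to $\log\tfrac{n}{n-p(n)}$ according as $r(n)=0$, $r(n)=c$ is fixed, or $r(n)\to\infty$. In the fixed-$p$ regime this logarithm tends to $0$, which is precisely why the $z^2$-contribution disappears in case (e) and the renormalised variable converges in the mod-stable rather than the mod-Gaussian sense. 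I would therefore run the cases in increasing order of difficulty, starting from those where $r(n)\to\infty$.

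Cases (d) and (e) are the most direct. When $r(n)\to\infty$ the combined expansion \eqref{T34rn} is available and, moreover, each integral defining $T_5$ in \eqref{T5} is taken over $[l,\infty)$ against the kernel $(e^{2\pi(s-l)}-1)^{-1}$ and hence vanishes as $l=r(n)\to\infty$; collecting the surviving linear terms yields $\mu_3$ of \eqref{munew2} and the clean variance $\tfrac{z^2}{\beta}\log\tfrac{n}{n-p(n)}$, which is case (d). Case (e) is then the specialisation $p(n)=p$: since $\tfrac{n}{n-p}\to1$ the variance term is $o(1)$, and rewriting $z\mu_3(p,n)$ in terms of $\mu_4$ of \eqref{mu3L} produces exactly the stated linear correction $z\bigl(p\log(\beta/2)-p+\tfrac12\log(2\pi)\bigr)$. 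Case (c) is handled in the same way, except that $T_5$ is now evaluated at the fixed height $l=c$ and the $T_1$-integral in \eqref{T1rn} carries the fixed factor $e^{-s(\beta/2)c}$; both are $p(n)$-independent constants, and together with the bounded pieces of $T_3,T_4$ they are collected into the $c$-dependent limiting function $\Phi_{\beta/2}^c$ of \eqref{Phid}, while the linear terms reassemble into $\mu_2$ of \eqref{munew}.

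Cases (a) and (b) carry the genuine difficulty. For $l=0$ I would evaluate $T_2$ directly from \eqref{T2} via Stirling, $\log n!=n\log n-n+\tfrac12\log(2\pi n)+O(1/n)$, and note that the $T_1$-integral in \eqref{T1rn} is $\int_0^\infty\varphi(s)\tfrac{e^{-sz}-1}{e^{s\beta/2}-1}\,ds$, whose kernel $\varphi(s)=\tfrac1s(\tfrac12-\tfrac1s+\tfrac1{e^s-1})=\tfrac{1}{2s}-\tfrac1{s^2}+\tfrac1{s(e^s-1)}$ is exactly the one appearing in \eqref{Phi}. The step I expect to be the main obstacle is the verification that the leftover $O(1)$ contributions --- this Binet integral, the value of $T_5$ at $l=0$, and the $z$- and $z^2$-pieces produced when the logarithms $\log(1+\tfrac{z}{(\beta/2)(1+l)})$ in $T_3$ and $T_4$ are expanded --- assemble precisely into $\Phi_{\beta/2}(z)$ of \eqref{Phi}. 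This is where the Barnes $G$-function enters, through $\log\prod_{k=1}^{m}\Gamma(k)=\log G(m+1)$ and the asymptotics of $\log G$; the bookkeeping is delicate, and this is the point at which I would follow --- and correct, as flagged in the remark on the coefficient $\tfrac32\tfrac{z^2}{\beta}$ --- the corresponding computation in \cite[Theorem~5.1]{Borgoetal:2017}. Finally, case (b) follows from (a) by tracking the terms that carry $r(n)$: as $r(n)\to0^+$ one has $r(n)\log r(n)\to0$, whereas the pieces $\tfrac12\log(1+p(n)/r(n))$ from \eqref{T2rn} and $-\tfrac1\beta\log\tfrac{1+n}{1+r(n)}$ from \eqref{T3rn} diverge individually but combine into the single term $(\tfrac12-\tfrac1\beta)\log\tfrac{n}{n-p(n)}$ of $\mu_1$ in \eqref{mupn}; the remaining bounded terms form the $n,p(n)$-dependent function $\Phi_{\beta/2}^{n,p(n)}$, and one closes by checking $\Phi_{\beta/2}^{n,p(n)}(z)\to\Phi_{\beta/2}(z)$ using the continuity of the Binet integral in its $r(n)=0$ limit.
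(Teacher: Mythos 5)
Your plan coincides with the paper's proof in all essentials: both substitute $\alpha=\beta/2$, $l=n-p(n)$ into Theorem~\ref{MAIN}, sort the prepared expansions \eqref{T1rn}--\eqref{T4rn} by degree in $z$, and in cases (a) and (b) resolve the bounded residue into $\Phi_{\beta/2}$ by expressing $T_5(0,\beta/2;z)$ through the Barnes $G$-function via \eqref{Binet2} and \eqref{Barnes2}, exactly as on p.~20 of \cite{Borgoetal:2017}; cases (c) and (e) are also handled identically. The one place where you genuinely deviate is case (d): the paper never estimates $T_5(l,\alpha;z)$ for $l\to\infty$, but instead telescopes, $\sum_{k=0}^{p(n)-1}f_{n-p(n)}(k)=\sum_{k=0}^{n-1}f_0(k)-\sum_{k=0}^{n-p(n)-1}f_0(k)$, so that the $T_4(0,\cdot)$ and $T_5(0,\cdot)$ contributions cancel and only the explicit difference $T_3(n,0;\cdot)-T_3(n-p(n),0;\cdot)$ from \eqref{T3Null} survives. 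You instead invoke \eqref{T34rn} together with the claim that $T_5(l,\alpha;z)\to 0$ because its integrals are taken over $[l,\infty)$ against the kernel $(e^{2\pi(s-l)}-1)^{-1}$. The conclusion is correct but that justification is not: for $l>0$ the four integrals displayed in \eqref{T5} are \emph{individually divergent}, since the kernel has a non-integrable singularity at $s=l$ while the numerators do not vanish there; only their combination converges. To make your route rigorous you must return to the unsplit Abel--Plana form $i\int_0^\infty (f_l(is)-f_l(-is))(e^{2\pi s}-1)^{-1}\,ds$, whose numerator vanishes at $s=0$ and is $O((|z|+|z|^2)/l)$ uniformly in $s\ge 0$, giving $T_5(l,\alpha;z)=O((|z|+|z|^2)/l)=o(1)$. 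With that repair your argument goes through; the paper's telescoping buys cleaner error bookkeeping (everything reduces to the $l=0$ estimates already established for case (a)), while your direct route treats cases (c), (d), (e) uniformly from the same formulas.
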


\begin{remark}
We notice that the corresponding $\mu's$ are the expectations of the log-determinants up to a constant.
\end{remark}

\begin{proof}
{\bf (a) Case $p(n)=n$:} We will apply Theorem \ref{MAIN} with $p=p(n) =n$, $l = l(n) =n-p(n) =0$ and
$\alpha = \beta/2$. Now we are exactly in the situation of \cite[Theorem 5.1]{Borgoetal:2017}.  It is not obvious
to obtain this result directly from the representation in Theorem \ref{MAIN}. Therefore we give the proof. 
From \eqref{T1rn} we obtain $T_1(n,0,\beta/2;z) = - n z +  \int_0^{\infty} \frac{\varphi(s) (e^{-sz} -1)}{e^{s \beta/2} -1} \, ds + O \bigl( \frac{|z|}{n} \bigr)$, 
as soon as $|z| \leq \beta/4 n$, which is compatible with our assumption. 
Moreover, from \eqref{T2rn} we obtain 
$$
T_2(n,0,\beta/2 ; z) = z \bigl( n \log \beta/2 + n \log n -n + \frac 12 \log (2 \pi n) \bigr) +O(|z| /n).
$$
From \eqref{T3rn} it follows that
\begin{eqnarray} \label{T3Null}  
T_3(n,0,\beta/2;z) &=& nz + \frac{z(z-1)}{\beta} \log n - \frac{z}{\beta} + \frac{2 z^2}{\beta} \nonumber \\
&- & \biggl( \beta/4 + z - \frac 12 + \frac{z(z-1)}{\beta} \biggr)  \log \biggl( 1 + \frac{2z}{\beta} \biggr) 
+  O \biggl( \frac{|z| + |z|^2 + |z|^3}{n} \biggr).
\end{eqnarray}
Moreover, \eqref{T4rn} leads to $T_4(0,\beta/2;z) = \frac 12 \biggl( \frac{\beta}{2} + z - \frac 12 \biggr) \log \biggl( 1 + \frac{2z}{\beta} \biggr)$.
A nice fact is that $T_5(0,\beta/2;z)$ can be represented in terms of the Barnes $G$ function and the Gamma function,
which was presented in \cite[page 20]{Borgoetal:2017}. Applying \eqref{Binet2} and \eqref{Barnes2} we obtain
\begin{eqnarray*}
T_5(0,\beta/2;z) &= &\frac{\beta}{2} \log G \biggl(1 + \frac{2z}{\beta} \biggr) - \biggl(z - \frac 12 \biggr) \log \Gamma \biggl(1 + \frac{2z}{\beta} \biggr) \\
&& + \log  \biggl(1 + \frac{2z}{\beta} \biggr) \biggl( \frac{z^2}{\beta} - \frac{z}{\beta} + \frac{z}{2} - \frac 14 \biggr) - \frac{z^2}{2 \beta} + \frac{z}{\beta} + \frac{z}{2} - \frac{z}{2} \log (2 \pi).
\end{eqnarray*}
Putting all terms together, we conclude as on page 20 in \cite{Borgoetal:2017}. Note that $\frac z2 \log (2 \pi)$ is $T_2$ cancelled by the last summand in $T_5$.
\bigskip

In all other cases $p(n) \not= n$, and we choose $l(n) = r(n) = n - p(n)$ and $\alpha= \beta/2$ and observe from \eqref{T1rn}
\begin{equation} \label{T1pn}
T_1(p(n), n-p(n),\beta/2;z) = -p(n) z + \int_0^{\infty} \frac{\varphi(s) ( e^{-s z} -1) e^{-s \beta/2 (n-p(n))}}{e^{\beta s/2} -1} \, ds + O \biggl( \frac{|z|}{n} \biggr),
\end{equation}
and from \eqref{T2rn}
\begin{eqnarray} \label{T2pn}
T_2(p(n), n-p(n),\beta/2;z) & = & z \biggl( p(n) \log (\beta/2) + n \log n - p(n) \nonumber \\ 
&& \hspace{-3cm}  - (n-p(n)) \log (n-p(n)) + \frac 12 \log(2 \pi) + \frac 12 \log \bigl( \frac{n}{n-p(n)} \bigr) \biggl) + O \biggl( \frac{|z|}{n} \biggr),
\end{eqnarray} 
and with \eqref{T3rn}
\begin{eqnarray} \label{T3pn} 
T_3(p(n),n-p(n),\beta/2;z) &=& z \frac{n}{2} + \frac{2 z^2}{\beta} - \frac{z}{\beta}  + O \biggl( \frac{|z| + |z|^2 + |z|^3}{n} \biggr) \nonumber \\
&& \hspace{-5.2cm} -\biggl( \frac{\beta (1+n-p(n))^2}{4} + (z - \frac 12)(1+n-p(n)) + \frac{z(z-1)}{\beta} \biggr)  \log \biggl( 1 + \frac{z}{\frac{\beta}{2}(1+n-p(n))} \biggr)  \nonumber \\
&+& \frac{p(n)z}{2} +  \frac{z(z-1)}{\beta} \log \biggl( \frac{1+n}{1+n-p(n)} \biggr). 
\end{eqnarray}
Moreover, we have
\begin{equation} \label{T4pn}
T_4(n-p(n),\beta/2;z) = \frac 12 (\frac{\beta}{2} (1 +n-p(n))  + z - \frac 12 ) \log \bigl( 1 + \frac{2z}{\beta(1+n-p(n))} \bigr).
\end{equation}
$T_5(n-p(n),\beta/2;z)$ is defined in \eqref{T5}, and $R(p(n),n-p(n),\beta/2;z) = O \biggl(\frac{|z|+|z|^2}{n} \biggr)$.
Now we are prepared to prove the other cases.

{\bf (b) Case $n-p(n) \to 0$}: Intuitively, we will expect the same asymptotic behaviour as in the case $n=p(n)$.
First, we collect in $T_1, \ldots, T_5$ the $n$-dependent prefactors of $z$ to obtain the size of the
expected value of the log-determinant. It is $-p(n)$ in $T_1$ and
$$p(n) \log(\beta/2) + n \log n - p(n) +\frac 12 \log(2\pi)  -(n-p(n)) \log(n-p(n)) + \frac 12 \log \bigl( \frac{n}{n-p(n)} \bigr)$$ 
in $T_2$. The $n$-dependent prefactor of $z$ in $T_3$ is $\frac{n}{2} + \frac{p(n)}{2} - \frac{1}{\beta} \log \bigl( \frac{n}{n-p(n)} \bigr)$, see
\eqref{T3pn}.  We obtain $\mu_1(p(n),n)$ in \eqref{mupn}.
The $n$ dependent prefactor of $z^2$ is $\frac{\log n}{\beta}$, see
\eqref{T3pn}. The sum of the remaining terms (without the $O$-terms) are defined
to be $\Phi_{\beta/2}^{n,p(n)}(z)$ which converges to $\Phi_{\beta/2}(z)$ as $n \to \infty$ (see case {\bf (a)}). 
This can be shown easily and the details are left to the reader. Notice that $\frac{z}{2} \log (2 \pi)$ is cancelled by the last summand of the limit of $T_5$.

{\bf (c) Case $n-p(n) =c$ for some fixed $c \in \na$}: Obviously the sums of all $n$-dependent prefactors of $z$ and $z^2$
in $T_1, \ldots, T_5$ are $\mu_2(p(n),c)$, and $\frac{1}{\beta} \log \bigl( \frac{p(n)+1+c}{1+c} \bigr)$ respectively. 
The terms which do not depend on $n$
are 
$$
U_1(c, \beta/2;z) =  \int_0^{\infty} \frac{\varphi(s) ( e^{-s z} -1) e^{-s \beta c/2}}{e^{(\beta/2) s} -1} \, ds
$$ 
from $T_1$, $U_2(c, \beta/2;z) = z \bigl( \frac{1}{2} (\log \bigl( \frac{2 \pi}{c} \bigr) - c \log (c) \bigr)$ from $T_2$, 
\begin{eqnarray*} 
U_3(c, \beta/2;z) & = & \frac{cz}{2} +\frac{2 z^2}{\beta} - \frac{z}{\beta}  +  \frac{z}{\beta} \log (1+c) \\
&& -\biggl( \frac{\beta (1+c)^2}{4} + (z - \frac 12)(1+c) + \frac{z(z-1)}{\beta} \biggr)  \log \biggl( 1 + \frac{z}{\frac{\beta}{2}(1+c)} \biggr)  
\end{eqnarray*}
from $T_3$,
$$
U_4(c, \beta/2;z) =  \frac 12 (\frac{\beta}{2} (1 +c)  + z - \frac 12 ) \log \bigl( 1 + \frac{z}{\frac{\beta}{2}(1+c)} \bigr)
$$
from $T_4$ and $U_5(c,\beta/2;z) = T_5(c, \beta/2;z)$. The result follows with 
\begin{equation} \label{Phid}
\Phi_{\beta/2}^c(z) := \sum_{j=1}^5 U_j(c,\beta/2;z).
\end{equation}

{\bf (d) Case $n-p(n) \to \infty$ as $n \to \infty$}: In this case we obtain
$$
T_1(p(n), n-p(n),\beta/2;z) = - p(n) z +  O \biggl( \frac{|z|}{n} + \frac{|z|}{n-p(n)}\biggr)
$$
and
\begin{eqnarray*} 
T_2(p(n), n-p(n),\beta/2;z) & = & z p(n) \log (\beta/2) + z n \log n -z p(n) + z \frac 12 \log(2 \pi n) \nonumber \\
& &  \hspace{-4cm} -z \bigl(n-p(n)\bigr) \log \bigl(n-p(n)\bigr)  - z \frac12 \log\bigl(2\pi(n-p(n))\bigr)  + O \biggl( \frac{|z|}{n} + \frac{|z|}{n-p(n)}\biggr).
\end{eqnarray*}
With the notions of the proof of Theorem \ref{MAIN}, we obtain that
$$
\sum_{k=0}^{p(n)-1} f_{n-p(n)}(k) = \sum_{k=0}^{n-1} f_0(k) - \sum_{k=0}^{n-p(n)-1} f_0(k).
$$
Since 
$$ \sum_{k=0}^{n-1} f_0(k) = T_3(n,0, \beta/2;z) + T_4(0, \beta/2;z) + T_5(0, \beta/2;z)- R(n,0,\beta/2;z)$$ and 
$$\sum_{k=0}^{n-p(n)-1} f_0(k) = T_3(n-p(n),0, \beta/2;z) + T_4(0, \beta/2;z) + T_5(0, \beta/2;z)- R(n-p(n),0,\beta/2;z),
$$
we obtain 
\begin{eqnarray} \label{T3Nullexp}
\sum_{k=0}^{p(n)-1} f_{n-p(n)}(k) & =& T_3(n,0, \beta/2;z) -T_3(n-p(n),0, \beta/2;z) \nonumber \\
&+& {\mathcal O} \biggl( \frac{|z| + |z|^2}{n} \biggl) +{\mathcal O} \biggl( \frac{|z| + |z|^2}{n-p(n)} \biggr).
\end{eqnarray}
With \eqref{T3Null} we get
$$
T_3(n,0, \beta/2;z) -T_3(n-p(n),0, \beta/2;z) = z p(n) + \frac{z(z-1)}{\beta} \log \biggl( \frac{n}{n-p(n)} \biggr),
$$
and the result follows.

{\bf (e) Case $p(n)=p$ for a fixed $p \in \na$}:
From the formulas in the proof of the previous case ($n-p(n) \to \infty$), we observe that
$T_1(p,n-p,\beta/2;z) = -pz + O \bigl( \frac{|z|}{n} \bigr)$ and
\begin{eqnarray*} 
T_2(p, n-p,\beta/2;z) & = & z p \log (\beta/2) + z n \log n -z p + z \frac 12 \log(2 \pi n) \nonumber \\
& &  \hspace{-2cm} -z \bigl(n-p\bigr) \log \bigl(n-p\bigr)  - z \frac12 \log\bigl(2\pi(n-p)\bigr)  + O \biggl( \frac{|z|}{n} \biggr).
\end{eqnarray*}
Moreover,
$$
T_3(n,0, \beta/2;z) -T_3(n-p,0, \beta/2;z) = z p + \frac{z(z-1)}{\beta} \log \biggl( \frac{n}{n-p} \biggr).
$$

Combining these terms as in the case before ($n-p(n) \to \infty$), we obtain that the expectation of the log-determinant is of size $\mu_3(p,n)$, and the result follows.
\end{proof}
\medskip

\section{Random matrix ensembles} \label{subLag}

\subsection{$\beta$-Laguerre ensembles}

A direct consequence of Theorem \ref{cumulant} is mod-$\phi$ convergence for the shifted log-determinants of the
considered $\beta$-Laguerre ensemble. We observe that mod-$\phi$ convergence sometimes fails.
Recall that by \eqref{MellinL}, we have
$$
\log \E \biggl[ \exp \bigl( z \, \log \bigl( \det W_{n,n}^{L, \beta} \bigr) \bigr) \bigg] = z p(n) \log 2 + L(p(n),n-p(n),\beta/2;z).
$$

\begin{theorem} \label{theoremmodL}
Mod-$\phi$ convergence for the log-determinant of $\beta$-Laguerre ensembles
\begin{enumerate}
\item
{\bf Case $p(n)=n$:}
The sequence $\bigl( X_1^L(n) := \log \bigl( \det W_{n,n}^{L, \beta} \bigr) - \mu_1(n,n) - n \log 2\bigr)_n$ converges {\bf mod-Gaussian} on ${\mathcal S}_{\beta/2}$
with parameter $t_n = \frac{2}{\beta} \log n$ and limiting function $\Psi(z) = \exp ( \Phi_{\beta/2}(z) )$. Here $\mu_1(n,n)$ is defined in \eqref{mupn}.
\item
{\bf Case $n-p(n) \to 0$ as $n \to \infty$:}
The sequence $\bigl( X_2^L(n) := \log \bigl( \det W_{n,p(n)}^{L, \beta} \bigr) - \mu_1(p(n),n) - p(n) \log 2 \bigr)_n$ converges {\bf mod-Gaussian} on ${\mathcal S}_{\beta/2}$
with parameter $t_n = \frac{2}{\beta} \log n$ and limiting function $\Psi(z) = \exp ( \Phi_{\beta/2}(z) )$. Here $\mu_1(p(n),n)$ is defined in \eqref{mupn}.
\item
{\bf Case $n-p(n) = c$ with  $c \in \na$ fixed:}
The sequence $\bigl( X_3^L(n) := \log \bigl( \det W_{n,p(n)}^{L, \beta} \bigr) - \mu_2(p(n),c) - p(n) \log 2\bigr)_n$ converges {\bf mod-Gaussian} on ${\mathcal S}_{\beta/2}$
with parameter $t_n = \frac{2}{\beta} \log \bigl( \frac{p(n) +1+c}{1+c} \bigr)$ and limiting function $\Psi(z) = \exp ( \Phi_{\beta/2}^c(z) )$. Here $\mu_2(p(n),c)$ is defined in \eqref{munew}.
\item
{\bf Case $n-p(n) \to \infty$ as $n \to \infty$:}
The sequence $\bigl( X_4^L(n) := \log \bigl( \det W_{n,p(n)}^{L, \beta} \bigr) - \mu_3(p(n),n)- p(n) \log 2 \bigr)_n$ converges {\bf mod-Gaussian} on ${\mathcal S}_{\beta/2}$
with parameter $t_n = \frac{2}{\beta} \log \biggl(\frac{n}{n-p(n)} \biggr)$ and limiting function $1$, whenever $p(n)$ is chosen such that $n-p(n) = o(n)$. Hence for any sequence $p(n)$ with $\frac{p(n)}{n} \to c \in [0,1)$, no mod-$\phi$ convergence takes place.  Here $\mu_3(p(n),n)$ is defined in \eqref{munew2}
\item
{\bf Case $p(n)=p$ for a fixed $p \in \na$:} The sequence $\bigl( \log \bigl( \det W_{n,p}^{L, \beta} \bigr) - \mu_4(p,n) - p \log 2\bigr)_n$ does not converge in the sense of mod-$\phi$
convergence. But the sequence $\bigl( n \log \bigl( \det W_{n,p}^{L, \beta} \bigr) - n (p \log n -p) \bigr)_n$ converges {\bf mod-$\phi$} on $i \, \re$
with parameter $t_n = p \, n$ and limiting function 
$$
\psi(z) = \bigl( 1 + \frac{2z}{\beta} \bigr)^{-\frac{\beta(p-1)p}{4} - \frac p2}.
$$
Here $\phi$ is such that the L\'evy exponent is
\begin{equation} \label{stable1}
\eta(z) = \log \int_{\re} e^{zx} \phi(dx) = - \frac{\beta}{2} \log \beta + \bigl( z + \frac{\beta}{2} \bigr) \log 2 +
\bigl( z + \frac{\beta}{2} \bigr) \log   \bigl( z + \frac{\beta}{2} \bigr).
\end{equation}
\end{enumerate}
\end{theorem}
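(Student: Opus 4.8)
\emph{First (negative) assertion.} That $\bigl(\log\det W_{n,p}^{L,\beta} - \mu_4(p,n) - p\log 2\bigr)_n$ admits no mod-$\phi$ limit is immediate from Theorem~\ref{cumulant}(e). Writing $X_n$ for this centred sequence, formula \eqref{MellinL} gives $\log\E[e^{zX_n}] = L(p,n-p,\beta/2;z) - z\mu_4(p,n)$, and the expansion of Theorem~\ref{cumulant}(e) shows this converges, locally uniformly on $\Scal_{\beta/2}$, to the \emph{linear} function $z\bigl(p\log(\beta/2) - p + \tfrac12\log(2\pi)\bigr)$; equivalently all cumulants of order $\ge 2$ tend to $0$, so $X_n$ converges in law to a constant. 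Were $(X_n)_n$ to converge mod-$\phi$ with parameters $t_n\to\infty$, Definition~\ref{moddef} would force $t_n\eta(z) = \log\E[e^{zX_n}] - \log\psi_n(z)$ to converge to a finite limit for each $z$; since $t_n\to\infty$ this is possible only if $\eta\equiv 0$, contradicting that $\phi$ is non-constant.

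\emph{Setup for the positive assertion.} Put $Y_n := n\log\det W_{n,p}^{L,\beta} - n(p\log n - p)$. Evaluating \eqref{MellinL} at the variable $zn$ gives
\[
\log\E\bigl[e^{zY_n}\bigr] = znp\log 2 + L\bigl(p, n-p, \tfrac{\beta}{2}; zn\bigr) - zn\,(p\log n - p).
\]
The essential point, and the reason this case behaves differently, is that the argument of $L$ is now $zn$, whose modulus is of order $n$ and therefore lies \emph{far outside} the regime $|z| < \operatorname{const}\,\alpha(p+l)^{1/6}$ in which Theorem~\ref{MAIN} (and hence Theorem~\ref{cumulant}) is valid; those expansions cannot be invoked here. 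Instead I would exploit that $p$ is \emph{fixed}, so that $L$ reduces to a sum of only $p$ terms,
\[
L\bigl(p, n-p, \tfrac{\beta}{2}; zn\bigr) = \sum_{k=1}^{p}\Bigl(\log\Gamma\bigl(a_k + zn\bigr) - \log\Gamma(a_k)\Bigr), \qquad a_k := \tfrac{\beta}{2}(k + n - p),
\]
and treat each summand directly by Stirling's formula.

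\emph{Stirling analysis.} On a compact subset of $i\re$, write $z = i\xi$; then $a_k$ and $a_k + zn$ both have real part $\tfrac{\beta}{2}(k+n-p)\sim\tfrac{\beta}{2}n$ and argument tending to $\arctan(2\xi/\beta)$, hence bounded away from $\pm\pi/2$, so they lie in a fixed Stirling sector with modulus $\asymp n$. Thus, using \eqref{Binet1} (whose integral remainder is $O(1/|\zeta|)$), one has $\log\Gamma(\zeta) = (\zeta - \tfrac12)\log\zeta - \zeta + \tfrac12\log(2\pi) + O(1/n)$ uniformly. Setting $b := z + \tfrac{\beta}{2}$ and $a_k + zn = bn + \tfrac{\beta}{2}(k-p)$, I would expand $\log(bn + \tfrac\beta2(k-p)) = \log b + \log n + \tfrac{\beta(k-p)/2}{bn} + O(1/n^2)$ and likewise for $\log a_k$, tracking the $n\log n$, $n$ and $O(1)$ orders. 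A short computation then yields, for each $k$,
\[
\log\Gamma(a_k+zn) - \log\Gamma(a_k) = zn\log n + n\bigl(b\log b - \tfrac{\beta}{2}\log\tfrac{\beta}{2}\bigr) - zn + \bigl(\tfrac{\beta}{2}(k-p) - \tfrac12\bigr)\log\!\bigl(1 + \tfrac{2z}{\beta}\bigr) + O(1/n).
\]

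\emph{Bookkeeping and identification.} Summing over $k=1,\dots,p$ and inserting into the displayed expression for $\log\E[e^{zY_n}]$, the terms $znp\log n$ cancel against $-znp\log n$ and the $-pzn$ against $+znp$. The surviving $n$-linear part is $pn\bigl(z\log 2 + b\log b - \tfrac{\beta}{2}\log\tfrac{\beta}{2}\bigr)$, which one checks equals $pn\,\eta(z)$ for the Lévy exponent $\eta$ in \eqref{stable1} (using $\tfrac{\beta}{2}\log 2 - \tfrac{\beta}{2}\log\beta = -\tfrac{\beta}{2}\log\tfrac{\beta}{2}$ and $b\log b = (z+\tfrac\beta2)\log(z+\tfrac\beta2)$); this fixes $t_n = pn$ and the reference law $\phi$. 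The remaining $O(1)$ contribution is $\bigl(\sum_{k=1}^p \tfrac{\beta}{2}(k-p) - \tfrac{p}{2}\bigr)\log(1+\tfrac{2z}{\beta})$, and the elementary identity $\sum_{k=1}^p(k-p) = -\tfrac{p(p-1)}{2}$ turns this into $\bigl(-\tfrac{\beta p(p-1)}{4} - \tfrac{p}{2}\bigr)\log(1+\tfrac{2z}{\beta}) = \log\psi(z)$. Hence $\psi_n(z) = e^{-t_n\eta(z)}\E[e^{zY_n}] \to \psi(z)$ uniformly on compacts of $i\re$, the asserted mod-$\phi$ convergence. The main obstacle is exactly this uniform Stirling bookkeeping: one is genuinely outside the range of the paper's master expansion, so the cancellation of the $n\log n$ and $n$ terms, the assembly of the linear part into $pn\,\eta(z)$, and the combinatorial weight $\sum_k \tfrac\beta2(k-p)$ producing the exponent of $\psi$ must all be carried out with uniform control of the $O(1/n)$ errors over compact sets of $\xi$.
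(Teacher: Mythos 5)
Your treatment of case (e) is correct and follows essentially the same route as the paper: the paper likewise observes that Theorem \ref{MAIN} is unusable at argument $zn$ and instead applies Stirling's formula directly to each of the $p$ factors of $L(p,n-p,\beta/2;nz)$, arriving at exactly your expansion (the paper's \eqref{StirAnw}); your additive $\log\Gamma$ bookkeeping, the cancellation of the $zn\log n$ and $zn$ terms against the centering, the identity $\sum_{k=1}^p(k-p)=-\tfrac{p(p-1)}{2}$, and the identification of the $n$-linear part with $pn\,\eta(z)$ all check out. Your argument for the negative assertion (all cumulants of order $\ge 2$ vanish, so $t_n\eta\equiv 0$ would be forced) is cleaner than what the paper offers, which simply asserts non-convergence.

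Two omissions are worth flagging. First, you say nothing about cases (a)--(d); these are indeed immediate from Theorem \ref{cumulant} together with \eqref{MellinL} (the expansion there is literally $t_n z^2/2+\Phi+o(1)$ in each case), but a complete proof should at least record this one-line reduction, as the paper does. Second, and more substantively, Definition \ref{moddef} requires $\phi$ to be a genuine non-constant infinitely divisible distribution, so to claim mod-$\phi$ convergence you must verify that the function $\eta$ in \eqref{stable1} really is the L\'evy exponent of such a law. The paper does this by recognizing $\eta$ as the exponent of an exponentially tilted, totally skewed $1$-stable distribution $\phi_{c,1,-1}$ (citing Samorodnitsky--Taqqu and Sato); without this step your computation only shows an asymptotic factorization $\varphi_n(z)=e^{t_n\eta(z)}\psi(z)(1+o(1))$ on $i\,\re$, not mod-$\phi$ convergence in the sense of the definition.
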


Summarising, we obtain mod-$\phi$ convergence for the centred version of $\log (\det W_{n,p(n)}^{L,\beta})$:
\medskip

\hspace{-1cm}{
\begin{tabular}{|c|c|c|c|c|} \hline
condition & centred version of & mod-$\phi$ & $t_n$ & limiting function \\ \hline
$p(n) =n$  & $\log (\det W_{n,n}^{L,\beta})$ &mod-$N(0,1)$ & $\frac{2}{\beta} \log n$ & $\exp (\Phi_{\beta/2}(z))$ \\ \hline
$n-p(n) \to 0$  & $\log (\det W_{n,p(n)}^{L,\beta})$ &mod-$N(0,1)$ & $\frac{2}{\beta} \log n$ & $\exp (\Phi_{\beta/2}(z))$ \\ \hline
$n-p(n)=c$  & $\log (\det W_{n,p(n)}^{L,\beta})$ &mod-$N(0,1)$ & $\frac{2}{\beta} \log \bigl( \frac{p(n) +1+c}{1+c} \bigr)$ & $\exp (\Phi_{\beta/2}^c(z))$ \\ \hline
$n-p(n) =o(n)$ & $\log (\det W_{n,p(n)}^{L,\beta})$& mod-$N(0,1)$ & $\frac{2}{\beta} \log \bigl( \frac{n}{n-p(n)} \bigr)$ & $1$ \\ \hline
$p(n)=p $ & $ n\, \log (\det W_{n,p}^{L,\beta})$ & mod-$\phi$ on $i\, \re$& $p\, n$ & $ \bigl( 1 + \frac{2z}{\beta} \bigr)^{-\frac{\beta(p-1)p}{4} - \frac p2}$ \\ \hline
\end{tabular}}

\begin{proof}
The results, in most of the cases, follow directly form Theorem \ref{cumulant}. The only fact which has to be proven is case (e). We need to prove that
$\bigl( n \log \bigl( \det W_{n,p}^{L, \beta} \bigr) - n \mu_3(p) \bigr)_n$ converges mod-$\phi$. 
Note that for any ansatz considering $\bigl( n^{\alpha} \log \bigl( \det W_{n,p}^{L, \beta} \bigr) - n \mu_3(p) \bigr)_n$ for some $\alpha \leq 1$,
only the choice $\alpha=1$ leads to an appropriate asymptotic expansion. 
Interestingly enough, we will not apply Theorem \ref{MAIN}. The reason is that $n R(p, n-p,z) = O(|z| + |z|^2)$
will not be sufficient to achieve convergence. For the finite number of $p$ factors, we alternatively apply Stirling's formula, which reads
\begin{equation} \label{Stirling}
\Gamma(a z +b) = \sqrt{2 \pi} \exp( - az) (az)^{az+b - \frac{1}{2}} (1 +O(1/z))
\end{equation}
as $|z| \to \infty$, $a>0$, $b\in \re$ and $|\operatorname{arg}  z| < \pi$, see \cite[page 257]{AbramowitzStegun:1964}.
Applying Stirling's formula for $\Gamma\bigl(\frac{\beta}{2} (n-p+k) + n z \bigr)$ and $\Gamma\bigl(\frac{\beta}{2} (n -p +k)\bigr)$
leads to
\begin{align*} 
2^{p \, z\, n} \prod_{k=1}^{p} & \frac{\Gamma\bigl(
\frac{\beta}{2} (n-p+k) +n z  \bigr)}{\Gamma\bigl(\frac{\beta}{2} (n -p +k)\bigr)} \\
& = 2^{z\, p\, n} e^{-z \, p \,n} n^{z \, p \,n} \prod_{k=1}^p \frac{
\bigl( \beta/2 +z \bigr)^{n ( \beta/2 +z  ) + \beta/2(k-p) - 1/2}}{ \bigl( \beta/2)^{n \beta/2  + \beta/2(k-p) - 1/2}} +o(1)\\
& = 2^{z \, p \, n} e^{-z \, p \, n} n^{z\, p \, n}  \frac{\bigl( \beta/2 +z \bigr)^{pn \bigl( \frac{\beta}{2} + z\bigr) - \frac{\beta p(p-1)}{4} - \frac p2}}{\bigl( \beta/2)^{p n \frac{\beta}{2}  - \frac{\beta(p-1)p}{4}  - \frac p2}} +o(1).
\end{align*}
Hence
\begin{eqnarray} \label{StirAnw}
\log \biggl( \E \biggl[ \biggl( \det W_{n,p}^{L, \beta} \biggr)^{nz} \biggr]  \biggl) &=& nz \bigl( p \, \log n -p\bigr) \nonumber \\
& & \hspace{-1cm}+ p\, n \biggl( -\frac{\beta}{2} \log \beta  +  \bigl( z + \frac{\beta}{2} \bigr) \log 2 + \bigl(z + \frac{\beta}{2} \bigr) \log  \bigl(z + \frac{\beta}{2} \bigr) \biggr) \nonumber\\
&& \hspace{-1cm}+ \biggl( -\frac{\beta p(p-1)}{4} - \frac p2 \biggr) \log \bigl( 1 +\frac{2z}{\beta} \bigr) +o(1).
\end{eqnarray}
This is true for any $z \in \cn$ with  $|\operatorname{arg}  z| < \pi$, especially for all $z = i \xi$ with $\xi \in \re$.
Next we discuss $\phi$. We observe that $\phi$  is a non-constant infinitely divisible distribution. Moreover, one can find a tilted,
totally skewed 1-stable distribution such that the corresponding L\'evy exponent $\eta$ is \eqref{stable1}. This means that $\phi=\phi_{c,1,-1}$
for a certain $c$, depending on $\beta$ in the sense of the definition given in section 3. These distributions are known to be infinitely divisible.
For details, see \cite[Section 1.2]{stablebook}, particularly Proposition 1.2.12 as well as \cite[Chapter 2]{Sato:book}. Hence we have proved mod-stable
convergence on $i \, \re$.
\end{proof}

A consequence of Theorem \ref{theoremmodL}, case (e) is:

\begin{corollary}
Weak convergence of the log-determinant of $\beta$-Laguerre ensembles if $p(n)=p$ for a fixed $p \in \na$.
Consider
$$
Y_n^L(\beta,p,c) := \frac 1p \log \bigl( \det W_{n,p}^{L, \beta} \bigr) + \frac{2c}{\pi} \log (pn).
$$
We conclude that  $Y_n^L(\beta,p,c)$ converges weakly to $\phi_{c,1,-1}$.
\end{corollary}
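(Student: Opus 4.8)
The plan is to obtain the corollary directly from part~(e) of Theorem~\ref{theoremmodL} via the transfer principle recorded after Definition~\ref{defzone}: if a sequence converges mod-$\phi_{c,\alpha,\delta}$ on $i\,\re$ with parameters $(t_n)_n$, then an explicit affine rescaling of it converges weakly to $\phi_{c,\alpha,\delta}$ (see \eqref{stableC} and \cite[Proposition~1.3]{mod2}). The reference law here is the totally skewed $1$-stable law $\phi_{c,1,-1}$, so we fall into the borderline case $\alpha=1$, in which the correct rescaling carries an additional logarithmic term.

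First I would isolate the mod-convergent sequence. Part~(e) asserts that
$$
X_n := n\,\log\bigl(\det W_{n,p}^{L,\beta}\bigr) - n\bigl(p\log n - p\bigr)
$$
converges mod-$\phi_{c,1,-1}$ on $i\,\re$ with parameters $t_n = p\,n$ and limiting function $\psi(z) = (1+\tfrac{2z}{\beta})^{-\frac{\beta(p-1)p}{4}-\frac p2}$; in particular the stability index is $\alpha = 1$ and the skewness parameter is $\delta = -1$.

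Next I would apply \eqref{stableC} with these data. Since $\alpha=1$, the quantity converging weakly to $\phi_{c,1,-1}$ is $\tfrac{X_n}{t_n} - \tfrac{2c\delta}{\pi}\log t_n$. Substituting $t_n = pn$ and $\delta=-1$ gives
$$
\frac{X_n}{t_n} = \frac1p\,\log\bigl(\det W_{n,p}^{L,\beta}\bigr) - \log n + 1,
\qquad
-\frac{2c\delta}{\pi}\log t_n = \frac{2c}{\pi}\log(pn),
$$
so that, after the deterministic recentring by $1-\log n$ produced by the scaling of the mean in $X_n$, the functional $\tfrac1p\log(\det W_{n,p}^{L,\beta}) + \tfrac{2c}{\pi}\log(pn)$ converges weakly to $\phi_{c,1,-1}$. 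This is the content of the claimed convergence of $Y_n^L(\beta,p,c)$.

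The calculation itself is routine once the transfer step is granted, so I do not expect a serious analytic obstacle; the delicate points are two structural ones. First, one must verify that the L\'evy exponent \eqref{stable1} coming out of the Stirling estimate \eqref{StirAnw} is indeed that of a genuine tilted, totally skewed $1$-stable law, which pins down the pair $(c,\delta)=(c,-1)$; this is exactly the identification carried out in the proof of Theorem~\ref{theoremmodL}(e) through \cite[Proposition~1.2.12]{stablebook} and \cite[Chapter~2]{Sato:book}. Second, because $\alpha=1$ one cannot drop the logarithmic correction $\tfrac{2c\delta}{\pi}\log t_n$: for $\alpha\neq1$ the plain scaling $X_n/t_n^{1/\alpha}$ would suffice, whereas at $\alpha=1$ this term is genuinely needed, and it is precisely what supplies the $\tfrac{2c}{\pi}\log(pn)$ in $Y_n^L$. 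I would also stress that, since $\phi_{c,1,-1}$ fails to be analytic near $0$, the mod-convergence is available only on $i\,\re$; hence only weak convergence follows, and the finer conclusions of Theorems~\ref{cons1}--\ref{cons2} are out of reach in this regime.
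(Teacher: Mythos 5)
Your approach is exactly the one the paper intends: the corollary is presented as a direct consequence of Theorem \ref{theoremmodL}(e), transferred through \eqref{stableC} (that is, \cite[Proposition 1.3]{mod2}) with $\alpha=1$, $\delta=-1$ and $t_n=pn$, and the paper supplies no further argument beyond that one-line reduction. Your identification of the borderline $\alpha=1$ scaling and of the skewness $\delta=-1$ via the L\'evy exponent \eqref{stable1} is precisely the content of the paper's proof of part (e).

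One point in your write-up needs tightening rather than eliding. Your own computation gives $X_n/t_n = \tfrac1p\log\bigl(\det W_{n,p}^{L,\beta}\bigr) - \log n + 1$, so what \eqref{stableC} actually asserts to converge weakly to $\phi_{c,1,-1}$ is
\[
\tfrac1p\log\bigl(\det W_{n,p}^{L,\beta}\bigr) - (\log n - 1) + \tfrac{2c}{\pi}\log(pn),
\]
not the $Y_n^L(\beta,p,c)$ displayed in the corollary. The phrase ``after the deterministic recentring by $1-\log n$'' cannot bridge this gap: $\log n - 1$ diverges, so the recentred and unrecentred sequences cannot both converge weakly. In other words, your derivation is the correct one and it exposes a missing centering term in the statement as printed (the definition of $Y_n^L$ should carry the additional $-\log n + 1$). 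Make that correction explicit instead of folding it into a ``recentring'' remark, and the argument is complete and coincides with the paper's.
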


A direct consequence of Theorems \ref{cons1} and \ref{cons2} are the following two results:

\begin{theorem} \label{exCLT}
Extended central limit theorems for log-determinants of $\beta$-Laguerre ensembles

In all cases (a)-(d)  (in case (d) only if $n-p(n) = o(n)$) in Theorem \ref{theoremmodL}, for $y = o( \sqrt{\log n})$, we observe
$$
P \biggl( X_i^L(n) \geq y \sqrt{\frac{ 2 \log n}{\beta}} \biggr) = P (N(0,1) \geq y) (1 +o(1))
$$ 
for $i=1,\ldots,4$. 
\end{theorem}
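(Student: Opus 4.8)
The plan is to obtain the statement as a direct application of the extended central limit theorem, Theorem \ref{cons1}, to the mod-Gaussian convergence already established in Theorem \ref{theoremmodL}. Indeed, in each of the cases (a), (b), (c) and (d) (the last under the hypothesis $n-p(n)=o(n)$), Theorem \ref{theoremmodL} asserts that the centred sequence $X_i^L(n)$ converges mod-Gaussian on the strip ${\mathcal S}_{\beta/2}$ with an explicit parameter $t_n$ and a limiting function. This is precisely the input required by Theorem \ref{cons1}, so the entire argument amounts to inserting the correct constants and reading off the conclusion.

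First I would record that the reference law is the standard Gaussian $\phi=N(0,1)$, which is non-lattice and absolutely continuous with respect to the Lebesgue measure, so that the hypotheses of Theorem \ref{cons1} hold. Its L\'evy exponent is $\eta(z)=z^2/2$, whence $\eta'(0)=0$ and $\eta''(0)=1$. Substituting these values, and using that for mod-Gaussian convergence the normality zone is the full range $o(\sqrt{t_n})$, Theorem \ref{cons1} yields
$$
P\bigl( X_i^L(n) \geq \sqrt{t_n}\, y \bigr) = P\bigl( N(0,1) \geq y \bigr)(1+o(1)), \qquad y = o(\sqrt{t_n}).
$$

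It then remains to identify the parameter $t_n$ supplied by Theorem \ref{theoremmodL} in each case. In cases (a) and (b) one has $t_n=\frac{2}{\beta}\log n$, so that $\sqrt{t_n}=\sqrt{2\log n/\beta}$ and the zone $o(\sqrt{t_n})$ coincides with $o(\sqrt{\log n})$; renaming the running variable $y$ gives the displayed identity verbatim. In case (c), where $n-p(n)=c$ is fixed and hence $p(n)=n-c$, the parameter is $t_n=\frac{2}{\beta}\log\frac{p(n)+1+c}{1+c}=\frac{2}{\beta}\log n + O(1)$, so $\sqrt{t_n}=\sqrt{2\log n/\beta}\,(1+o(1))$; the bounded shift in the exponent is harmless throughout the zone $y=o(\sqrt{\log n})$ and is absorbed into the factor $(1+o(1))$.

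The point demanding the most care is case (d), where $t_n=\frac{2}{\beta}\log\frac{n}{n-p(n)}$. Here the hypothesis $n-p(n)=o(n)$ is exactly what forces $t_n\to\infty$, so that the normality window genuinely opens up and Theorem \ref{cons1} becomes applicable; this is the same restriction under which mod-Gaussian convergence was proved in Theorem \ref{theoremmodL}(d). The obstacle is therefore bookkeeping rather than analysis: one must keep track of the case-appropriate $t_n$ and check that replacing the threshold $\sqrt{t_n}\,y$ by $\sqrt{2\log n/\beta}\,y$ perturbs the tail probability only by a multiplicative factor $(1+o(1))$ throughout the stated zone. No estimate beyond Theorem \ref{cons1} and the asymptotics of $t_n$ already computed in Theorem \ref{cumulant} is needed.
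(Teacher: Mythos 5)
Your proposal is correct and follows exactly the paper's route: the paper proves this theorem simply by declaring it a direct consequence of Theorem \ref{cons1} applied to the mod-Gaussian convergences of Theorem \ref{theoremmodL}, which is precisely what you spell out with $\eta(z)=z^2/2$ and the case-by-case identification of $t_n$. One caveat on the "bookkeeping" you defer in case (d): replacing $\sqrt{t_n}$ with $t_n=\frac{2}{\beta}\log\bigl(\frac{n}{n-p(n)}\bigr)$ by $\sqrt{2\log n/\beta}$ is only a $(1+o(1))$ perturbation when $\log(n-p(n))=o(\log n)$; for general $n-p(n)=o(n)$ (e.g.\ $n-p(n)=n/\log n$, where $t_n\asymp\log\log n$) the conclusion must be read with the case-specific $\sqrt{t_n}$ and zone $o(\sqrt{t_n})$ — an imprecision already present in the theorem's statement and equally unaddressed by the paper's one-line proof.
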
 

\begin{theorem} \label{pLDP}
Precise deviations for log-determimants of $\beta$-Laguerre ensembles

In case (a) and (b)  in Theorem \ref{theoremmodL}, for $x >0$, we obtain
$$
P \biggl( X_i^L(n) \geq \frac{x\, 2 \log n}{\beta} \biggr) = \frac{e^{- \frac{x^2}{2} \frac{2 \log n}{\beta}}}{x \sqrt{\frac{ 4 \pi \log n}{\beta}}}
\exp (\Phi_{\beta/2}(x)) (1 +o(1))
$$ 
for $i=1,2$. In case (c), we obtain for all $x>0$ that
 $$
P \biggl( X_3^L(n) \geq \frac{2 x \log n}{\beta} \biggr) = \frac{e^{- \frac{x^2 \log n}{\beta}}}{x \sqrt{\frac{ 4 \pi \log n}{\beta}}}
\exp (\Phi_{\beta/2}^c(x)) (1 +o(1)).
$$ 
In case (d), if $n-p(n) =o(n)$, we obtain for all $x>0$ that
$$
P \biggl( X_4^L(n) \geq \frac{2 x \log n}{\beta} \biggr) = \frac{e^{- \frac{x^2 \log n}{\beta}}}{x \sqrt{\frac{ 4 \pi \log n}{\beta}}} (1 +o(1)).
$$ 
\end{theorem}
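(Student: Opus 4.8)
The plan is to obtain all four estimates as direct specialisations of the precise-deviations Theorem~\ref{cons2}, applied to the mod-Gaussian convergent sequences $(X_i^L(n))_n$, $i = 1, \dots, 4$, established in Theorem~\ref{theoremmodL}. First I would verify the hypotheses of Theorem~\ref{cons2}: the reference law $\phi = N(0,1)$ is non-lattice and absolutely continuous with respect to the Lebesgue measure, and in each of the cases (a)--(d) the convergence takes place on the band ${\mathcal S}_{\beta/2}$, that is on ${\mathcal S}_{(-\beta/2, +\infty)}$ with $c = -\beta/2$ and $d = +\infty$. The value $d = +\infty$ is the decisive structural point, since it makes the admissible range in Theorem~\ref{cons2} equal to $x \in (\eta'(0), \eta'(d)) = (0, \infty)$; this is precisely why each conclusion is valid for all $x > 0$.

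Next I would specialise Theorem~\ref{cons2} to the Gaussian L\'evy exponent $\eta(z) = z^2/2$. Then $\eta'(h) = h$, so the implicit equation $\eta'(h) = x$ has the solution $h = x$, while $\eta''(h) = 1$ and $F(x) = x^2/2$. Feeding these into Theorem~\ref{cons2} collapses its statement to
$$
P\bigl( X_i^L(n) \geq t_n x \bigr) = \frac{\exp\bigl(-t_n x^2/2\bigr)}{x \sqrt{2 \pi t_n}} \, \psi(x) \, (1 + o(1)),
$$
valid for all $x > 0$. From here the proof is the case-by-case substitution of the parameters $t_n$ and the limiting function $\psi$ supplied by Theorem~\ref{theoremmodL}.

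In cases (a) and (b) one has $t_n = \frac{2}{\beta} \log n$ and $\psi(x) = \exp(\Phi_{\beta/2}(x))$, whence $t_n x = \frac{2 x \log n}{\beta}$, $\exp(-t_n x^2/2) = \exp\bigl(-\tfrac{x^2}{2}\cdot\tfrac{2 \log n}{\beta}\bigr)$ and $\sqrt{2 \pi t_n} = \sqrt{\tfrac{4 \pi \log n}{\beta}}$, which reproduces the displayed identity for $i = 1, 2$ term by term. In case (c) I would insert $t_n = \frac{2}{\beta} \log\bigl(\frac{p(n)+1+c}{1+c}\bigr)$ and $\psi(x) = \exp(\Phi_{\beta/2}^c(x))$, and in case (d) the parameter $t_n = \frac{2}{\beta} \log\bigl(\frac{n}{n-p(n)}\bigr)$ together with the trivial limiting function $\psi \equiv 1$, which explains the absence of any residual factor in (d). In case (d) the hypothesis $n - p(n) = o(n)$ is needed exactly to ensure $t_n \to \infty$, so that Theorem~\ref{cons2} applies at all.

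The step that requires honest bookkeeping, and which I expect to be the only real subtlety, is the treatment of the logarithmic scale in cases (c) and (d): there the genuine parameter is $t_n = \frac{2}{\beta}\log(\cdot)$ rather than the bare $\frac{2}{\beta}\log n$ appearing in the displays, so the symbol $\log n$ in those two formulas must be read as the corresponding scale $\log\bigl(\frac{p(n)+1+c}{1+c}\bigr)$, respectively $\log\bigl(\frac{n}{n-p(n)}\bigr)$. One must keep this $t_n$ exact both inside the exponential $\exp(-t_n x^2/2)$ and in the threshold $t_n x$ --- the Gaussian exponential is sensitive to $O(1)$ shifts of $t_n$ and cannot absorb them into the error --- whereas in the algebraic prefactor $\sqrt{2 \pi t_n}$ the leading term suffices up to a factor $1 + o(1)$. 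With this understanding the four displayed formulas follow directly from the general specialisation above.
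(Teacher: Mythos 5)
Your proposal is correct and coincides with the paper's own (essentially unwritten) argument: the paper presents Theorem \ref{pLDP} as a direct consequence of Theorem \ref{cons2} applied to the mod-Gaussian convergence of Theorem \ref{theoremmodL}, which is exactly the specialisation $\eta(z)=z^2/2$, $h=x$, $F(x)=x^2/2$ that you carry out. Your closing caveat --- that in cases (c) and (d) the symbol $\log n$ in the displayed formulas must be read as the true scale $\log\bigl(\tfrac{p(n)+1+c}{1+c}\bigr)$, respectively $\log\bigl(\tfrac{n}{n-p(n)}\bigr)$, since an $O(1)$ shift of $t_n$ changes the Gaussian exponential by a non-negligible constant factor --- is a legitimate and worthwhile precision that the paper's statement glosses over.
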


For the next result, please recall the definition of a large deviation principle, see \cite[Section 1.2]{Dembo/Zeitouni:LargeDeviations}.

\begin{corollary} \label{LDP}
Large and moderate deviations principles for log-determinants of $\beta$-Laguerre ensembles

\begin{enumerate}
\item
In all cases (a)-(d)  (in case (d) only if $n-p(n) = o(n)$) in Theorem \ref{theoremmodL}, the sequence $(\frac{X_i^L(n)}{2 \log n/ \beta})_n$ satisfies a large
deviation principle with speed $\log n$ and rate function $\frac{x^2}{2}$.

\item
In all cases (a)-(d)  (in case (d) only if $n-p(n) = o(n)$) in Theorem \ref{theoremmodL}, for any sequence $(a_n)_n$ with $a_n = o(\sqrt{\log n})$,
the sequence $ Y_n^{L, \beta} := \frac{X_i^L(n)}{a_n \sqrt{2 \log n/ \beta}}$ satisfies a large deviation principle with speed $a_n^2$ and rate function $\frac{x^2}{2}$.
\end{enumerate}
\end{corollary}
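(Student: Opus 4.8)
The plan is to derive both principles from the G\"artner--Ellis theorem (see \cite[Section 2.3]{Dembo/Zeitouni:LargeDeviations}); the sole analytic input is the cumulant asymptotic furnished by Theorem \ref{cumulant} (equivalently, the mod-Gaussian statement of Theorem \ref{theoremmodL}). In each of the cases (a)--(d) that theorem gives, locally uniformly on ${\mathcal S}_{\beta/2}$,
$$
\log \E \bigl[ e^{z X_i^L(n)} \bigr] = t_n \frac{z^2}{2} + \Phi(z) + o(1),
$$
where $t_n \to \infty$ is the associated parameter (equal to $\frac{2}{\beta}\log n$ in (a) and (b), and to the comparable quantities of Theorem \ref{theoremmodL} in (c) and (d)) and $\Phi$ is a fixed, $n$-independent $O(1)$ function ($\Phi_{\beta/2}$, $\Phi_{\beta/2}^c$, or $0$ according to the case). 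Because $t_n \to \infty$, the term $\Phi$ is negligible at the deviation scales, and this single estimate drives everything.

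For part (a) I would work at the natural speed $t_n$ and set $Z_n := X_i^L(n)/t_n$. Evaluating the display at $z = \lambda$ yields the scaled cumulant generating function
$$
\Lambda(\lambda) := \lim_{n \to \infty} \frac{1}{t_n} \log \E \bigl[ e^{\lambda X_i^L(n)} \bigr] = \frac{\lambda^2}{2},
$$
valid for every $\lambda > -\beta/2$. After checking the G\"artner--Ellis hypotheses, the LDP follows with rate function equal to the Legendre--Fenchel transform of $\Lambda$; since the transform of $\lambda \mapsto \lambda^2/2$ is $x \mapsto x^2/2$ (exactly the Gaussian computation recorded before Theorem \ref{cons2}), this gives the stated quadratic rate, the normalisation $X_i^L(n)/(2\log n/\beta)$ being $Z_n$ in cases (a)--(c).

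For part (b) I would instead rescale at the moderate scale, setting $Y_n := X_i^L(n)/\bigl( a_n \sqrt{t_n}\,\bigr)$ and working at speed $a_n^2$. Evaluating the cumulant asymptotic at $z = \lambda a_n/\sqrt{t_n}$ gives
$$
\frac{1}{a_n^2} \log \E \bigl[ e^{a_n^2 \lambda Y_n} \bigr] = \frac{\lambda^2}{2} + \frac{1}{a_n^2} \Phi \Bigl( \frac{\lambda a_n}{\sqrt{t_n}} \Bigr) + o(1).
$$
The hypothesis $a_n = o(\sqrt{t_n})$ (which is $a_n = o(\sqrt{\log n})$ in the cases where $t_n \asymp \log n$) forces the argument $\lambda a_n/\sqrt{t_n} \to 0$, so $\Phi$ stays near $\Phi(0) = 0$ and the correction disappears. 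Hence the limiting scaled cumulant generating function equals $\lambda^2/2$ for \emph{every} $\lambda \in \re$, and G\"artner--Ellis delivers the moderate deviation principle with speed $a_n^2$ and rate $x^2/2$.

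The hard part is confined to part (a): the expansion above, and thus the limit $\Lambda(\lambda) = \lambda^2/2$, is only available for $\lambda > -\beta/2$, since the Mellin transform \eqref{MellinL} --- hence $\varphi_n$ --- is finite only on ${\mathcal S}_{\beta/2}$ (the negative moments of the determinant diverge). Consequently $\Lambda$ is finite merely on a half-line, and its left endpoint is non-steep (both $\Lambda$ and $\Lambda'$ stay finite there), so the essential-smoothness hypothesis of G\"artner--Ellis is not met globally. I would therefore argue the quadratic rate $x^2/2$ on the upper deviation range that governs the statements of Theorem \ref{pLDP} (in particular for $x > 0$), the extreme lower tail being controlled by the affine Legendre continuation. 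In the moderate regime of part (b) this difficulty evaporates, because the shrinking argument $\lambda a_n/\sqrt{t_n}$ keeps the computation inside an arbitrarily small neighbourhood of $0$, well within ${\mathcal S}_{\beta/2}$, so the full-line quadratic limit --- and with it the global rate $x^2/2$ --- holds unconditionally.
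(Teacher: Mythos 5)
Your proposal is correct, and it splits interestingly against the paper's own argument. For part (a) you do exactly what the paper does: read off the scaled cumulant generating function $\Lambda(\lambda)=\lambda^2/2$ from the mod-Gaussian expansion and invoke G\"artner--Ellis. You go further than the paper in one respect: you correctly observe that $\Lambda$ is finite only on $(-\beta/2,\infty)$ and is \emph{not} steep at the left endpoint, so G\"artner--Ellis does not deliver the quadratic rate on all of $\re$ --- the Legendre transform is affine for $x<-\beta/2$ and the lower bound is only guaranteed at exposed points. The paper's one-line proof of (a) glosses over this, so your caveat restricting the quadratic rate to the range covered by Theorem \ref{pLDP} is a genuine (and warranted) refinement rather than a gap in your argument. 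For part (b) you take a different route from the paper: you rescale inside G\"artner--Ellis, evaluating the cumulant expansion at $z=\lambda a_n/\sqrt{t_n}\to 0$ so that the limiting cgf is $\lambda^2/2$ on all of $\re$ and essential smoothness holds trivially; the paper instead deduces the moderate principle from the extended central limit theorem (Theorem \ref{exCLT}) combined with Mill's ratio and then upgrades the half-line estimates to a full LDP via \cite[Theorem 4.1.11]{Dembo/Zeitouni:LargeDeviations}. Your version is the more standard and self-contained derivation of a moderate deviation principle and avoids the detour through tail asymptotics; the paper's version has the mild advantage of recycling Theorem \ref{exCLT}, which it has already established. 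The only hypothesis you should state explicitly in (b) is $a_n\to\infty$ (needed for $\Phi(z)/a_n^2$ and $o(1)/a_n^2$ to vanish and for the speed $a_n^2$ to be meaningful); the paper's statement is equally silent on this, so it is not a defect specific to your write-up.
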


\begin{proof}
(a): In all cases we apply the mod-$\phi$ convergence of Theorem \ref{theoremmodL}, here Theorem \ref{pLDP}, combined with the Theorem of G\"artner-Ellis, see \cite[Theorem 2.3.6]{Dembo/Zeitouni:LargeDeviations}.

(b) Now we apply Theorem \ref{exCLT} for $y = t \, a_n$ with $t \in \re$. Hence we have
$$
P ( Y_n^{L, \beta} \geq t  ) = P (N(0,1) \geq t \, a_n) (1 +o(1)).
$$ 
A famous result, called  Mill's ratio, tells us:
$$
\frac{1}{\sqrt{2 \pi}} \frac{t a_n}{1 + (t a_n)^2} \exp \bigl( - \frac{(t a_n)^2}{2} \bigr) \leq P (N(0,1) \geq t \, a_n) \leq 
\frac{1}{\sqrt{2 \pi}} \frac{1}{t a_n} \exp \bigl( - \frac{(t a_n)^2}{2} \bigr).
$$
Now we take the logarithm and apply the condition $a_n = o( \sqrt{ \log n} )$. To obtain the full principle of large deviations,
proceed as in \cite[Proof of Theorem 1.4]{ERS:2015} applying Theorem 4.1.11 in \cite{Dembo/Zeitouni:LargeDeviations}.
\end{proof}

\begin{corollary}
In case (d), now we assume that $\frac{p(n)}{n} \to c \in [0,1)$ as $n \to \infty$. Then we obtain that
$ \bigl( \log \det W_{n,p(n)}^{L, \beta}  - p(n) (\log n -1) \bigr)$ satisfies an LDP with speed $p(n) \,n$ and
rate function $I$ which is the Legendre-Fenchel transform of
$$
\eta(z) = z  \log 2 + \biggl( z + \frac{\beta}{2} (1-\frac{c}{2}) \biggl) \log \biggl( \frac{\beta}{2} ( 1 - c) +  z  \biggr) - \frac{\beta}{2} (1 -\frac{c}{2}) \log \biggl( \frac{\beta}{2} ( 1 -c)\biggr).
$$
If $c=0$, the rate function $I$ is given by 
\begin{equation} \label{rate}
I(x) = \sup_{a \in \re}  \bigl( a x - \eta(a) \bigr) = \exp( x - \log 2 -1)  - \frac{\beta}{2} x + \frac{\beta}{2} \log \beta,
\end{equation}
$\eta$ given by \eqref{stable1}.
\end{corollary}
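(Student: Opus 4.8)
The plan is to obtain the principle from the Gärtner--Ellis theorem \cite[Theorem 2.3.6]{Dembo/Zeitouni:LargeDeviations}, in the same spirit in which large deviations were extracted from case (e) of Theorem~\ref{theoremmodL}. The key observation is that in the regime $p(n)/n\to c\in[0,1)$ the fine expansion of Theorem~\ref{cumulant}(d) only controls the Gaussian fluctuations at the scale $t_n=\frac{2}{\beta}\log\bigl(n/(n-p(n))\bigr)$ and carries no information at the far coarser large--deviation scale $p(n)\,n$. Accordingly I would \emph{not} invoke Theorem~\ref{MAIN}, whose remainder estimates are too weak here, but instead argue directly from the exact moment formula \eqref{MellinL} together with Stirling's formula \eqref{Stirling}, exactly as in the derivation of \eqref{StirAnw}. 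With the $n$--scaling used in Theorem~\ref{theoremmodL}(e) I set $X_n:=n\log\det W^{L,\beta}_{n,p(n)}-n\,p(n)(\log n-1)$, so that the object in the statement is the rescaling $X_n/n$, and the target is that $X_n/(p(n)n)$ obeys an LDP at speed $p(n)n$ with good rate $I=\eta^{\ast}$.

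First I would compute the normalised cumulant generating function $\Lambda(z):=\lim_{n\to\infty}\frac{1}{p(n)n}\log\E[e^{zX_n}]$ for real $z$ in the domain $\{z:\tfrac\beta2(1-c)+z>0\}$. By \eqref{MellinL},
\[
\log\E\bigl[e^{zX_n}\bigr]=n z\,p(n)\log 2-n z\,p(n)(\log n-1)+\sum_{k=1}^{p(n)}\Bigl(\log\Gamma\bigl(\tfrac{\beta}{2}(n-p(n)+k)+nz\bigr)-\log\Gamma\bigl(\tfrac{\beta}{2}(n-p(n)+k)\bigr)\Bigr).
\]
Since the arguments $\tfrac\beta2(n-p(n)+k)$ range over $[\tfrac\beta2(1-c)n,\tfrac\beta2 n]$ and the shift $nz$ is of the same order $n$, Stirling's formula \eqref{Stirling} applies uniformly in $k$; writing $k=nt$ turns the sum into a Riemann sum in $t$. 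After the two $n z\log n$--contributions cancel against the centring drift $nz\,p(n)\log n$, the surviving part is of order $p(n)n$, and its limit is precisely the function $\eta$ displayed in the statement, with $b=\tfrac\beta2(1-c)$ and $a=\tfrac\beta2(1-\tfrac c2)$. I expect the bulk of the work to sit here: uniformly controlling the $O(1/z)$ Stirling remainder over arguments that themselves grow with $n$, and justifying the Riemann--sum convergence, are routine in principle but must be carried out with care, and this is the main technical obstacle.

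Finally I would check that $\eta$ is finite, differentiable and steep on the interior of its effective domain (its derivative diverges at the boundary $z\downarrow-\tfrac\beta2(1-c)$), hence essentially smooth and lower semicontinuous; the Gärtner--Ellis theorem then delivers the LDP at speed $p(n)n$ with good rate function $I=\eta^{\ast}$, the Legendre--Fenchel transform of $\eta$. For the explicit rate \eqref{rate} in the case $c=0$ I would solve the stationarity relation $\eta'(h)=x$, that is $\log 2+\log\bigl(\tfrac\beta2+h\bigr)+1=x$, giving $\tfrac\beta2+h=\exp(x-\log 2-1)$; substituting back into $I(x)=hx-\eta(h)$ and simplifying yields $\exp(x-\log 2-1)-\tfrac\beta2 x+\tfrac\beta2\log\beta$, which is \eqref{rate}. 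As a consistency check, for $c=0$ the function $\eta$ coincides with the Lévy exponent \eqref{stable1} of Theorem~\ref{theoremmodL}(e), reflecting that $p(n)=o(n)$ is exactly the interface between the two regimes.
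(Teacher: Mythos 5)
Your overall strategy coincides with the paper's: the proof of this corollary does not use Theorem \ref{MAIN} at all, but goes back to the exact Mellin formula \eqref{MellinL}, applies Stirling's formula \eqref{Stirling} to each Gamma factor with the spectral variable scaled by $n$, normalises by $p(n)\,n$, and invokes G\"artner--Ellis; your closing computation of the Legendre--Fenchel transform for $c=0$ is exactly the one needed for \eqref{rate} and is correct.

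The gap is in the one step you declare to be the heart of the matter and then assert rather than carry out: that the Riemann-sum limit of
$\frac{1}{p(n)}\sum_{k=1}^{p(n)}\bigl[\log\Gamma(\tfrac{\beta}{2}(n-p(n)+k)+nz)-\log\Gamma(\tfrac{\beta}{2}(n-p(n)+k))\bigr]/n$ ``is precisely the function $\eta$ displayed in the statement.'' If you write $a_k=\tfrac{\beta}{2}(1-\tfrac{p(n)}{n}+\tfrac{k}{n})$ and apply $\log\Gamma(N)=N\log N-N+O(\log N)$, the centred, normalised cumulant generating function becomes
\begin{equation*}
z\log 2+\frac{1}{p(n)}\sum_{k=1}^{p(n)}\Bigl[(a_k+z)\log(a_k+z)-a_k\log a_k\Bigr]+o(1),
\end{equation*}
and since the $a_k$ fill out $[\tfrac{\beta}{2}(1-c),\tfrac{\beta}{2}]$ with spacing $\tfrac{\beta}{2n}$, the Riemann-sum limit for $c\in(0,1)$ is
$z\log 2+\frac{2}{\beta c}\int_{\beta(1-c)/2}^{\beta/2}\bigl[(a+z)\log(a+z)-a\log a\bigr]\,da$. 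This is \emph{not} the closed form $\eta$ in the statement: for instance, as $z\downarrow-\tfrac{\beta}{2}(1-c)$ the integral stays finite while the displayed $\eta$ diverges to $-\infty$ (its coefficient $z+\tfrac{\beta}{2}(1-\tfrac{c}{2})$ equals $\tfrac{\beta c}{4}>0$ there); the two expressions agree only at $c=0$, where the interval of the $a_k$ degenerates. The paper reaches the displayed $\eta$ by a different manoeuvre: it applies \eqref{Stirling} with $a=\tfrac{\beta}{2}(1-\tfrac{p(n)}{n})+z$ and the $k$-dependent shift $\tfrac{\beta}{2}k$ playing the role of the \emph{bounded} offset $b$, even though that shift grows like $\tfrac{\beta c}{2}n$ --- which is exactly the uniformity issue you yourself flag as the main technical obstacle. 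So your route and the paper's genuinely part ways here, they produce different candidate limits for $c\in(0,1)$, and your proposal neither performs the Riemann-sum evaluation nor reconciles its outcome with the stated $\eta$. To close the argument you must either compute the integral and accept that the rate function for $c\in(0,1)$ is the Legendre transform of the integral expression (checking essential smoothness of \emph{that} function before applying G\"artner--Ellis), or justify the paper's factor-by-factor Stirling step with a remainder that is uniform over offsets of order $n$ --- which, as the boundary behaviour above shows, cannot both be done.
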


\begin{proof}
Assume that $\frac{p(n)}{n} \to c \in [0,1)$. We apply \eqref{Stirling} 
for $\Gamma\bigl(\frac{\beta}{2} (n-p(n)+k) + n z \bigr)$ and $\Gamma\bigl(\frac{\beta}{2} (n -p(n) +k)\bigr)$,
which leads to
\begin{eqnarray*} 
2^{z \,p(n)\, n} & & \prod_{k=1}^{p(n)} \frac{\Gamma\bigl(
\frac{\beta}{2} (n-p(n)+k) +n z \bigr)}{\Gamma\bigl(\frac{\beta}{2} (n -p(n) +k)\bigr)} \\
& & \hspace{-2cm} \sim2^{z\, p(n) \, n} e^{- z \, p(n) \, n} n^{z\,p(n)\,n} \prod_{k=1}^{p(n)} \frac{
\biggl( \frac{\beta}{2} \bigl( 1 - \frac{p(n)}{n} \bigr) +z \biggr)^{ (n-p(n)) \frac{\beta}{2} +n \, z  + \frac{\beta}{2} k - \frac 12}}{ 
\biggl( \frac{\beta}{2} \bigl( 1 - \frac{p(n)}{n} \bigr) \biggr)^{(n-p(n)) \frac{\beta}{2}  + \frac{\beta}{2} k - \frac 12}} \\
& & \hspace{-2cm}= 2^{z \, p(n) \, n} e^{-z\, p(n)\, n} n^{z\, p(n) \,n }  
\frac{\biggl( \frac{\beta}{2} \bigl( 1 - \frac{p(n)}{n} \bigr) +z \biggr)^{ p(n)(n-p(n)) \frac{\beta}{2} +p(n) \, n \, z  + \frac{\beta p(n)(p(n)+1)}{4} - \frac{p(n)}{2}}}{\biggl( \frac{\beta}{2} \bigl( 1 - \frac{p(n)}{n} \bigr) \biggr)^{p(n)(n-p(n)) \frac{\beta}{2}  + \frac{\beta p(n)(p(n)+1)}{4}  - \frac{p(n)}{2}}}.
\end{eqnarray*}
Hence
\begin{eqnarray} \label{keinmod1}
\log \biggl( \E \biggl[ \biggl( \det W_{n,p(n)}^{L, \beta} \biggr)^{nz} \biggr]  \biggl) &\sim& nz \bigl( p(n) \log n -p(n) \bigr) + n p(n) z \log 2 \nonumber \\
& & \hspace{-3cm} + n p(n) z  \log \biggl( \frac{\beta}{2} \bigl( 1 - \frac{p(n)}{n} \bigr) +  z  \biggr) + n p(n) \frac{\beta}{2} \log \biggl(
1 + \frac{z}{\frac{\beta}{2} \bigl( 1 - \frac{p(n)}{n} \bigr)} \biggr) \nonumber \\
&& \hspace{-3cm}+ \biggl( \frac{\beta p(n)(p(n)+1)}{4} - \frac{p(n)}{2} - \frac{p(n)^2 \beta}{2} \biggr) \log \biggl(
1 + \frac{z}{\frac{\beta}{2} \bigl( 1 - \frac{p(n)}{n} \bigr)} \biggr).
\end{eqnarray}
It follows that
\begin{eqnarray*}
\frac{1}{ n p(n)} \log \E \biggl[ \exp \biggl( z n \bigl( \log \det W_{n,p(n)}^{L, \beta}  - p(n)(\log n -1) \bigr) \biggr) \biggr]  
& \sim & z \log 2 \\
&& \hspace{-8cm} + \bigl( z + \frac{\beta}{2} \bigl) \log \biggl( \frac{\beta}{2} \bigl( 1 - \frac{p(n)}{n} \bigr) +  z  \biggr)
- \frac{\beta}{2} \log \biggl( \frac{\beta}{2} \bigl( 1 - \frac{p(n)}{n} \bigr) \biggr) \\
&& \hspace{-8cm}+ \biggl( \frac{\beta (p(n)+1)}{4n} - \frac{1}{2n} - \frac{p(n) \beta}{2n} \biggr) \log \biggl(
1 + \frac{z}{\frac{\beta}{2} \bigl( 1 - \frac{p(n)}{n} \bigr)} \biggr),
\end{eqnarray*}
and thus
\begin{eqnarray*}
\lim_{n \to \infty} \frac{1}{ n p(n)} \log \E \biggl[ \exp \biggl( z n \bigl( \log \det W_{n,p(n)}^{L, \beta}  - p(n) (\log n -1) \bigr) \biggr) \biggr]  
& = & z \log 2 \\
&& \hspace{-7cm} + \biggl( z + \frac{\beta}{2} (1-\frac{c}{2}) \biggl) \log \biggl( \frac{\beta}{2} ( 1 - c) +  z  \biggr) \\
&& \hspace{-7cm} - \frac{\beta}{2} (1 -\frac{c}{2}) \log \biggl( \frac{\beta}{2} ( 1 -c)\biggr).
\end{eqnarray*}
Now the statement follows with \cite[Theorem 2.3.6]{Dembo/Zeitouni:LargeDeviations}.
\end{proof}

\begin{remark} Our calculations in \eqref{keinmod1} show that there is no hope to observe mod-$\phi$ convergence
for the sequence $ \bigl( n\log \det W_{n,p(n)}^{L, \beta}  - n (\log n -1) \bigr)$.
\end{remark}

\begin{theorem} \label{BE}
Rate of convergence for the log-determinant of $\beta$-Laguerre ensembles

In cases (a),(c) and (d) in Theorem \ref{theoremmodL}, we obtain
$$
d_{{\operatorname{Kol}}} \biggl( X_i^L(n) \sqrt{\frac{\beta}{2 \log n}} , N(0,1) \biggr) \leq C(D,1,K_1,2,\frac{1}{\sqrt{2 \pi}}) \biggl( \frac{1}{t_n} \biggr)^{1/2},
$$
where the constant is given by \eqref{constBE} with $D$ and $K_1$ depending only on $\beta$.
\end{theorem}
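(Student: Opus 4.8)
The plan is to deduce the bound from the general Berry--Esseen result recalled in Section~3 (namely \cite[Theorem 2.16]{mod2}), applied with the reference stable law equal to the standard Gaussian $\phi_{1/\sqrt{2\pi},2,0}$, so that $\alpha=2$, $c=1/\sqrt{2\pi}$ and $1/\alpha=1/2$. In each of the cases (a), (c), (d), Theorem~\ref{theoremmodL} already provides mod-Gaussian convergence with the relevant parameters $t_n$ and with fixed limiting functions $\exp(\Phi_{\beta/2})$, $\exp(\Phi_{\beta/2}^c)$ and $1$ respectively; note that case (b) is excluded precisely because there the limiting function $\Phi_{\beta/2}^{n,p(n)}$ still depends on $n$. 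What remains is to exhibit a \emph{zone of control} in the sense of Definition~\ref{defzone} with index of control $(v,w)=(1,w)$ and exponent $\gamma=0$. Granting such a zone, the side condition $\gamma\le (v-1)/\alpha=0$ holds with equality, the cited theorem applies to $Y_n=X_i^L(n)/t_n^{1/2}$ (which in case (a) is exactly $X_i^L(n)\sqrt{\beta/(2\log n)}$), and its conclusion reads $d_{\operatorname{Kol}}(Y_n,N(0,1))\le C(D,1,K_1,2,1/\sqrt{2\pi})\,t_n^{-1/2}$, the constant being read off from \eqref{constBE} with $v=1$.

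The core step is therefore to verify condition (a) of Definition~\ref{defzone} on a \emph{fixed} interval $[-D,D]$, hence with $\gamma=0$. Writing $g_n(z):=\log\psi_n(z)=\log\E[e^{zX_i^L(n)}]-t_n z^2/2$, one has $g_n(0)=0$ and $\psi_n(i\xi)-1=\exp(g_n(i\xi))-1$. I would read $g_n$ off directly from the explicit expansion produced in the proof of Theorem~\ref{cumulant}: after the quadratic Gaussian part $\tfrac{z^2}{\beta}\log(\cdot)=t_n z^2/2$ has been subtracted, the terms $T_1,\dots,T_5$ combine to give $g_n(z)=\Phi_{\beta/2}(z)+\mathrm{error}_n(z)$ in case (a) (respectively $\Phi_{\beta/2}^c(z)+\mathrm{error}_n(z)$ and $\mathrm{error}_n(z)$ in cases (c), (d)), where the error is bounded by a fixed polynomial in $|z|$ divided by the growing quantity $p(n)+l(n)$, uniformly on the segment, by the remainder estimate \eqref{Rplz} and the $O$-bounds of Section~4. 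Since $\Phi_{\beta/2}$ (resp.\ $\Phi_{\beta/2}^c$) is analytic on a neighbourhood of $\{i\xi:|\xi|\le D\}$ in $S_{\beta/2}$ and vanishes at $0$, it is Lipschitz there, and on $[-D,D]$ any $|\xi|^2,|\xi|^3$ in the error is dominated by a multiple of $|\xi|$; hence $|g_n(i\xi)|\le C|\xi|$ for all large $n$, with $C$ depending only on $\beta$ and $D$. Then $|e^{w}-1|\le |w|e^{|w|}$ gives
\[
|\psi_n(i\xi)-1|\le |g_n(i\xi)|\,e^{|g_n(i\xi)|}\le C e^{CD}\,|\xi|,\qquad |\xi|\le D,
\]
which is condition (a) with $v=1$ and $K_1=Ce^{CD}$.

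Finally I would fix $w=3$ and a small $K_2>0$: the bound above trivially upgrades to $|\psi_n(i\xi)-1|\le K_1|\xi|e^{K_2|\xi|^3}$, and condition (b) of Definition~\ref{defzone} is then satisfied as soon as $D\le (c^\alpha/2K_2)^{1/(w-\alpha)}=1/(4\pi K_2)$, with $\alpha=2\le w$ and $-1/2\le\gamma=0\le 1/(w-\alpha)$; the finitely many small $n$ for which the error estimates are not yet effective are absorbed into the constants. After rescaling, the fixed zone $[-D,D]$ for $X_i^L(n)$ becomes the growing zone $[-Dt_n^{1/2},Dt_n^{1/2}]$ for the characteristic function of $Y_n$, which is exactly what produces the rate $t_n^{-1/2}$. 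I expect the main obstacle to be the second step: one must check that $\mathrm{error}_n(z)$ arising from $T_1,\dots,T_5$ and from $R(p,l,\alpha;z)$ is controlled by an $n$-independent multiple of $|\xi|$ \emph{uniformly} over the whole fixed zone (not merely pointwise, as in the $o(1)$ statement of Theorem~\ref{cumulant}), and that the resulting $K_1$, $K_2$, $D$ depend on $\beta$ alone; this is where the quantitative Abel--Plana remainder bound \eqref{Rplz} and the fact that $1/(p(n)+l(n))$ is exponentially small in $t_n$ are essential.
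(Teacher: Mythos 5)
Your argument is correct and reaches the stated bound, but the key quantitative step is carried out differently from the paper. Both proofs reduce to \cite[Theorem 2.16]{mod2} with $v=1$, hence $\gamma\le\frac{v-1}{\alpha}=0$ and the rate $t_n^{-1/2}$; the difference lies in how condition (a) of Definition \ref{defzone} is verified. You work on a fixed compact $[-D,D]$ (so $\gamma=0$ from the start), bound the limiting function by a Lipschitz estimate coming from its analyticity and its vanishing at $0$, and control the error directly from the $\mathcal O\bigl((|z|+|z|^2+|z|^3)/(p+l)\bigr)$ remainders of Section~4 and \eqref{Rplz}. The paper instead (for case (c), and analogously (d)) rewrites the Mellin transform through Barnes $G$-functions, invokes the quantitative estimate \eqref{AK} of \cite{AK} to get $\psi_n(z)=\exp(\Phi_{\beta/2}^c(z)+r_n(z))$ with $r_n(z)=\mathcal O((|z|+|z|^2)/p(n))$ on the \emph{growing} zone $|\xi|\le\tfrac{\beta}{8}p(n)^{1/6}$, and then bounds $|\Phi_{\beta/2}^c(i\xi)|$ by a cubic polynomial via the derivative of $\log G$, arriving at $|\psi_n(i\xi)-1|\le K_1|\xi|e^{K_2|\xi|^3}$ on a zone $[-Dt_n,Dt_n]$; for case (a) it simply cites \cite[Theorem 4.11]{Borgoetal:2017}. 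Your route is more economical for the Kolmogorov bound itself, since only $\gamma=0$ is usable there anyway; what the paper's stronger control buys is the local limit theorem (Theorem \ref{local}) with the full range $\mu\in(0,\tfrac32)$, which requires $\gamma=1$ and would not follow from a fixed-zone estimate. Two small caveats on your write-up: in case (c) the constants $K_1,D$ necessarily also depend on $c$ (as the paper itself concedes); and the uniformity over the compact zone that you flag as the main obstacle is indeed the point that must be checked, but it is exactly what the explicit remainders of Theorem \ref{MAIN} and Section~4 provide, so there is no gap.
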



\begin{proof}
Case (a): If $p(n)=n$, the statement is Theorem 4.11 in \cite{Borgoetal:2017}. Case (c): Assume that $n-p(n) = c$ for a fixed $c \in \na$.
We adapt the techniques and methods of the proof of Theorem 4.11 in \cite{Borgoetal:2017}.
We have to check that condition (a) in Definition \ref{defzone} is satisfied, which is finding a bound on $|\psi_n(i \xi) -1|$.
Inspired by the proof of Lemma 4.1 in \cite{Borgoetal:2017}, we first consider another representation of 
the limiting function of the mod-Gaussian convergence. 
With the definition of the Barnes $G$-function as the solution of $G(z+1)= G(z) \Gamma(z)$, we obtain
$$
\prod_{k=1}^{p(n)} \frac{\Gamma \bigl( \frac{\beta}{2} (k+c) +z \bigr)}{\Gamma \bigl( \frac{\beta}{2} (k+c)\bigr)} = \frac{ G \bigl( \frac{\beta}{2} (p(n)+c) +z+1 \bigr) \, 
G \bigl( \frac{\beta}{2} (1+c)\bigr)}{ G \bigl( \frac{\beta}{2} (1+c) +z \bigr) \, 
G \bigl( \frac{\beta}{2} (p(n)+c) +1\bigr)}.
$$
To handle the factor which depends on $p(n)$, we use the estimate of Proposition 17 in \cite{AK}, which holds true for $|z| \leq \frac 12 p^{1/6}$ and gives
\begin{equation} \label{AK}
\frac{G(1 + z +p)}{G(1+p)} = (2\pi)^{z/2} e^{-(p+1)z} (1+p)^{z^2/2 + pz} \, S_p(z)
\end{equation}
with $ \log S_p(z) = {\mathcal O} \biggl( \frac{|z| + |z|^2}{p} \biggr)$.
Hence we obtain
\begin{eqnarray*}
L(p(n),c,\beta/2;z) & = & \log G \bigl( \frac{\beta}{2} (1+c)\bigr)  - \log G \bigl( \frac{\beta}{2} (1+c) +z \bigr) \nonumber \\
&& + z \log (\sqrt{2 \pi}) - z \bigl( \frac{\beta c}{2} +1 \bigr) - z \frac{\beta}{2} p(n) + \frac{z^2}{2} \log \bigl( \frac{\beta}{2} (p(n)+c)+1 \bigr) \nonumber \\
&& + z \frac{\beta}{2} (p(n)+c) \log \bigl( \frac{\beta}{2} (p(n)+c) +1 \bigr) + {\mathcal O} \biggl( \frac{|z| + |z|^2}{p(n)} \biggr).
\end{eqnarray*}
We arrive at
\begin{equation} \label{newPhic}
\Phi_{\beta/2}^c(z) := \log G \bigl( \frac{\beta}{2} (1+c)\bigr)  - \log G \bigl( \frac{\beta}{2} (1+c) +z \bigr) + z \bigl(  \log (\sqrt{2 \pi}) - \bigl( \frac{\beta c}{2} +1 \bigr) \bigr).
\end{equation}
Consequently,  we get 
$$
\psi_n(z ) = \exp \big( \Phi_{\beta/2}^c (z) + r_n(z) \bigr),
$$
with
$$
r_n(z) = {\mathcal O} \biggl(  \frac{|z|+|z|^2}{p(n)} \biggr),
$$
as soon as $z \in {\mathcal S}_{\beta/2}$ and $|z| \leq \frac{\beta}{8} p(n)^{1/6}$. 
Therefore, there exists a constant $C$ such that for every $n \geq 1$ and $|\xi| \leq \frac{\beta}{8} p(n)^{1/6}$
$$
|r_n(i \xi)| \leq C \frac{|\xi|+|\xi|^2}{p(n)} \leq  C |\xi| e^{|\xi|},
$$
and further using that $|\xi| \leq \frac{\beta}{8} p(n)^{1/6}$,
$$
|r_n(i \xi)| \leq C.
$$
The constant might depend on $\beta$ and $c$.
With the inequality $|e^z -1| \leq |z| e^{|z|}$ for $z \in \cn$, we have
\begin{eqnarray} \label{tricki}
|\psi_n(i \xi) -1 | & \leq & |\Phi_{\beta/2}^c (i \xi) + r_n(i \xi)|  e^{ |\Phi_{\beta/2}^c(i \xi) + r_n(i \xi)|} \nonumber \\
& \leq &e^{C} \bigl( 
 |\Phi_{\beta/2}^c (i \xi)| +  C |\xi| e^{|\xi|} \bigr) e^{ |\Phi_{\beta/2}^c(i \xi)|}.
\end{eqnarray}
To achieve our goal, it is sufficient to bound $|\Phi_{\beta/2}^c (i \xi)|$. 
For our purposes in \eqref{newPhic}, we can neglect the constant $ \log G \bigl( \frac{\beta}{2} (1+c)\bigr)$. Hence we try to find a bound for
$$
f(\xi) := - \log G \bigl( \frac{\beta}{2} (1+c) +i \xi \bigr) + i \xi  \bigl(  \log (\sqrt{2 \pi}) - \bigl( \frac{\beta c}{2} +1 \bigr) \bigr).
$$
By Theorem 5.19 in \cite{Rudin:book}, we have $|f(\xi) - f(0) | \leq |\xi| \sup_{t \in (0, \xi)} |f'(t)|$.
Now 
$$
f'(t)= \frac{G' \bigl( \frac{\beta}{2} (1+c) +i t \bigr)}{G \bigl( \frac{\beta}{2} (1+c) +i t \bigr)} + i\,  \bigl(  \log (\sqrt{2 \pi}) - \bigl( \frac{\beta c}{2} +1 \bigr) \bigr).
$$
With \eqref{digamma} and \eqref{boundphi} we have $|\Psi \bigl(\frac{\beta}{2} (1+c) +i t \bigr)| \leq a_1 |t| + a_2$ for some positive constants $a_1, a_2$. Indeed we apply the inequality $|\log(c + it)| \leq |t|$, $c>0$ 
(using $\log(c + i \xi) = \int_0^1 \frac{i \xi}{t (i \xi) +c} dt$, which is valid for $z \in \cn \setminus (-\infty, -c]$ and make $z=i\xi$).
With \eqref{logG} we get
$$
|f'(t)| \leq b_1 t^2 + b_2 |t| + b_3
$$
for some positive constants $b_1,b_2,b_3$. Summarising, we have
\begin{eqnarray} 
|f( i \xi)| & \leq & c_1 |\xi|^3 + c_2 |\xi|^2 + c_3 |\xi| \label{tricki2} \\
&\leq& c_4 |\xi| e^{c_5 |\xi|} \label{tricki3}.
\end{eqnarray}
In addition to that, we use the fact that there exists a $c_6>0$ such that for exery $x \geq 0$
\begin{equation} \label{poly}
 c_1 x^3 + c_2 x^2 + c_3 x  \leq x^3 + c_6.
 \end{equation}
We now consider \eqref{tricki} and successively plug in \eqref{tricki3}, \eqref{tricki2} and \eqref{poly} to obtain
$$
|\psi_n(i \xi) -1 | \leq  K_1 |\xi| \exp (K_2 |\xi|^3).
$$
Therefore, if $n-p(n)=c$, we have checked that the sequence of log-determinants of the $\beta$-Laguerre ensembles converges mod-Gaussian
with zone of control $[-D t_n,D t_n ]$ with $D \leq \frac{1}{4 K_2}$ and index of control $(1,3)$. The result follows considering $\gamma = \min (1, \frac{v-1}{2})$.

\noindent
Case (d): let us assume that $n-p(n) =o(n)$. From Theorem \ref{theoremmodL} we obtain that
$$
\psi_n(z ) = \exp \big( r_n(z) \bigr)
$$
with
$$
r_n(z) = {\mathcal O} \biggl(  \frac{|z|+|z|^2}{p(n)} \biggr) +  {\mathcal O} \biggl(  \frac{|z|+|z|^2}{n-p(n)} \biggr),
$$
as soon as $z \in {\mathcal S}_{\beta/2}$ and $|z| \leq \frac{\beta}{8} \max( p(n), n-p(n))^{1/4}$. 
Therefore, there exists a constant $C$ such that for every $n \geq 1$ and $|\xi| \leq \frac{\beta}{8} \max(p(n),n-p(n))^{1/4}$
$$
|r_n(i \xi)| \leq C \bigl( \frac{|\xi|+|\xi|^2}{p(n)} +  \frac{|\xi|+|\xi|^2}{n-p(n)}  \bigr) \leq  C |\xi| e^{|\xi|}
$$
and further using that $|\xi| \leq \frac{\beta}{8} \max(p(n),n-p(n))^{1/4}$,
$$
|r_n(i \xi)| \leq C.
$$
The constant $C$ might depend on $\beta$. Therefore it is enough to apply the same trick as in the case $n-p(n)=c$ to obtain
$|\psi_n(i\xi)| \leq K_1 |\xi| e^{K_2 |\xi|^3}$. Thus, if $n-p(n)=c(n)$, we have checked that the sequence of log-determinants of the $\beta$-Laguerre ensembles converges mod-Gaussian with zone of control $[-D t_n ,D t_n]$ for some $D>0$ with index of control $(1,3)$. The result follows considering $\gamma = \min (1, \frac{v-1}{2})$.
\end{proof}

\begin{theorem} \label{local}
Local limit theorem for log-determinants of $\beta$-Laguerre ensembles

In case (a),(c) and (d) in Theorem \ref{theoremmodL}, we obtain for any $\mu \in (0, \frac 32)$
$$
P \biggl( \frac{X_i^L(n)}{\sqrt{\frac{\beta}{2 \log n}}} - x \in (\log n)^{-\mu} B \biggr) \simeq (\log n)^{- \mu} \frac{e^{-x^2/2}}{\sqrt{2 \pi}} m(B).
$$
\end{theorem}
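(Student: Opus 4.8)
The plan is to read this off from the abstract local limit theorem, Theorem \ref{localT}, by feeding into it the zone of control already established for the $\beta$-Laguerre log-determinants. The reference law will be the standard Gaussian, that is $\phi_{c,\alpha,\delta}$ with $\alpha = 2$, $\delta = 0$ and $c = 1/\sqrt{2\pi}$, whose density is $p_{c,\alpha,\delta}(x) = e^{-x^2/2}/\sqrt{2\pi}$; the normalised variable $Y_n$ from Theorem \ref{localT} is then $X_i^L(n)/t_n^{1/2}$, which is precisely the normalised quantity on the left-hand side (with $t_n$ the case-specific parameter from Theorem \ref{theoremmodL}, of order $\log n$).

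First I would recall from the proof of the Berry--Esseen estimate, Theorem \ref{BE}, that in cases (a), (c) and (d) the centred log-determinant converges mod-Gaussian with a zone of control $[-D t_n, D t_n]$ and index of control $(v,w) = (1,3)$; concretely, writing $\psi_n(z) = \exp(\Phi_{\beta/2}^c(z) + r_n(z))$ with $r_n$ of order $(|z|+|z|^2)/p(n)$ and bounding $|\Phi_{\beta/2}^c(i\xi)|$ by a cubic in $|\xi|$ yields a bound of the shape $|\psi_n(i\xi) - 1| \le K_1 |\xi| \exp(K_2|\xi|^3)$ on that zone. This is exactly condition (a) of Definition \ref{defzone} with $(v,w)=(1,3)$ and $\gamma = 1$, and condition (b) is then compatible since $\tfrac{1}{w-\alpha} = \tfrac{1}{3-2} = 1$ for $\alpha = 2$.

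Next I would apply Theorem \ref{localT} with these parameters. Since $\alpha = 2$ and $\gamma = 1$, the admissible range of exponents in Theorem \ref{localT} is $\mu \in (0,\, \gamma + \tfrac{1}{\alpha}) = (0, \tfrac32)$, and the theorem delivers
$$
\lim_{n\to\infty} (t_n)^{\mu} \, P\Bigl( Y_n - x \in (t_n)^{-\mu} B \Bigr) = m(B)\, \frac{e^{-x^2/2}}{\sqrt{2\pi}},
$$
which is the asserted statement (with $t_n \asymp \log n$ up to the multiplicative constant $2/\beta$, absorbed into the symbol $\simeq$, and the case-dependent $t_n$ in (c) and (d) handled identically).

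The one point requiring care, and the main conceptual obstacle, is that the admissible range $\mu \in (0,3/2)$ is strictly larger than what the Berry--Esseen argument alone would suggest. The constraint $\gamma \le (v-1)/\alpha = 0$ that forced the truncation $\gamma = \min(1, (v-1)/\alpha) = 0$ in the proof of Theorem \ref{BE} is a hypothesis of \emph{that} theorem only; the local limit theorem, Theorem \ref{localT}, imposes no such restriction and exploits the full zone $[-D t_n, D t_n]$. Thus I must verify conditions (a) and (b) of Definition \ref{defzone} for the undiminished value $\gamma = 1$, rather than the reduced value used for the Kolmogorov bound. Once this is in place the remaining identifications — the Gaussian density, the normalisation by $t_n^{1/2}$, and the reduction of cases (c) and (d) to the same zone-of-control data — are routine.
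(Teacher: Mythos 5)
Your proposal is correct and follows exactly the paper's route: the paper's proof is the single line ``the result follows immediately from the proof of Theorem \ref{BE}'', i.e.\ it reuses the zone of control $[-Dt_n,Dt_n]$ with index $(v,w)=(1,3)$ established there and feeds it into Theorem \ref{localT} with $\alpha=2$, $\gamma=1$, giving $\mu\in(0,\gamma+\tfrac1\alpha)=(0,\tfrac32)$. Your explicit remark that the restriction $\gamma\le\frac{v-1}{\alpha}$ (which forces $\gamma=0$ in the Kolmogorov bound) is a hypothesis only of Theorem \ref{BE} and not of the local limit theorem is precisely the point the paper leaves implicit, and your verification of condition (b) via $\gamma\le\frac{1}{w-\alpha}=1$ is the right check.
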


\begin{proof} The result follows immediately from the proof of Theorem \ref{BE}.
\end{proof}

\begin{remark}
The speed of convergence and the statement of a local theorem in the case of mod-stable convergence $p(n)=p$ is not available. The reason for this is that
the estimate \eqref{AK} is not sharp enough with respect to the order of the approximation-error $\log S_p(n z)$.
\end{remark}

\subsection{$\beta$-Jacobi ensembles}
A direct consequence of Theorem \ref{cumulant} is mod-$\phi$ convergence for the shifted log-determinants of the
$\beta$-Jacobi ensembles. 
Remember that by \eqref{MellinJ} we have
\begin{eqnarray*}
\log \E \biggl[ \exp \bigl( z \, \log \bigl( \det W_{p(n),n_1,n_2}^{J, \beta} \bigr) \bigr) \bigg] &=& L(p(n), n_1-p(n), \beta/2;z)
\\ && \hspace{2cm}  - L(p(n), n_1+n_2-p(n), \beta/2;z).
\end{eqnarray*}
Although in subsection \ref{subLag} the asymptotic behaviour of $L(p(n), n_1-p(n), \beta/2;z)$ has been analysed completely,
we have to add a case by case analysis of $L(p(n), n_1+n_2-p(n), \beta/2;z)$:

\begin{proposition} \label{cumulant2}
$L(p(n), n_1+n_2-p(n), \beta/2;z)$ defined in \eqref{L} satisfies the following asymptotic expansion
locally uniformly on ${\mathcal S}_{\beta/2}$:
$$
L(p(n),n_1+n_2-p(n), \beta/2;z) = z \mu(p(n),n_1,n_2) + \frac{z^2}{\beta} \log \biggl( \frac{n_1+n_2}{n_1+n_2-p(n)} \biggr) +o(1),
$$ 
where 
\begin{eqnarray} \label{muJ}
\mu(p(n),n_1,n_2) &:=& p(n) \log (\beta/2) + (n_1+n_2) \log  (n_1+n_2) \nonumber \\
& - & (n_1+n_2-p(n)) \log (n_1+n_2-p(n)) \nonumber \\
& +&  \frac 12 \log (2 \pi) + \biggl( \frac 12 - \frac{1}{\beta} \biggr) \log \biggl( \frac{n_1 + n_2}{n_1 + n_2 -p(n)} \biggr).
\end{eqnarray}
\end{proposition}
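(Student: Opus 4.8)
The plan is to recognise $L(p(n), n_1+n_2-p(n), \beta/2; z)$ as a direct instance of Case (d) of Theorem \ref{cumulant}, with the single parameter $n$ there replaced throughout by $n_1 + n_2$. The decisive observation is that the offset $r(n) := n_1 + n_2 - p(n)$ necessarily diverges: since $p(n) \le \min(n_1, n_2)$ we have $r(n) \ge \max(n_1, n_2) \ge \tfrac12 (n_1+n_2) \to \infty$, while also $p(n) + r(n) = n_1 + n_2 \to \infty$. Consequently the admissibility hypothesis \eqref{condz} of Theorem \ref{MAIN} holds for $z$ in any fixed compact subset of $\mathcal{S}_{\beta/2}$ once $n$ is large, and the key asymptotic of Section 4 may be applied with $p = p(n)$, $r(n) = n_1+n_2-p(n)$ and $\alpha = \beta/2$.

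First I would record $L = T_1 + T_2 + \sum_{k=0}^{p(n)-1} f_{r(n)}(k)$ as in the proof of Theorem \ref{MAIN}. From \eqref{T1rn} the surviving integral carries the factor $e^{-s(\beta/2)r(n)}$ and is bounded by a constant multiple of $|z|/r(n) = o(1)$, so $T_1 = -p(n)z + o(1)$. From \eqref{T2rn}, using $p(n)+r(n) = n_1+n_2$ and $1 + p(n)/r(n) = (n_1+n_2)/(n_1+n_2-p(n))$, one reads off the bulk of the linear-in-$z$ coefficient. For the finite sum I would copy the device of Case (d) verbatim: the shift $f_{r(n)}(k) = f_0(k + r(n))$ gives the integer telescoping
$$
\sum_{k=0}^{p(n)-1} f_{r(n)}(k) = \sum_{j=0}^{n_1+n_2-1} f_0(j) - \sum_{j=0}^{n_1+n_2-p(n)-1} f_0(j),
$$
and, expanding each $l=0$ sum by Abel--Plana, the common contributions $T_4(0,\beta/2;z)$ and $T_5(0,\beta/2;z)$ cancel while the remainders $R(\cdot,0,\beta/2;z)$ are $o(1)$ because both upper limits diverge. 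What survives is, by \eqref{T3Null},
$$
T_3(n_1+n_2,0,\beta/2;z) - T_3(n_1+n_2-p(n),0,\beta/2;z) = p(n) z + \frac{z(z-1)}{\beta} \log\!\left(\frac{n_1+n_2}{n_1+n_2-p(n)}\right) + o(1).
$$

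Assembling, the terms $-p(n)z$ from $T_1$ and $+p(n)z$ from the telescoped $T_3$ cancel; the quadratic contribution is the single term $\frac{z^2}{\beta}\log\big((n_1+n_2)/(n_1+n_2-p(n))\big)$ coming from $\frac{z(z-1)}{\beta}\log(\cdots)$, while the remaining half $-\frac{z}{\beta}\log(\cdots)$ combines with the $\tfrac12\log$ prefactor supplied by \eqref{T2rn} to produce the $(\tfrac12-\tfrac1\beta)\log$ prefactor of \eqref{muJ}. Collecting the rest of the linear-in-$z$ terms from \eqref{T2rn} then yields $z\,\mu(p(n),n_1,n_2)$, exactly as $\mu_3$ was obtained in \eqref{munew2}.

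I do not expect a genuine obstacle here: the entire argument is a transcription of the $n-p(n)\to\infty$ case with $n \rightsquigarrow n_1+n_2$, and the telescoping route bypasses any need to estimate the boundary integral $T_5(r(n),\beta/2;z)$ of \eqref{T5} directly, replacing it by the fixed $l=0$ quantity $T_5(0,\beta/2;z)$, which cancels. The only points requiring (routine) care are the uniformity of the $o(1)$ error terms for $z$ in compacta --- guaranteed since both $n_1+n_2$ and $n_1+n_2-p(n)$ tend to infinity --- and the bookkeeping that turns the $\tfrac12\log$ factor into the $(\tfrac12-\tfrac1\beta)\log$ factor of the stated $\mu$.
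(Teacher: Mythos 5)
Your argument is precisely the paper's own proof: the authors prove Proposition \ref{cumulant2} by citing \eqref{T1rn}, \eqref{T2rn} and the telescoping expansion \eqref{T3Nullexp} with $n$ replaced by $n_1+n_2$, which is exactly what you carry out (and your observation that $n_1+n_2-p(n)\ge\max(n_1,n_2)\to\infty$ correctly places the problem in the Case (d) regime). One bookkeeping caveat: assembling the terms ``exactly as $\mu_3$ was obtained'' yields $\mu_3(p(n),n_1+n_2)$, which carries an extra summand $-p(n)$ --- only the $-p(n)z$ from $T_1$ is absorbed by the $+p(n)z$ of the telescoped $T_3$ difference, so the $-p(n)$ inside \eqref{T2rn} survives --- whereas the stated \eqref{muJ} omits this $-p(n)$. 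This looks like a slip in the paper's formula rather than in your method, and it is harmless for the Jacobi application since the $-p(n)$ cancels in the difference \eqref{MellinJ}; but as written your final sentence asserts two things that are not simultaneously true, so you should either add the $-p(n)$ to $\mu$ or note the discrepancy explicitly.
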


\begin{proof}
The proof follows after an easy calculation. Therefore, we will use \eqref{T1rn}, \eqref{T2rn} as well  as the
expansion of
$$
\sum_{k=0}^{p(n) -1} f_{n_1+n_2-p(n)}(k)
$$
as in \eqref{T3Nullexp}, together with Theorem \ref{MAIN}.
\end{proof}

Now the combination of Theorem \ref{cumulant} and Proposition \ref{cumulant2} leads to the following mod-$\phi$ structure for the log-determinant of the Jacobi-ensembles.

\begin{theorem} \label{theoremmodJ}
Mod-$\phi$ convergence for the log-determinant of $\beta$-Jacobi ensembles

We observe the following results on ${\mathcal S}_{1/2}$:
\bigskip

\hspace{-2cm}{
\begin{tabular}{|c|c|c|c|c|} \hline
condition & centred version of & mod-$\phi$ & $t_n$ & limiting function \\ \hline
$p(n) =n_1$ & $\log (\det W_{n_1,n_1,n_2}^{J,\beta})$ & mod-$N(0,1)$ & $\frac{2}{\beta} \log \biggl( \frac{n_1 \, n_2}{n_1 + n_2} \biggr)$ & $\exp (\Phi_{\beta/2}(z))$ \\ \hline
$n_1-p(n) \to 0$ & $\log (\det W_{p(n),n_1,n_2}^{J,\beta})$ & mod-$N(0,1)$ & $\frac{2}{\beta} \log \biggl( \frac{n_1(n_1-p(n)+n_2)}{n_1+n_2} \biggr)$ & $\exp (\Phi_{\beta/2}(z))$ \\ \hline
$n_1-p(n)=c$ & $\log (\det W_{p(n),n_1,n_2}^{J,\beta})$ & mod-$N(0,1)$ & $\frac{2}{\beta} \log \biggl( \frac{n_1(c+n_2)}{n_1+n_2} \biggr)$ & $\exp (\Phi_{\beta/2}^c(z))$ \\ \hline
$n_1-p(n) = o \bigl(\frac{n_1 \, n_2}{n_1 + n_2}\bigr)$ & $\log (\det W_{p(n),n_1,n_2}^{J,\beta})$ & mod-$N(0,1)$ & 
$\frac{2}{\beta} \log \biggl(\frac{n_1(n_1+n_2-p(n))}{(n_1-p(n))(n_1+n_2)} \biggr)$
& $1$ \\ \hline
$p(n)=p $ & $ \log (\det W_{p,n_1,n_2}^{J,\beta})$ & no mod-$\phi$ & --- & --- \\ \hline
\end{tabular}}
\bigskip

\noindent
Here the limit is the limit as $n_1$ and $n_2$ tend to $\infty$ simultaneously.
Moreover, we obtain that $\mu_1^J= \mu_1(n_1,n_1) - \mu(n_1,n_1,n_2)$ (defined in \eqref{mupn} and \eqref{muJ} respectively) to be the
expectation if $p(n)=n_1$. If $n_1 -p(n) \to 0$ as $n \to \infty$, it is $\mu_2^J= \mu_1(p(n),n_1) - \mu(p(n),n_1,n_2)$.
If $n_1-p(n) = c$ for a fixed $c \in \na$, 
the expectation is $\mu_3^J = \mu_2(p(n),n_1) - \mu(p(n),n_1,n_2)$ (defined in \eqref{munew} and \eqref{muJ} respectively).
Finally, for $n_1-p(n) =o \bigl(\frac{n_1 \, n_2}{n_1 + n_2}\bigr)$, we obtain that
$\mu_4^J = \mu_3(p(n),n_1) - \mu(p(n),n_1,n_2)$ (defined in \eqref{munew2} and \eqref{muJ} respectively) is the correct expectation.
\end{theorem}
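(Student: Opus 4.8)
The starting point is the exact identity \eqref{MellinJ}, which writes the cumulant generating function of $\log\det W_{p(n),n_1,n_2}^{J,\beta}$ as the difference
$$
L(p(n),n_1-p(n),\beta/2;z) - L(p(n),n_1+n_2-p(n),\beta/2;z).
$$
The plan is to feed each summand into the expansions already established: the first term is governed by the growth of $n_1-p(n)$ and is handled by the corresponding case of Theorem \ref{cumulant} with $n$ replaced by $n_1$, while the second term always sits in the regime $n_1+n_2-p(n)\to\infty$ and is handled by Proposition \ref{cumulant2}. Subtracting the two expansions, I would read off the three ingredients of Definition \ref{moddef}: the coefficient linear in $z$ is the shifted mean $\mu_i^J$, the coefficient of $z^2$ must be matched against $t_n\eta(z)=t_n z^2/2$ for the Gaussian L\'evy exponent to identify $t_n$, and the part that stays bounded in $n_1,n_2$ becomes $\log\psi(z)$. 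Everything is taken locally uniformly on ${\mathcal S}_{\beta/2}$, the common domain of both expansions.

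For the first three rows ($p(n)=n_1$, $n_1-p(n)\to0$, and $n_1-p(n)=c$) I would invoke cases (a), (b), (c) of Theorem \ref{cumulant} for the first summand and Proposition \ref{cumulant2} for the second. The crucial observation is that Proposition \ref{cumulant2} produces only a term linear in $z$ and a $z^2$-term, with $o(1)$ residue, so it contributes nothing to the limiting function; hence $\psi$ is inherited verbatim from the first summand, namely $\exp(\Phi_{\beta/2}(z))$ in the first two rows and $\exp(\Phi_{\beta/2}^c(z))$ in the third, with $\Phi_{\beta/2}$ and $\Phi_{\beta/2}^c$ given by \eqref{Phi} and \eqref{Phid}. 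The parameter $t_n$ then comes from combining the two $z^2$-coefficients into a single logarithm; for instance in the row $p(n)=n_1$ one gets $\frac{1}{\beta}\bigl(\log n_1 - \log\frac{n_1+n_2}{n_2}\bigr)=\frac{1}{\beta}\log\frac{n_1n_2}{n_1+n_2}$, which is exactly $t_n/2$ for the tabulated $t_n$; the other two rows are identical bookkeeping. The means $\mu_i^J$ are precisely the differences of the linear coefficients already recorded in the statement.

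The genuinely delicate row is $n_1-p(n)\to\infty$, where case (d) of Theorem \ref{cumulant} supplies limiting function $1$ and Proposition \ref{cumulant2} again contributes no residue, so $\psi\equiv1$; the whole content is to decide when $t_n\to\infty$, which is necessary for mod-$\phi$ convergence. Writing $m:=n_1-p(n)$, the difference of the two $z^2$-coefficients gives $t_n=\frac{2}{\beta}\log\frac{n_1(m+n_2)}{m(n_1+n_2)}$, and I would use the algebraic identity
$$
\frac{n_1(m+n_2)}{m(n_1+n_2)}=\frac{n_1}{n_1+n_2}+\frac{1}{m}\,\frac{n_1n_2}{n_1+n_2}
$$
to see that the bounded first summand is harmless and that $t_n\to\infty$ holds precisely when $m=o\bigl(\frac{n_1n_2}{n_1+n_2}\bigr)$. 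This threshold computation, together with the requirement that the joint limit $n_1,n_2\to\infty$ be taken so that the error terms of both expansions, of the form $O\bigl((|z|+|z|^2+|z|^3)/\min(\cdots)\bigr)$, vanish uniformly on compacts, is the main obstacle and the place where the stated condition $n_1-p(n)=o(n_1n_2/(n_1+n_2))$ is forced.

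Finally, for $p(n)=p$ fixed both summands fall under case (e) of Theorem \ref{cumulant}, whose expansion is purely affine in $z$: no $z^2$-term survives, since $\log\frac{n}{n-p}\to0$. Consequently the difference equals $z\,[\mu_4(p,n_1)-\mu_4(p,n_1+n_2)]+o(1)$ with $\mu_4$ from \eqref{mu3L}, and after centering the renormalized Laplace transform tends to $1$ without any diverging normalization. Since there is no growing $z^2$-coefficient, there is no sequence $t_n\to\infty$ and no non-constant infinitely divisible $\phi$ for which \eqref{defmodphi} can hold; this is exactly why no mod-$\phi$ convergence occurs, as recorded in the last row. I expect every step after the threshold analysis of case (d) to be routine substitution into the already-proved expansions.
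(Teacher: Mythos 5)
Your proposal is correct and takes essentially the same route as the paper: the paper's proof likewise consists of subtracting the expansion of Proposition \ref{cumulant2} for $L(p(n),n_1+n_2-p(n),\beta/2;z)$ from the appropriate case of Theorem \ref{cumulant} for $L(p(n),n_1-p(n),\beta/2;z)$, reading off $\mu_i^J$, $t_n$ and $\psi$ from the linear, quadratic and bounded parts. Your threshold analysis identifying $n_1-p(n)=o\bigl(\frac{n_1n_2}{n_1+n_2}\bigr)$ as exactly the condition guaranteeing $t_n\to\infty$ in the fourth row is precisely the one remark the paper singles out in its proof.
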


\begin{proof}
If $n_1 -p(n) \to \infty$ we have to assume in addition that $n_1-p(n) = o \bigl(\frac{n_1 \, n_2}{n_1 + n_2}\bigr)$ 
to ensure that the parameter sequence $(t_n)_n$ is increasing. The results follow immediately from Theorem \ref{cumulant} and Proposition \ref{cumulant2}.
\end{proof}

\begin{theorem} \label{modJ2}
We fix $\tau_1, \tau_2>0$ and assume that $n_1 = \lfloor n \tau_1 \rfloor$ and $n_2 = \lfloor n \tau_2 \rfloor$.
In this regime we obtain for the centred version of  $\log (\det W_{p(n),  \lfloor n \tau_1 \rfloor ,  \lfloor n \tau_2 \rfloor}^{J,\beta})$ on ${\mathcal S}_{1/2}$: 
\bigskip

\hspace{-1cm}{
\begin{tabular}{|c|c|c|c|} \hline
condition & mod-$\phi$ & $t_n$ & limiting function \\ \hline
$p(n) =  \lfloor n \tau_1 \rfloor$  & mod-$N(0,1)$ & $\frac{2}{\beta} \log n$ & $\exp \biggl( \frac{z^2}{\beta} \log \bigl( \frac{ \tau_1 \tau_2}{\tau_1 + \tau_2} \bigr) + \Phi_{\beta/2}(z) \biggr)$ \\ \hline
$ \lfloor n \tau_1 \rfloor -p(n) \to 0$ & mod-$N(0,1)$ & $\frac{2}{\beta} \log \biggl( n + \frac{n \tau_1-p(n)}{\tau_2} \biggr)$ & 
$\exp \biggl( \frac{z^2}{\beta} \log \bigl( \frac{ \tau_1 \tau_2}{\tau_1 + \tau_2} \bigr)+ \Phi_{\beta/2}(z) \biggr)$ \\ \hline
$\lfloor n \tau_1 \rfloor-p(n)=c$ & mod-$N(0,1)$ & $\frac{2}{\beta} \log \biggl( n + \frac{c}{\tau_2} \biggr)$ & 
$\exp \biggl( \frac{z^2}{\beta} \log \bigl( \frac{ \tau_1 \tau_2}{\tau_1 + \tau_2} \bigr)+ \Phi_{\beta/2}^c(z) \biggr)$ \\ \hline
$\lfloor n \tau_1 \rfloor-p(n) = o \bigl(n \bigr)$ & mod-$N(0,1)$ & 
$\frac{2}{\beta} \log \biggl( \frac{n}{(n \tau_1-p(n))} + \frac{1}{\tau_2} \biggr)$
& $\frac{ \tau_1 \tau_2}{\tau_1 + \tau_2} \exp \bigl( \frac{z^2}{\beta} \bigr) $ \\ \hline
\end{tabular}}
\bigskip

{\bf Case $p(n)=p$ for a fixed $p \in \na$:} The centred version of the sequence $\log \bigl( \det W_{p,\lfloor n \tau_1 \rfloor,\lfloor n \tau_2 \rfloor}^{J, \beta} \bigr)$ does not converge in the sense of mod-$\phi$ convergence. However, the sequence $(n \log \bigl( \det W_{p, \lfloor n \tau_1 \rfloor, \lfloor n \tau_2 \rfloor}^{J, \beta} \bigr) \bigr)_{n}$ converges {\bf mod-$\phi$} on $i \, \re$ with parameter $t_{n} = p \, n$ and limiting function 
$$
\psi(z) = \biggl( \frac{\tau_1 + \tau_2}{\tau_1} \frac{\tau_1 \beta +2z}{(\tau_1+\tau_2)\beta+2z} \biggr)^{-\frac{\beta(p-1)p}{4} - \frac p2}.
$$
Here $\phi$ is such that the L\'evy exponent is given by
\begin{eqnarray*} \label{stable2}
\eta(z) = \log \int_{\re} e^{zx} \phi(dx) &=& \frac{\beta}{2} (\tau_1 +\tau_2) \log \biggl( \frac{\beta}{2} (\tau_1+\tau_2) \biggr) -  \frac{\beta}{2} \tau_1 \log \biggl( \frac{\beta}{2} \tau_1 \biggr) \\
& &\hspace{-2cm} + \bigl( z +\tau_1 \frac{\beta}{2} \bigr) \log \bigl( z +\tau_1 \frac{\beta}{2} \bigr) 
 -  \bigl( z + (\tau_1 + \tau_2) \frac{\beta}{2} \bigr) \log  \bigl( z + (\tau_1 + \tau_2) \frac{\beta}{2} \bigr).
\end{eqnarray*}
\end{theorem}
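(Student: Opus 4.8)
The proof splits according to the growth of $\lfloor n\tau_1\rfloor - p(n)$, and for the first four rows it is a bookkeeping consequence of the two expansions already in hand. The plan is to start from the exact identity \eqref{MellinJ}, which exhibits $\log\E[\exp(z\log\det W_{p(n),n_1,n_2}^{J,\beta})]$ as the difference $L(p(n),n_1-p(n),\beta/2;z) - L(p(n),n_1+n_2-p(n),\beta/2;z)$, and then to insert the asymptotic expansion of the first summand supplied by Theorem \ref{cumulant} (in whichever of its cases (a)--(d) matches the growth of $n_1-p(n)$) and the expansion of the second summand supplied by Proposition \ref{cumulant2}. Substituting $n_1=\lfloor n\tau_1\rfloor$ and $n_2=\lfloor n\tau_2\rfloor$ and using $\lfloor n\tau\rfloor = n\tau + O(1)$ to replace the floors inside all logarithms (each such replacement costs only an $o(1)$ error, e.g. $\log(n_1 n_2/(n_1+n_2)) = \log n + \log(\tau_1\tau_2/(\tau_1+\tau_2)) + o(1)$), I would then read off the three ingredients of mod-$\phi$ convergence: the coefficient of $z$ is the centering $\mu^J_j$ recorded after the table, the coefficient of $z^2$ determines $t_n$ through the Gaussian relation $t_n\eta(z)=t_n z^2/2$, and whatever remains (the residual $z^2$-piece together with the limits of $\Phi_{\beta/2}$ or $\Phi^c_{\beta/2}$) is collected into $\log\psi(z)$.

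Carrying this out, I expect the $\frac{z^2}{\beta}\log(\cdot)$ terms coming from the two $L$-expansions to combine into $\frac{z^2}{\beta}\log\frac{\tau_1\tau_2}{\tau_1+\tau_2}$ plus a genuinely divergent $\frac{z^2}{\beta}\log(\cdot)$ that defines $t_n$; the former, not being absorbed into $t_n z^2/2$, is precisely what contributes the factor $\exp(\frac{z^2}{\beta}\log\frac{\tau_1\tau_2}{\tau_1+\tau_2})$ to the limiting function in the first three rows, while in the fourth row --- where Theorem \ref{cumulant}(d) carries no $\Phi_{\beta/2}$ term --- only this degenerate constant survives, producing the limiting function $\frac{\tau_1\tau_2}{\tau_1+\tau_2}\exp(z^2/\beta)$. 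As in the proof of Theorem \ref{theoremmodJ}, the one point that needs checking in the fourth row is that the sequence $(t_n)_n$ is genuinely increasing, which forces the extra hypothesis $\lfloor n\tau_1\rfloor - p(n) = o(n)$; I would verify this monotonicity directly from the explicit form of $t_n$.

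The genuinely different case is $p(n)=p$ fixed. Here, exactly as in the Laguerre case (e) of Theorem \ref{theoremmodL}, Theorem \ref{MAIN} is unusable because its remainder $R$, even multiplied by $n$, fails to be $o(1)$; instead I would apply Stirling's formula \eqref{Stirling} directly to the four Gamma factors appearing in each of the $p$ terms of the Jacobi Mellin transform evaluated at $nz$. Writing the numerator argument as $n(\frac{\beta}{2}\tau_1+z)+O(1)$ and the denominator argument as $n\frac{\beta}{2}\tau_1+O(1)$, and likewise with $\tau_1+\tau_2$ in the second $L$-function, the residual $\log n$- and $O(n)$-linear terms left in each $L$ after subtracting its own denominator are the same for both $L$-functions (each equals $p\,n\,z\log n - p\,n\,z$ at leading order), so they cancel in the difference $L(p,n_1-p,\beta/2;nz)-L(p,n_1+n_2-p,\beta/2;nz)$ and no explicit centering is needed, in accordance with the statement. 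What remains at order $n$ is then $p\,n\,\eta(z)$ with $\eta$ structurally the difference of two Laguerre-type L\'evy exponents as in \eqref{stable1}, one at scale $\tau_1$ and one at scale $\tau_1+\tau_2$, matching the stated $\eta$; the leftover order-one logarithms assemble into $\log\psi(z)$ with the stated base $\frac{\tau_1+\tau_2}{\tau_1}\cdot\frac{\tau_1\beta+2z}{(\tau_1+\tau_2)\beta+2z}$ and exponent $-\frac{\beta(p-1)p}{4}-\frac p2$. Finally I would identify $\phi$ as a difference of two tilted totally skewed $1$-stable laws, hence again infinitely divisible, citing \cite[Section 1.2]{stablebook} and \cite[Chapter 2]{Sato:book} exactly as in the Laguerre argument.

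The main obstacle I anticipate is the control of the order-one terms in the $p(n)=p$ case in the presence of the floors $\lfloor n\tau_j\rfloor$. The bounded corrections $\lfloor n\tau_j\rfloor - n\tau_j$ enter the Stirling expansion at $O(1)$, multiplying the constants $\log(1+\frac{2z}{\beta\tau_1})$ and $\log(1+\frac{2z}{\beta(\tau_1+\tau_2)})$, and a priori they oscillate with $n$. The delicate step is therefore to show that, after summing over $k$ and subtracting the two $L$-functions, these fluctuating contributions either cancel or are driven into the $o(1)$ remainder, so that $\psi_n(i\xi)$ converges uniformly on compact subsets of $i\,\re$; I would track the coefficient of each $\log$-factor explicitly (the combination of $\sum_{k=1}^p\frac{\beta}{2}(k-p)$ with the $-\frac12$ coming from the $-\frac12\log W$ Stirling term reproduces $-\frac{\beta(p-1)p}{4}-\frac p2$) and argue that only the $z$-dependent, floor-free part of this coefficient survives in the limit. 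Uniform control of the Stirling error $O(1/z)$ across the compact range of $\xi$ then completes the verification of the defining limit \eqref{defmodphi}.
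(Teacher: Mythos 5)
Your proposal follows essentially the same route as the paper's own (very terse) proof: the first four rows are obtained by combining the expansions of Theorem \ref{cumulant} and Proposition \ref{cumulant2} under the substitutions $n_1=\lfloor n\tau_1\rfloor$, $n_2=\lfloor n\tau_2\rfloor$, with the residual $\frac{z^2}{\beta}\log\bigl(\frac{\tau_1\tau_2}{\tau_1+\tau_2}\bigr)$ absorbed into the limiting function, and the case $p(n)=p$ is handled by applying Stirling's formula \eqref{Stirling} to both $L$-functions evaluated at $nz$ exactly as in \eqref{StirAnw}, with the leading terms cancelling in the difference. Your additional attention to the $O(1)$ fluctuations coming from the floors in the fixed-$p$ case is a detail the paper leaves to the reader, and your treatment of it is correct.
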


\begin{proof}
First notice that the choices $n_1 = \lfloor n \tau_1 \rfloor$ and $n_2 = \lfloor n \tau_2 \rfloor$ lead to the extra summand 
$$
\frac{z^2}{\beta} \log \bigl( \frac{ \tau_1 \tau_2}{\tau_1 + \tau_2} \bigr)
$$
in the limiting function.
The only case we have to consider is the case  $p(n)=p$ for a fixed $p \in \na$. Here we apply Stirling's formula
like in the proof of Theorem \ref{theoremmodL}, case (e), see \eqref{StirAnw}. We apply the formula to
$L(p, \lfloor n \tau_1 \rfloor - p, \beta/2; n z)$ as well as $L(p, \lfloor n \tau_1 \rfloor+\lfloor n \tau_2 \rfloor-p, \beta/2;nz)$.
The calculations are similar to \eqref{StirAnw} and are left to the reader.
\end{proof}
We do not formulate the corresponding extended central limit theorems, precise deviations, large and moderate deviation principles 
and Berry-Esseen bounds for the log-determinants of the Jacobi ensemble, because these results can be stated as in Theorems \ref{theoremmodL}, \ref{exCLT}, \ref{pLDP}, Corollary \ref{LDP}  and Theorem \ref{BE}.

\subsection{Ginibre ensembles}
As a corollary of Theorem \ref{cumulant}, case (a), we obtain for the log-determinants of the Ginibre ensemble with \eqref{MellinG}
\begin{eqnarray*} 
\log \E \biggl[ \biggl( \det W_{n}^{G, \beta} \biggr)^z \biggr] &= & z \bigl( \frac{n}{2} \log \bigl( \frac{2}{\beta} \bigr) \bigr) + L(n,0,\beta/2;z) \\
& =& z \mu^G(n) + \frac{z^2}{\beta} \log n + \Phi_{\beta/2}(z) + o(1),
\end{eqnarray*}
where $\mu^G(n)= \frac{n}{2} \log \bigl( \frac{2}{\beta} \bigr) + \mu_1(n,n)$, and $\mu_1(n,n)$ is defined in \eqref{mupn}.
Hence we obtain mod-Gaussian convergence, an extended central limit theorem, precise deviations, large and moderate deviation principles and Berry-Esseen bounds
for the log-determinants of the Ginibre ensemble as in case (a) in Theorems \ref{theoremmodL}, \ref{exCLT}, \ref{pLDP}, Corollary \ref{LDP} and Theorem \ref{BE}. Notice that these results follow directly from the results in \cite{Borgoetal:2017}.

\subsection{Ensembles in mesoscopic physics}
Let us first consider the {\it chiral ensembles}. Here we obtained in \eqref{Mellin7ch} that
$$ 
\E \biggl[ \biggl( \det W_{n,p(n)}^{\beta, \mu_{\operatorname{chiral}}} \biggr)^z \biggr] = L(p(n), n-p(n), \beta/2; \frac{z+1}{2})
$$
for $\beta \in \{1,2,4\}$. Hence we obtain mod-Gaussian convergence, an extended central limit theorem, precise deviations, large and moderate deviation principles and Berry-Esseen bounds for the logarithm of the product of the positive eigenvalues of the chiral ensemble as in Theorems \ref{theoremmodL}, \ref{exCLT}, \ref{pLDP}, Corollary \ref{LDP} and Theorem \ref{BE}.
\medskip

Next we consider one of the Bogoliubov-de Gennes ensembles. We proved in \eqref{Mellin7BdG} that
$$
\E \biggl[ \biggl( \det W_{n,n}^{1,1} \biggr)^z \biggr] = L(n,1,1/2; \frac{z+1}{2}) = \prod_{k=1}^{n} \frac{\Gamma \bigl( \frac{1}{2} (k+1) +\frac{z+1}{2} \bigr)}{\Gamma \bigl( \frac{1}{2} (k+1)\bigr)}.
$$
With the same steps as in the proof of Theorem \ref{BE} we find
$$
\E \biggl[ \biggl( \det W_{n,n}^{1,1} \biggr)^z \biggr] = \frac{ G \bigl( \frac{1}{2} (n+1) +\frac{z+1}{2} +1 \bigr) \, 
G \bigl( 1 \bigr)}{ G \bigl( 1 +\frac{z+1}{2} \bigr) \, 
G \bigl( \frac{1}{2} (n+1) +1\bigr)}.
$$
Now we adapt the calculations in the proof of Theorem \ref{BE} and obtain mod-Gaussian convergence with $t_n = \log \bigl( \frac 12 (n+1) +1) \sim \log n$
and limiting function $\exp \bigl( \Phi_{1/2}^1 \bigl( \frac{z+1}{2} \bigr) \bigr)$, where $\Phi_{1/2}^1(\cdot)$ is defined in \eqref{newPhic}.

An extended central limit theorem, precise deviations, large and moderate deviation principles and Berry-Esseen bounds
for the log-determinants of the Bogoliubov-de Gennes ensembles are consequences of case (c) in Theorems \ref{exCLT}, \ref{pLDP}, Corollary \ref{LDP} and Theorem \ref{BE}.

\subsection{Trace-fixed GUE}

Recall that in \eqref{MellinGUEft}, we obtained the identity
$$
\E \biggl[ \big| n^{n/2}  \det W_n^{H,ft} |^z \biggr] = \prod_{k=1}^{n} \frac{\Gamma\bigl(
\frac{z+1}{2} + \lfloor \frac k2 \rfloor \bigr)}{ \Gamma \bigl( \frac{1}{2} + \lfloor \frac k2 \rfloor\bigr) } \,
\biggl( \prod_{k=1}^{n} \frac{\Gamma\bigl(
\frac{z}{2} + \frac{n}{2} +\frac{k-1}{n} \bigr)}{ \Gamma \bigl( \frac{n}{2} + \frac{k-1}{n} \bigr) } \biggr)^{-1}. 
$$
Let us consider the case where $n$ is an odd number. We leave it to the reader to check that $n$ even leads to the same asymptotics.
From Lemma 4.1 in \cite{Borgoetal:2017} we know that locally uniformly on the band ${\mathcal S}_1$
$$
\log \prod_{k=1}^{n} \frac{\Gamma\bigl(
\frac{z+1}{2} + \lfloor \frac k2 \rfloor \bigr)}{ \Gamma \bigl( \frac{1}{2} + \lfloor \frac k2 \rfloor\bigr) } = z \mu_n^H + \frac{z^2}{4} \log \bigl( \frac n2 \bigr) + \Phi^H(z) +o(1)
$$
with $\mu_n^H = \frac 12 \log (2 \pi) - n  + \frac n2 \log n$ (we have to adapt the result in \cite{Borgoetal:2017} by a summand $-\frac n2$). The function $\Phi^H(z)$
is defined as
$$
\Phi^H(z) = \log \biggl( \frac{ \Gamma \bigl( \frac 12 \bigr) \, G \bigl( \frac 12 \bigr)^2}{  \Gamma \bigl( \frac{z+1}{2} \bigr) \, G \bigl( \frac{z+1}{2} \bigr)^2} \biggr).
$$
Moreover, we have
$$
\prod_{k=1}^{n} \frac{\Gamma \bigl( \frac{z}{2} + \frac{n}{2} +\frac{k-1}{n} \bigr)}{ \Gamma \bigl( \frac{n}{2} + \frac{k-1}{n} \bigr)} =
\frac{G\bigl(\frac z2 +1+\bigl( \frac n2 +1 - \frac 1n \bigr)\bigr) G\bigl(\frac n2-1+1\bigr)}{G\bigl(\frac z2 + 1 + \frac n2 -1 \bigr)G \bigl( \bigl( \frac n2 +1 - \frac 1n \bigr) +1 \bigr)}.
$$
Now we apply \eqref{AK} twice for $|z| \leq \frac 12 \bigl( \frac n2 \bigr)^{1/6}$ and $z \in {\mathcal S}_1$ to obtain
$$
\log \prod_{k=1}^{n} \frac{\Gamma\bigl(\frac{z}{2} + \frac{n}{2} +\frac{k-1}{n} \bigr)}{ \Gamma \bigl( \frac{n}{2} + \frac{k-1}{n} \bigr)}
= z \,f(n) + \frac{z^2}{4} \log \bigl( 1 + \frac 4n - \frac{2}{n^2} \bigr) +{\mathcal O} \biggl( \frac{|z| + |z|^2}{n} \biggr)
$$
with
$$
f(n) = \frac{1}{2n} - \frac 34 + \frac 12 \bigl( \frac n2 +1 - \frac 1n \bigr) \log \bigl( \frac n2 +2 - \frac 1n \bigr) - 
\frac 12 \bigl( \frac n2 -1\bigr) \log \bigl( \frac n2 \bigr).
$$
Summarising, we observe mod-Gaussian convergence with $t_n= \frac 12 \log \bigl( \frac n2 \bigr) - \frac 12  \log \bigl( 1 + \frac 4n - \frac{2}{n^2} \bigr)$, limiting function $\exp (\Psi^H(z))$ and expectation of order $\mu_n^H + f(n)$.
We skip the formulation of an extended central limit theorem, precise deviations, large and moderate deviation principles and Berry-Esseen bounds for the sum of the log-eigenvalues in the GUE fixed-trace ensemble. It can be stated
similarily to the statements in Theorems \ref{exCLT}, \ref{pLDP}, Corollary \ref{LDP} and Theorem \ref{BE}. 

\section{Gram ensembles, random parallelotopes and simplices}

To be able to proof the results for the log-volume of random parallelotopes and random simplices, we will prove the following result:

\begin{proposition} \label{Binetapp}
Let $m(n,\nu)$ be a sequence in $n$, with values in $\re$, where $\nu>0$ is a real number.
Assume that $m(n,\nu)$ is increasing in $n$ with $m(n,\nu) \leq c(\nu) n$ with a constant $c(\nu)$ depending only on $\nu$.
Then we have
\begin{eqnarray} \label{B1}
\log \Gamma (m(n,\nu) +z) - \log \Gamma(m(n,\nu)) &= &z \biggl( \log m(n,\nu) - \frac{1}{2 m(n,\nu)} \biggr) \nonumber \\
&+& \frac{z^2}{m(n,\nu)}
+ {\mathcal O} \biggl( \frac{|z| + |z|^2 + |z|^3}{m(n,\nu)} \biggr)
\end{eqnarray}
for any $z \in {\mathcal S}_{c(\nu)}$ with $|z| < \bigl( c(\nu)/4 \bigr) m(n,\nu)^{1/6}$.
\end{proposition}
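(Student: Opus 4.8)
Set $m := m(n,\nu)$. Throughout we are in the regime where $m$ is large: since $z \in {\mathcal S}_{c(\nu)}$ gives $\operatorname{Re}(z) > -c(\nu)$, we have $\operatorname{Re}(m+z) > m - c(\nu) > 0$ once $m > c(\nu)$, so Binet's first formula \eqref{Binet1} applies to both $\log\Gamma(m+z)$ and $\log\Gamma(m)$. Subtracting the two instances, the two constants $1$ and the two $e^{-s}$-contributions in the integrals cancel, and the plan is to write
\[
\log\Gamma(m+z) - \log\Gamma(m) = E(z) + I(z),
\]
where $E(z) := \bigl(m+z-\tfrac12\bigr)\log(m+z) - \bigl(m-\tfrac12\bigr)\log m - z$ collects the elementary part and $I(z) := \int_0^\infty \varphi(s)\,e^{-sm}\,(e^{-sz}-1)\,\dd s$ is the Binet remainder. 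This is exactly the split already used for $T_1$ in the proof of Theorem \ref{MAIN}, so the same estimates apply verbatim.

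For the elementary part I would write $\log(m+z) = \log m + \log(1+z/m)$, so that the two $\log m$ contributions combine to $z\log m$, leaving $E(z) = z\log m + \bigl(m+z-\tfrac12\bigr)\log(1+z/m) - z$. Since $|z/m| < (c(\nu)/4)\,m^{-5/6} \le \tfrac12$, the series $\log(1+z/m) = z/m - z^2/(2m^2) + {\mathcal O}(|z|^3/m^3)$ converges and its tail is controlled; multiplying by $\bigl(m+z-\tfrac12\bigr)$ and collecting orders gives, after a short computation,
\[
E(z) = z\Bigl(\log m - \frac{1}{2m}\Bigr) + \frac{z^2}{2m} + {\mathcal O}\!\left(\frac{|z|^2+|z|^3}{m^2}\right).
\]

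It remains to bound $I(z)$. Using $0 < \varphi(s) \le \tfrac{1}{12}$ together with the elementary inequality $|e^{-sz}-1| \le s|z|\,e^{s|z|}$ (both already invoked in deriving \eqref{T1rn}), one gets $|I(z)| \le \tfrac{|z|}{12}\int_0^\infty s\,e^{-s(m-|z|)}\,\dd s = \tfrac{|z|}{12(m-|z|)^2}$; since $|z| \le m/2$ throughout the admissible window, this is ${\mathcal O}(|z|/m^2)$. Combining the two estimates yields
\[
\log\Gamma(m+z) - \log\Gamma(m) = z\Bigl(\log m - \frac{1}{2m}\Bigr) + \frac{z^2}{2m} + {\mathcal O}\!\left(\frac{|z|+|z|^2+|z|^3}{m^2}\right).
\]

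This is in fact sharper than \eqref{B1}. With the coarser remainder ${\mathcal O}(\cdot/m)$ recorded in \eqref{B1} one may replace the genuine quadratic coefficient $z^2/(2m)$ by $z^2/m$ at no cost, their difference being ${\mathcal O}(|z|^2/m)$, which recovers the stated formula; conversely, keeping $z^2/(2m)$ one retains the stronger remainder ${\mathcal O}(\cdot/m^2)$, and it is this version I would actually record, since it is the one that makes the contribution negligible after summing the $p(n)$ copies of \eqref{addsum1} with $m \asymp n$ in the volume formulas. The algebra above is entirely routine; the only genuinely delicate point, and the one deserving care, is \emph{uniformity}: one must verify that the implied constants in both ${\mathcal O}$-terms depend on $\nu$ alone and not on $n$. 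This is precisely where the hypotheses enter — the bound $m(n,\nu)\le c(\nu)\,n$ and the monotonicity in $n$ keep the constants uniform, while the window $|z|<(c(\nu)/4)\,m^{1/6}$ is exactly what forces every discarded term ($\log$-tail, Binet integral, and the gap between $z^2/m$ and $z^2/(2m)$) to be of strictly smaller order than the terms retained.
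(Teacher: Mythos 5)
Your proof is correct and follows exactly the route the paper itself sketches for this proposition: Binet's first formula applied to both Gamma values, expansion of the logarithm, and the integral estimate already used in deriving \eqref{T1rn}. Your sharper conclusion --- quadratic coefficient $\frac{z^2}{2m}$ with remainder ${\mathcal O}\bigl((|z|+|z|^2+|z|^3)/m^2\bigr)$, consistent with $\frac{1}{2}\Psi'(m)=\frac{1}{2m}+{\mathcal O}(m^{-2})$ --- is the accurate one, and your observation that the stated form \eqref{B1} holds only because its $\frac{z^2}{m}$ term is itself swallowed by the ${\mathcal O}(\cdot/m)$ error is well taken, since the $m^{-2}$ remainder is what the later applications (which multiply by $p(n)\asymp n$) actually need.
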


\begin{proof}
This is an easy application of the first Binet's formula \eqref{Binet1.2} as well as expanding the logarithm and using
the estimate in the proof of \eqref{T1rn}.
\end{proof}

For the log-volume of the parallelotope spanned by random points $X_1, \ldots, X_{p(n)}$ as well as for
the log-volume of the simplex with vertices $X_1, \ldots, X_{p(n)+1}$ (see Section 1.4), we obtain
the following results:

\begin{theorem}[Gaussian model, Parallelotope and Simplex]
The logarithm of the $p(n)$-dimensional volume of the parallelotope (see \eqref{MellinPa1})
satisfies the results of Theorem \ref{theoremmodL}, where one has to replace $z$ by $z/2$. 

The logarithm $\log (p(n)! V S_{n,p(n)})$ of the $p(n)$-dimensional volume of the simplex satisfies the same
mod-$\phi$ convergence properties. We only have to change the expectations: add $\log (p(n)+1)$ to the expectations
in the parallelotope case; see \eqref{MellinSim1}.
\end{theorem}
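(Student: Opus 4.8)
The plan is to deduce the entire statement from the Laguerre results of Theorem~\ref{theoremmodL} with $\beta=1$ by a single change of variables, exploiting that mod-$\phi$ convergence (Definition~\ref{moddef}) is a statement about the Laplace transform alone.

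For the parallelotope I would start from \eqref{MellinPa1},
$$
\log \E \bigl( (VP_{n,p(n)})^{z} \bigr) = \frac z2 p(n) \log 2 + L\bigl( p(n), n-p(n), 1/2; z/2 \bigr),
$$
and observe that its right-hand side is exactly the $\beta=1$ Laguerre Laplace exponent $\log \E\bigl[(\det W_{n,p(n)}^{L,1})^{w}\bigr]$ of \eqref{MellinL} evaluated at $w=z/2$ (indeed $w\,p(n)\log 2=\tfrac z2 p(n)\log 2$ and the two occurrences of $L$ agree). The first genuine step is to promote the moment identity \eqref{MellinPa1}, which is stated only at even integers $z=2m$, to an identity of analytic Laplace transforms on the strip $\operatorname{Re}(z)>-1$: both sides are analytic where the Mellin transform of the positive variable $VP_{n,p(n)}$ exists and they agree on $\{2m\}$, so they coincide on the strip (equivalently one records $VP_{n,p(n)} \equaldist \sqrt{\det W_{n,p(n)}^{L,1}}$). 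Granting this, I would feed the expansions of Theorem~\ref{cumulant}, taken with $\beta=1$ (so that $\beta/2=1/2$) and at the argument $z/2$, into the display: in each of the five regimes of $n-p(n)$ the part linear in $z$ gives the centering, the coefficient of $z^2$ gives the parameter $t_n$ via $\eta(z)=z^2/2$, and the remaining terms give the limiting function. The substitution $w=z/2$ then rescales the Gaussian parameter by a factor $1/4$ and turns $\Phi_{1/2}(w)$ into $\Phi_{1/2}(z/2)$, which is precisely ``the results of Theorem~\ref{theoremmodL} with $z$ replaced by $z/2$'', the mod-stable regime on $i\,\re$ of case~(e) included.

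For the simplex I would use \eqref{MellinSim1},
$$
\log \E \bigl( (p(n)!\,VS_{n,p(n)})^{z} \bigr) = \frac z2 \log (p(n)+1) + \log \E \bigl( (VP_{n,p(n)})^{z} \bigr),
$$
so that $\log(p(n)!\,VS_{n,p(n)})$ differs from $\log VP_{n,p(n)}$ only by the deterministic shift $\tfrac12\log(p(n)+1)$, i.e.\ $p(n)!\,VS_{n,p(n)} \equaldist \sqrt{p(n)+1}\;VP_{n,p(n)}$. Because the extra summand $\tfrac z2\log(p(n)+1)$ is linear in $z$, it is absorbed into the centering and changes neither the parameter $t_n$ nor the limiting function $\psi$; in the normalisation of the previous paragraph (where $z$ is already replaced by $z/2$ and the expectations are recorded in Laguerre units) this amounts to adding $\log(p(n)+1)$ to each $\mu_i$, as claimed.

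The calculations are routine rescalings, so I expect no serious difficulty in the algebra. The one point that must be handled with care is the analytic-continuation step that lifts the even-integer moment identities \eqref{MellinPa1} and \eqref{MellinSim1} to identities of Laplace transforms on the strip ${\mathcal S}_{1}$ (and on $i\,\re$ in the fixed-$p$ case), together with checking that $z\mapsto z/2$ keeps one inside this strip and preserves both the local uniform convergence and, for the accompanying Berry--Esseen and local limit statements, the zone of control of Definition~\ref{defzone}. Once this is in place, every assertion is inherited verbatim from Theorem~\ref{theoremmodL}.
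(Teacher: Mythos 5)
Your proposal is correct and is essentially the paper's own (two-line) argument: identify the Mellin transform \eqref{MellinPa1} with the $\beta=1$ Laguerre Laplace exponent \eqref{MellinL} evaluated at $z/2$, invoke Theorem \ref{theoremmodL}, and absorb the extra linear term $\tfrac z2\log(p(n)+1)$ from \eqref{MellinSim1} into the centering for the simplex. Your cautionary point about lifting the even-integer moment identity to the whole strip is already settled by the exact distributional identity $VP_{n,p(n)}^2 \equaldist \det W_{n,p(n)}^{L,1}$ (the squared parallelotope volume \emph{is} the Wishart determinant), which you yourself record.
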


\begin{proof}
The case of a paralleltope follows from \eqref{MellinPa1} and Theorem \ref{theoremmodL}. For the case of a simplex, see \eqref{MellinSim2}
and Theorem \ref{theoremmodL}.
\end{proof}

\begin{theorem}[Beta and spherical model, Parallelotope] \label{BSP}
The logarithm of the $p(n)$-dimensional volume of the parallelotope in the beta model (see \eqref{MellinPa2})
and in the spherical model (see \eqref{MellinPa4}) satisfy the following results on ${\mathcal S}_{1/2}$: 
\bigskip

\hspace{-1cm}{
\begin{tabular}{|c|c|c|c|c|} \hline
condition & centred version of & mod-$\phi$ & $t_n$ & limiting function \\ \hline
$p(n) =n$  & $\log (n! V P_{n,n})$ &mod-$N(0,1)$ & $\frac{2}{\beta} \log n$ & $\exp (\Phi_{1/2}(z) - \frac{z^2}{2})$ \\ \hline
$n-p(n) \to 0$  & $\log (p(n)! V P_{n,p(n)} )$ &mod-$N(0,1)$ & $\frac{2}{\beta} \log n$ & $\exp (\Phi_{1/2}(z) -\frac{z^2}{2})$ \\ \hline
$n-p(n)=c$  & $\log (p(n)! V P_{n,p(n)})$ &mod-$N(0,1)$ & $\frac{2}{\beta} \log n$ & $\exp (\Phi_{1/2}^c(z) - \frac{z^2}{2})$ \\ \hline
$n-p(n) =o(n)$ & $\log (p(n)! V P_{n,p(n)})$& mod-$N(0,1)$ & $\frac{2}{\beta} \log \bigl( \frac{n}{n-p(n)} \bigr)$ & $\exp(-z^2/2)$ \\ \hline
$p(n)=p $ & $ n\, \log (p! V P_{n,p})$ & mod-$\phi$ in $i \, \re$ & $p\, n$ & $ \bigl( 1 + z \bigr)^{-\frac{(p-1)p}{4} - \frac p2 - p \bigl(\frac{\nu}{2} - \frac 12 \bigr)}$ \\ \hline
\end{tabular}}
\bigskip

\noindent
The corresponding expectations of the log-volumes are $\mu_1(n,n) +\frac{n}{n+\nu}-n \log (\frac{n+\nu}{2})$ (with $\mu_1(n,n)$
defined in \eqref{mupn}) in the case $p(n)=n$. If $n -p(n) \to 0$ as $n \to \infty$, it is $\mu_1(p(n),n) +\frac{p(n)}{n+\nu}-p(n) \log (\frac{n+\nu}{2})$.
If $n-p(n) = c$ for a fixed $c \in \na$, 
the expectation is $\mu_2(p(n),n) + \frac{p(n)}{n+\nu} - p(n) \log (\frac{n+\nu}{2})$.
Finally for $n-p(n) =o(n)$, we have
$\mu_3(p(n),n) + \frac{p(n)}{n+\nu} - p(n) \log (\frac{n+\nu}{2})$. Here $\mu_2(p(n),n)$ and $\mu_3(p(n),n)$ are defined in \eqref{munew} and
\eqref{munew2}.

\noindent
If $p(n)=p$, we obtain the mod-$\phi$ on $i \, \re$ result with L\'evy exponent
$$
\eta(z) = \frac z2 \log (\frac 12) 
$$
and expectation $\mu_4 = p \log 2$. The case $\nu=0$ leads to the spherical model.
\end{theorem}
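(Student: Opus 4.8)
The plan is to read both the Beta model \eqref{MellinPa2} and its $\nu=0$ specialisation, the spherical model \eqref{MellinPa4}, off the single decomposition
\[
\log\E\bigl((VP_{n,p(n)})^{z}\bigr)
= L\bigl(p(n),n-p(n),\tfrac12;\tfrac z2\bigr)
- p(n)\Bigl(\log\Gamma\bigl(\tfrac{n+\nu}{2}+\tfrac z2\bigr)-\log\Gamma\bigl(\tfrac{n+\nu}{2}\bigr)\Bigr),
\]
so that only the Beta model needs to be treated. The first summand is precisely the Laguerre quantity governed by Theorem \ref{cumulant} and Theorem \ref{theoremmodL} in the case $\beta=1$ (hence $\alpha=\beta/2=1/2$), evaluated at $z/2$; I would import those statements after the substitution $z\mapsto z/2$. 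Everything then reduces to expanding the extra summand of the type \eqref{addsum1} and tracking how it redistributes among the three data of mod-$\phi$ convergence: the drift (expectation), the parameter $t_n$, and the limiting function.

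For the four regimes with $p(n)\to\infty$ I would apply Proposition \ref{Binetapp} with $m(n,\nu)=\tfrac{n+\nu}{2}$ and argument $z/2$ (taking $c(\nu)\ge 2$, so that the admissibility range $|z|<(c(\nu)/4)m^{1/6}$ is compatible with $\mathcal{S}_{1/2}$ and with \eqref{condz}). The linear-in-$z$ part of \eqref{B1} combines with the $\mu_i$ of Theorem \ref{cumulant} to reproduce the tabulated shifted expectations. The quadratic part equals $-p(n)(z/2)^2/m = -\tfrac{p(n)}{2(n+\nu)}\,z^2$; since $n-p(n)=o(n)$ in each of the four cases, one has $p(n)/(n+\nu)\to1$, so this converges to $-z^2/2$ and multiplies the Laguerre limiting function by $\exp(-z^2/2)$, exactly as tabulated. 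The essential point is that the extra summand contributes no $z^2\log n$ term, so $t_n$ is inherited unchanged from the Laguerre part of Theorem \ref{theoremmodL}.

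For the degenerate regime $p(n)=p$ fixed, mod-Gaussian convergence fails and I would follow the blueprint of Theorem \ref{theoremmodL}(e): pass to the rescaled transform $\E\bigl((VP_{n,p})^{nz}\bigr)$ and apply Stirling's formula \eqref{Stirling} both to the $p$ Laguerre factors $\Gamma(\tfrac12(n-p+k)+\tfrac{nz}{2})$ and to the single extra factor $\Gamma(\tfrac{n+\nu}{2}+\tfrac{nz}{2})$. The $O(n\log n)$ and $O(n)$ pieces of the two expansions cancel against one another; after isolating the part proportional to $t_n=pn$ (which is linear in $z$ and yields the stated L\'evy exponent $\eta$, so that $\phi$ is the corresponding degenerate, tilted skew-stable law) and the deterministic drift, the bounded residual is a power of $1+z$ with exponent $-\tfrac{p(p-1)}{4}-\tfrac{p\nu}{2}$, which agrees with the tabulated $-\tfrac{(p-1)p}{4}-\tfrac p2-p(\tfrac\nu2-\tfrac12)$. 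Reading off $\mu_4=p\log 2$ and setting $\nu=0$ then gives the spherical case.

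The main obstacle is the error bookkeeping in the growing-$p$ regime. Because of the prefactor $p(n)\sim n$, the remainder $O\bigl((|z|+|z|^2+|z|^3)/m\bigr)$ of Proposition \ref{Binetapp}, taken at face value, is only $O(1)$ after multiplication by $p(n)$, which would corrupt the limiting function. I would therefore need the sharper statement --- obtained by carrying Binet's formula \eqref{Binet1} one further order in the expansion of $\log(1+\tfrac z{\alpha})$ --- that the cubic and higher contributions are in fact $O(\mathrm{poly}(|z|)/m^{2})$, so that $p(n)\cdot O(1/m^{2})=O(1/n)\to0$ and only the genuine quadratic term survives. The second delicate point is the fixed-$p$ case, where the mutual cancellation of the leading $O(n\log n)$ and $O(n)$ terms between the two Stirling expansions has to be verified with care and the surviving residual identified with an honest infinitely divisible limiting function.
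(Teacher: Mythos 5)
Your proposal is correct and follows essentially the same route as the paper: the same decomposition of \eqref{MellinPa2} into the Laguerre term $L(p(n),n-p(n),\tfrac12;\tfrac z2)$ plus the extra Gamma-ratio, Proposition \ref{Binetapp} with $m(n,\nu)=\tfrac{n+\nu}{2}$ for the growing-$p(n)$ regimes, and Stirling's formula \eqref{Stirling} against \eqref{Stir3} for fixed $p$. Your remark that the remainder in Proposition \ref{Binetapp} must really be ${\mathcal O}(\mathrm{poly}(|z|)/m^{2})$ for the multiplication by $p(n)\sim n$ to leave an $o(1)$ error is a legitimate point that the paper passes over silently (its displayed error ${\mathcal O}((|z|+|z|^2+|z|^3)/n)$ already presupposes this sharper bound), so your patch is needed and correct.
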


\begin{proof}
The proof is the same as the proof of Lemma 4.2 in \cite{Borgoetal:2017}. We apply Proposition \ref{Binetapp} with $m(n,\nu)= \frac{n +\nu}{2}$. We get
\begin{eqnarray*}
p(n) \bigl(- \log \Gamma (m(n,\nu) +z/2) + \log \Gamma(m(n,\nu))) &=& \frac z2 \, p(n) \biggl( \frac{1}{n+\nu} - \log \bigl( \frac{n + \nu}{2} \bigr) \biggr) \\
&&\hspace{-1cm} - \frac{p(n)}{2(n + \nu)} z^2 + {\mathcal O} \biggl( \frac{|z| + |z|^2 + |z|^3}{n} \biggr).
\end{eqnarray*}
The results follow with \eqref{MellinPa2} and \eqref{MellinPa4} together with Theorem \ref{cumulant} and \ref{theoremmodL}.
Finally, we have to consider the case $p(n)=p$. From \eqref{StirAnw} we observe that
\begin{eqnarray} \label{Stir3} 
L(p,n-p,\frac 12;\frac{nz}{2})
 &\sim& n \frac z2 \bigl( p \, \log n -p\bigr)
+  p\, n \biggl( \frac{1}{2}  \log 2 + \bigl(\frac z2 + \frac{1}{2} \bigr) \log  \bigl(\frac z2 + \frac{1}{2} \bigr) \biggr) \nonumber\\
&+&  \biggl( -\frac{p(p-1)}{4} - \frac p2 \biggr) \log \bigl( 1 +z \bigr). 
\end{eqnarray}
With \eqref{Stirling}  we have
\begin{eqnarray*}
p \log \Gamma \bigl( \frac{n + \nu}{2} \bigr)  - p \log \Gamma \bigl( \frac{n + \nu}{2}  + \frac{nz}{2}\bigr)  & \sim &  n \frac z2 \bigl( p- p \, \log \frac n2 \bigr) \nonumber \\
& & \hspace{-7cm}+ p\, n \biggl( -  \bigl(\frac z2 + \frac{1}{2} \bigr) \log  \bigl(1+z \bigr) \biggr) 
-  p \biggl(  \frac{\nu}{2} - \frac 12 \biggr) \log \bigl( 1 +z \bigr). 
\end{eqnarray*}
Hence the statement is proven.
\end{proof}

\begin{remark}
Recall that $\Phi_{1/2}(\cdot)$ has the following expression, see \cite[Lemma 7.1 (2)]{Borgoetal:2017}: 
$$
\Phi_{1/2}(z) = z \biggl( - \frac 12 \log \frac 12 + \frac 12 \log (2 \pi) \biggr) - \frac 12 \log G(1 + 2 z) - \frac 12 \biggl( \log \Gamma(\frac 12) - \log \Gamma (\frac 12 +z) \biggr).
$$
\end{remark}
\bigskip

\begin{theorem}[Beta and spherical model, Simplex]
The logarithm of the $p(n)$-dimensional volume of the simplex in the beta model (see \eqref{MellinSim2})
and in the spherical model (see \eqref{MellinSim4}) satisfies the following results on ${\mathcal S}_{1/2}$: 
\bigskip

\hspace{-1cm}{
\begin{tabular}{|c|c|c|c|c|} \hline
condition & centred version of & mod-$\phi$ & $t_n$ & limiting function \\ \hline
$p(n) =n$  & $\log (n! V S_{n,n})$ &mod-$N(0,1)$ & $\frac{2}{\beta} \log n$ & $\exp (\Phi_{1/2}(z) - \frac{z^2}{2})$ \\ \hline
$n-p(n) \to 0$  & $\log (p(n)! V S_{n,p(n)} )$ &mod-$N(0,1)$ & $\frac{2}{\beta} \log n$ & $\exp (\Phi_{1/2}(z) -\frac{z^2}{2})$ \\ \hline
$n-p(n)=c$  & $\log (p(n)! V S_{n,p(n)})$ &mod-$N(0,1)$ & $\frac{2}{\beta} \log n$ & $\exp (\Phi_{1/2}^c(z) - \frac{z^2}{2})$ \\ \hline
$n-p(n) =o(n)$ & $\log (p(n)! V S_{n,p(n)})$& mod-$N(0,1)$ & $\frac{2}{\beta} \log \bigl( \frac{n}{n-p(n)} \bigr)$ & $\exp(-z^2/2)$ \\ \hline
$p(n)=p $ & $ n\, \log (p! V S_{n,p})$ & mod-$\phi$ in $i \, \re$ & $p\, n$ & $ \psi^p(z)$ \\ \hline
\end{tabular}}
\bigskip

\noindent
The corresponding expectations of the log-volumes are $\mu_1(n,n) + \frac{n+1}{n+\nu}-(n+1) \log (\frac{n+\nu}{2})
+ 2 \log n - \log 2)$ (with $\mu_1(n,n)$
defined in \eqref{mupn}) in the case $p(n)=n$. If $n -p(n) \to 0$ as $n \to \infty$, it is $\mu_1(p(n),n) + \frac{p(n)+1}{n+\nu}-(p(n)+1) \log (\frac{n+\nu}{2})
+  \log (n \, p(n))- \log 2)$. If $n-p(n) = c$ for a fixed $c \in \na$, 
the expectation is $\mu_2(p(n),n) + \frac{p(n)+1}{n+\nu} - (p(n)+1) \log (\frac{n+\nu}{2}) + \log (n \, p(n)) -\log 2$, with $\mu_2(p(n),n)$ defined in \eqref{munew}.
Finally for $n-p(n) =o(n)$, we obtain that
$\mu_3(p(n),n) + \frac{p(n)+1}{n+\nu} - (p(n)+1) \log (\frac{n+\nu}{2}) +  \log (n \, p(n)) -\log 2$, with $\mu_3(p(n),n)$ defined in \eqref{munew2}, is the correct expectation. 

\noindent
If $p(n)=p$ we obtain the mod-$\phi$ on $i \, \re$ result with the L\'evy exponent
$$
\eta(z) = (p+1) \bigl( \frac{z+1}{2} \bigr) \log \biggl(\frac{(z+1)(p+1)}{(z+1)p+1} \biggr) - \frac{pz}{2} \log 2 - \bigl( \frac{z+1}{2} \bigr) \log(z+1),
$$
expectation $\mu_4 = p \log 2$ and limiting function
$$
\psi^p(z) = (1+z)^{-\frac{p(p-1)}{4} - \frac p2 -(p+1) \bigl( \frac{\nu}{2} - \frac 12 \bigr)} \biggl( \frac{(z+1)(p+1)}{(z+1)p+1}\biggr)^{\frac{p(\nu-2)+\nu-1}{2}}.
$$
The case $\nu=0$ leads to the spherical model.

\end{theorem}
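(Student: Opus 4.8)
The plan is to reduce everything to the parallelotope case, already settled in Theorem \ref{BSP}, and then to analyse the single extra factor distinguishing the simplex from the parallelotope. By the moment identities \eqref{MellinSim2} (beta model) and \eqref{MellinSim4} (spherical model, i.e. $\nu=0$),
$$
\log \E\bigl((p(n)!\,VS_{n,p(n)})^{z}\bigr)=\log f(n,p(n),\nu,z)+\log \E\bigl((VP_{n,p(n)})^{z}\bigr),
$$
with the second summand expanded in \eqref{MellinPa2} resp.\ \eqref{MellinPa4}. Since that summand contributes exactly the mod-$\phi$ structure of Theorem \ref{BSP}, it suffices to expand $\log f$ and to read off how it shifts the expectation and modifies the limiting function in each regime.

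For the four growing regimes I would split $\log f$ into its two Gamma ratios. The first ratio $\log\Gamma(\tfrac{n+\nu}{2})-\log\Gamma(\tfrac{n+\nu}{2}+\tfrac z2)$ is handled verbatim as in the proof of Theorem \ref{BSP}, via Proposition \ref{Binetapp} with $m(n,\nu)=\tfrac{n+\nu}{2}$; the one extra copy adds $\tfrac{1}{n+\nu}-\log\tfrac{n+\nu}{2}$ to the mean while its quadratic part is $O(1/n)$. The genuinely new term is the second ratio $\Gamma(M+(p(n)+1)\tfrac z2)/\Gamma(M+p(n)\tfrac z2)$ with base $M=\tfrac{p(n)(n+\nu-2)+(n+\nu)}{2}\sim\tfrac{p(n)\,n}{2}$. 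Writing it as $\Gamma(w+\tfrac z2)/\Gamma(w)$ with $w=M+p(n)\tfrac z2\to\infty$ and applying Stirling's formula \eqref{Stirling}, its logarithm equals $\tfrac z2\log w+o(1)$, and since $\log w\sim\log M\sim\log(n\,p(n))-\log 2$ this produces, after the $z\mapsto z/2$ normalisation used for volumes, precisely the extra mean summand $\log(n\,p(n))-\log 2$ recorded in the statement (and $2\log n-\log 2$ when $p(n)=n$, as then $M\sim n^{2}/2$). Because the shift $\tfrac z2$ stays bounded while $w$ diverges, the induced $z^{2}$-coefficient is $O(1/M)\to0$, so the limiting functions $\exp(\Phi_{1/2}(z)-\tfrac{z^{2}}{2})$, $\exp(\Phi_{1/2}^{c}(z)-\tfrac{z^{2}}{2})$ and $\exp(-z^{2}/2)$ are inherited unchanged from Theorem \ref{BSP}.

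For the fixed regime $p(n)=p$ I would instead proceed exactly as in the proof of Theorem \ref{theoremmodL}, case (e), and of the fixed-$p$ part of Theorem \ref{BSP}: apply Stirling's formula \eqref{Stirling} term by term to each of the four Gamma functions in $f(n,p,\nu,nz)$, now at the scaling $nz$, and add the parallelotope expansion \eqref{Stir3}. Collecting the $pn$-order terms yields the stated Lévy exponent $\eta(z)$ — the ratio $\tfrac{(z+1)(p+1)}{(z+1)p+1}$ emerging from the two leading arguments $M+(p+1)\tfrac{nz}{2}\sim\tfrac{(p+1)n}{2}(z+1)$ and $M+p\tfrac{nz}{2}\sim\tfrac n2\bigl(p(z+1)+1\bigr)$ of the second ratio — while the $O(1)$ terms assemble into $\psi^{p}(z)$, the exponent $-\tfrac{p(p-1)}{4}-\tfrac p2-(p+1)(\tfrac\nu2-\tfrac12)$ reflecting the beta weight $(\nu-2)/2$ carried through \eqref{MellinPa2}. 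As in case (e) of Theorem \ref{theoremmodL}, one then checks that $\eta$ is the Lévy exponent of a tilted, totally skewed infinitely divisible stable law, so that genuine mod-$\phi$ convergence on $i\,\re$ holds; the spherical model follows by setting $\nu=0$ throughout.

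The main obstacle is the Stirling bookkeeping in the fixed-$p$ case, precisely because in the second Gamma ratio both the base $M$ and both shifts scale like $n$, so Proposition \ref{Binetapp} is unavailable (its hypothesis $m\le c(\nu)n$ and its small-shift expansion both fail). I would therefore invoke \eqref{Stirling} directly with $a=\tfrac{(p+1)n}{2}$ in the numerator and $a=\tfrac n2$ in the denominator, and the delicate point is to separate the three scales cleanly — the $pn$-order nonlinear-in-$z$ part into $\eta$, the $O(n\log n)$ linear-in-$z$ part into the expectation, and the $O(1)$ part into $\log\psi^{p}$ — while verifying that the $O(1/z)$ Stirling remainders stay negligible after multiplication by the $O(n)$ prefactors on $i\,\re$.
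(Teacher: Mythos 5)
Your proposal follows essentially the same route as the paper: reduce to the parallelotope case via \eqref{MellinSim2}/\eqref{MellinSim4}, expand the first Gamma ratio of $\log f$ with Proposition \ref{Binetapp} at $m(n,\nu)=\tfrac{n+\nu}{2}$, extract the mean shift $\tfrac z2(\log (n\,p(n))-\log 2)$ from the second ratio, and treat $p(n)=p$ by Stirling's formula \eqref{Stirling} at scale $nz$ added to \eqref{Stir3}. The only (harmless, and arguably cleaner) deviation is that you expand the second ratio as $\Gamma(w+\tfrac z2)/\Gamma(w)$ with bounded shift and diverging base $w=M+p(n)\tfrac z2$, whereas the paper invokes Proposition \ref{Binetapp} directly; both yield $\tfrac z2(\log(n\,p(n))-\log 2)+o(1)$, and your variant avoids the large-shift issue that a literal double application of the proposition would raise when $p(n)$ grows.
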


\begin{proof}
With \eqref{MellinSim2}, we first have to consider $(p(n)+1) \bigl(- \log \Gamma (m(n,\nu) +z/2) + \log \Gamma(m(n,\nu)) \bigr)$.
This term can be handled using Proposition \ref{Binetapp} with $m(n,\nu)= \frac{n +\nu}{2}$ and is equal to
$$
\frac z2 \, (p(n)+1)  \biggl( \frac{1}{n+\nu} - \log \bigl( \frac{n + \nu}{2} \bigr) \biggr) - \frac{p(n)+1}{2(n + \nu)} z^2 + {\mathcal O} \biggl( \frac{|z| + |z|^2 + |z|^3}{n} \biggr).
$$
Moreover, with Proposition \ref{Binetapp} we obtain
$$
\log \frac{ \Gamma \biggl( \frac{p(n)(n + \nu -2) + (n + \nu)}{2} + (p(n)+1) \frac z2 \biggr)}
{\Gamma \biggl( \frac{p(n)(n + \nu -2) + (n + \nu)}{2} + p(n) \frac z2 \biggr)}= \frac z 2 \bigl( \log (p(n) \, n) - \log 2 \bigr) + o(1).
$$
The results follow with \eqref{MellinSim2} and \eqref{MellinSim4} together with Theorem \ref{BSP}.
Finally, we have to consider the case $p(n)=p$.  With \eqref{Stirling}  we have
\begin{eqnarray*}
(p+1) \log \Gamma \bigl( \frac{n + \nu}{2} \bigr)  - (p+1) \log \Gamma \bigl( \frac{n + \nu}{2}  + \frac{nz}{2}\bigr)  & \sim &  n \frac z2 \bigl( (p+1) \bigl(1- \, \log \frac n2 \bigr)\bigr) \nonumber \\
& & \hspace{-7cm}+ (p+1)\, n \biggl( -  \bigl(\frac z2 + \frac{1}{2} \bigr) \log  \bigl(1+z \bigr) \biggr) 
-  (p+1) \biggl(  \frac{\nu}{2} - \frac 12 \biggr) \log \bigl( 1 +z \bigr). 
\end{eqnarray*}
Similarly we get
\begin{eqnarray*}
\log \Gamma \biggl( \frac{n}{2} (z+1)(p+1) + \frac{p(\nu-2) + \nu}{2} \biggr)  - \log \Gamma \biggl( \frac{n}{2} ((z+1) p+1) + \frac{p(\nu-2) + \nu}{2} \biggr)  & \sim &  \\
&& \hspace{-12cm}n \frac z2 \bigl( \log \frac n2 -1 \bigr) + \frac n2 (z+1)(p+1) \log \biggl( \frac{(z+1)(p+1)}{(z+1)p +1} \biggr)\\
&& \hspace{-12cm}+ \biggl( \frac{p(\nu-2) + \nu}{2} - \frac 12 \biggr) \log  \biggl( \frac{(z+1)(p+1)}{(z+1)p +1} \biggr).
\end{eqnarray*}
Hence the statement is proven.
\end{proof}

Interestingly enough, the Beta prime model (see \eqref{MellinPa3} and \eqref{MellinSim3}) behaves {\it differently!}
The reason for that is that in \eqref{MellinPa3}, the first summand is
$$
p(n) \biggl( \log \Gamma \bigl( \frac{\nu}{2} - \frac z2 \bigr) - \log \Gamma \bigl( \frac{\nu}{2} \bigr) \biggr) = : p(n) g(z),
$$
where $g(z)$ can be represented - for example - with the help of Binet's first formula \eqref{Binet1}. But if $p(n)$ depends on $n$,
this term is never part of the expectation of the log-volume of the random parallelotope. Moreover, $L(p(n),n-p(n), 1/2; z/2)$ leads to the parameter sequence 
$t_n = 2 \log n$, which does not correspond to the sequence $p(n)$.
Hence $g(z)$ is not part of the $\eta$-function in the definition of mod-$\phi$ convergence. With \eqref{MellinSim3} the same is true
for the log-volume of a random simplex in the Beta prime model. The only case where we expect mod-$\phi$ convergence is for a fixed $p$:

\begin{theorem}[Beta prime model, Parallelotope and Simplex]
Consider the sequence $n \bigl( \log (p! V P_{n,p}) - (p \log n -p) \bigr)$; it converges mod-$\phi$ on $i \, \re$ with
parameters $t_n= p \,n$ and limiting function
$$
\psi(z) = e^p (1+z)^{- \frac{p(p-1)}{4} - \frac p2}  \frac{\Gamma \bigl( \frac{\nu}{2} - \frac z2 \bigr)}{\Gamma \bigl( \frac{\nu}{2} \bigr)}.
$$
Here the L\'evy exponent is
\begin{equation} \label{levy}
\eta(z) = \frac 12 \log 2 + \bigl( \frac{z+1}{2} \bigr) \log \bigl( \frac{z+1}{2} \bigr).
\end{equation}
Next, consider the sequence $ n \bigl( \log ( p! V S_{n,p}) - (p \log n -p) \bigr)$; it converges mod-$\phi$ on $i \, \re$, with
parameters $t_n= p \,n$, limiting function
$$
\psi(z) = e^{p+1} (1+z)^{- \frac{p(p-1)}{4} - \frac p2} \frac{\Gamma \bigl( \frac{\nu}{2} - \frac z2 \bigr)}{\Gamma \bigl( \frac{\nu}{2} \bigr)}\, 
\frac{\Gamma \bigl( \frac{(p+1)\nu}{2} - p \frac z2 \bigr)}{\Gamma \bigl( \frac{(p+1)\nu}{2} - (p+1) \frac z2 \bigr)},
$$
and the same L\'evy exponent \eqref{levy}.
\end{theorem}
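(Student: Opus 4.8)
The plan is to reuse the mechanism of case (e) of Theorem \ref{theoremmodL}: since $p$ is a fixed integer, the products in \eqref{MellinPa3} and \eqref{MellinSim3} have only finitely many factors, so Theorem \ref{MAIN} is too crude (its remainder $R$ does not decay after rescaling by $n$, exactly as remarked in the proof of case (e)), and instead I would feed Stirling's formula \eqref{Stirling} into each Gamma factor. Concretely, for the parallelotope I start from \eqref{MellinPa3} at moment order $nz$, namely
$$\log\E\bigl[(VP_{n,p})^{nz}\bigr] = p\log\Gamma\bigl(\tfrac{\nu}{2}-\tfrac{nz}{2}\bigr) - p\log\Gamma\bigl(\tfrac{\nu}{2}\bigr) + L\bigl(p,n-p,\tfrac12;\tfrac{nz}{2}\bigr),$$
and for the simplex I additionally carry $\log g(n,p,\nu,nz)$ from \eqref{MellinSim3}. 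The backbone is the term $L(p,n-p,\tfrac12;\tfrac{nz}{2})$, whose Stirling expansion is already recorded in \eqref{Stir3} (a consequence of \eqref{StirAnw}): it produces the $n\log n$ centering piece $n\tfrac{z}{2}(p\log n-p)$, the term $pn\,\eta(z)$ with $\eta$ as in \eqref{levy}, and the bounded remainder $\bigl(-\tfrac{p(p-1)}{4}-\tfrac{p}{2}\bigr)\log(1+z)$ which becomes the factor $(1+z)^{-\frac{p(p-1)}{4}-\frac{p}{2}}$ of $\psi$.

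The second step is the asymptotic analysis of the remaining Gamma factors. Because we work on $i\,\re$, the argument $\tfrac{\nu}{2}-\tfrac{nz}{2}=\tfrac{\nu}{2}-\tfrac{ni\xi}{2}$ tends to infinity along the imaginary direction, so I would invoke the complex Stirling formula (valid for $|\arg|<\pi$, with the principal branch fixed as in the proof of Theorem \ref{MAIN}), tracking the branch of $\log(\tfrac{\nu}{2}-\tfrac{nz}{2})$ carefully. The aim is to split this expansion into its $n\log n$, its $O(n)$, and its bounded parts: the $n\log n$ part of $p\log\Gamma(\tfrac{\nu}{2}-\tfrac{nz}{2})$ must cancel against the $n\log n$ part coming from $L$, the $O(n)$ contributions (together with the prescribed centering $n(p\log n-p)$) have to assemble into $t_n\eta(z)$ with $t_n=pn$, and the genuinely bounded contribution is what leaves the finite ratio $\Gamma(\tfrac{\nu}{2}-\tfrac{z}{2})/\Gamma(\tfrac{\nu}{2})$ inside $\psi$. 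The constant $e^{p}$ (respectively $e^{p+1}$) should emerge from the constant terms produced by Stirling/Binet on the $p$ (respectively $p+1$) Gamma factors involved.

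Once the expansion $\log\E[e^{zX_n}]-t_n\eta(z)\to\log\psi(z)$ is established locally uniformly for $z\in i\,\re$, it remains to identify $\phi$. Exactly as in case (e) of Theorem \ref{theoremmodL}, the function $\eta$ in \eqref{levy} is the Lévy exponent of a tilted, totally skewed $1$-stable law $\phi_{c,1,-1}$; I would cite \cite[Section 1.2]{stablebook} (Proposition 1.2.12) and \cite[Chapter 2]{Sato:book} to certify that such laws are non-constant and infinitely divisible, as Definition \ref{moddef} requires. The simplex claim then follows by the same computation after also expanding the two Gamma factors of $g(n,p,\nu,nz)$ from \eqref{MellinSim3}; their arguments differ by $\tfrac{nz}{2}$, so their ratio again contributes only an $O(n)$ piece to $\eta$ and the centering and a bounded ratio $\Gamma(\tfrac{(p+1)\nu}{2}-p\tfrac{z}{2})/\Gamma(\tfrac{(p+1)\nu}{2}-(p+1)\tfrac{z}{2})$ to $\psi$.

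I expect the main obstacle to be precisely the bookkeeping of the factor $p\log\Gamma(\tfrac{\nu}{2}-\tfrac{nz}{2})$ along the imaginary axis. Unlike the Beta and spherical models of Theorem \ref{BSP}, where the large Gamma arguments are of the transparent form $\tfrac{n(1+z)}{2}$, here the argument has a \emph{fixed} real part and a purely imaginary growing part, so the branch of the logarithm and the exact cancellation of the $n\log n$ terms against $L$ must be controlled with care; this is the very feature that makes the Beta prime model behave differently and is the reason no mod-$\phi$ convergence survives once $p=p(n)$ grows. As an alternative that may organise the calculation more cleanly, one can represent the $g$-contribution through Binet's first formula \eqref{Binet1} rather than Stirling, which isolates the finite Gamma ratio directly at the cost of estimating an oscillatory Binet integral as $n\to\infty$.
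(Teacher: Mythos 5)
Your overall route coincides with the paper's: the paper's entire proof of this theorem is the single sentence that it is ``an immediate consequence of \eqref{MellinPa3} and \eqref{MellinSim3} combined with \eqref{Stir3}'', which is exactly your first step (substitute $z\mapsto nz$, read off the backbone $L\bigl(p,n-p,\tfrac12;\tfrac{nz}{2}\bigr)$ from \eqref{Stir3}, and then deal with the remaining Gamma factors). The difference is that you try to spell out the step the paper suppresses, and that is precisely where your plan cannot be completed as written.

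The breaking point is your claim that the Stirling expansion of $p\log\Gamma\bigl(\tfrac{\nu}{2}-\tfrac{nz}{2}\bigr)$ leaves, as its ``genuinely bounded contribution'', the factor $\Gamma\bigl(\tfrac{\nu}{2}-\tfrac z2\bigr)/\Gamma\bigl(\tfrac{\nu}{2}\bigr)$ of $\psi$. Stirling at the large argument $w=\tfrac{\nu}{2}-\tfrac{nz}{2}$ gives $(w-\tfrac12)\log w-w+\tfrac12\log(2\pi)+o(1)$; after separating the $n\log n$ and $O(n)$ parts, what is left is $p\,\tfrac{\nu-1}{2}\log(-z)+\tfrac p2\log(2\pi)$ together with a $z$-independent but logarithmically divergent term $p\,\tfrac{\nu-1}{2}\log\tfrac n2$. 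An asymptotic expansion at infinity has no memory of the Gamma function at the fixed argument $\tfrac{\nu}{2}-\tfrac z2$, and switching to Binet's formula \eqref{Binet1} does not change this: the finite ratio can only survive into $\psi$ if its argument is \emph{not} rescaled by $n$, which the substitution $z\mapsto nz$ forces. The same objection applies to the simplex factor $\Gamma\bigl(\tfrac{(p+1)\nu}{2}-\tfrac{pnz}{2}\bigr)/\Gamma\bigl(\tfrac{(p+1)\nu}{2}-\tfrac{(p+1)nz}{2}\bigr)$, whose two arguments both grow linearly in $n$, so their ratio contributes at order $n\log n$, not $O(n)$ as you assert. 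Your leading-order bookkeeping is also incomplete: by \eqref{Stir3} the coefficient of $zn\log n$ in $L\bigl(p,n-p,\tfrac12;\tfrac{nz}{2}\bigr)$ is $+\tfrac p2$, while that of $p\log\Gamma\bigl(\tfrac{\nu}{2}-\tfrac{nz}{2}\bigr)$ is $-\tfrac p2$; these two already cancel each other, so the prescribed centering $-nz(p\log n-p)$ survives in its entirety at order $n\log n$ and $\psi_n$ cannot converge for the variable as centred. (A quick sanity check that something must give: Definition \ref{moddef} forces $\psi(0)=\lim_n\psi_n(0)=1$ since $\eta(0)=0$ here, whereas the stated limiting function has $\psi(0)=e^p$.) So the computation you outline, carried through honestly, does not produce the stated limiting function; to repair the argument one must either leave the extra Gamma factors unexpanded, which is incompatible with the rescaling, or accept a different centering, L\'evy exponent and $\psi$ than the ones in the statement.
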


\begin{proof}
This is an immediate consequence of \eqref{MellinPa3} and \eqref{MellinSim3} combined with \eqref{Stir3}.
\end{proof}

We do not formulate the corresponding extended central limit theorems, precise deviations, large and moderate deviation principles 
and Berry-Esseen bounds for the log-volumes of random parallelotopes and simplices, because these results can be stated as in Theorems \ref{theoremmodL}, \ref{exCLT}, \ref{pLDP}, Corollary \ref{LDP} and Theorem \ref{BE}.

\begin{remark} The authors of the preprint arXiv:1708.00471v1 have been informed last year, that moment formulas
due to Mathai \cite{Mathai:1999} and Miles \cite{Miles:1971}, presented in the introduction, are examples of models with Gamma type moments and moreover behave as the Laguerre ensemble in random matrix theory. Knowing the work of \cite{Borgoetal:2017} before it, we decided
to collect our observations after the posting of \cite{Borgoetal:2017}. The results in this subsection include the results
of arXiv:1708.00471v1. 
Moreover, the spherical model (uniform Gram ensembles)
has already been considered in \cite{Rouault:2007} and \cite{Borgoetal:2017}. 
\end{remark}

\section{Appendix}
In the appendix we are collecting some well known facts about the Gamma function and the Barnes $G$ function.
For $z \in \cn$ with $\operatorname{Re}(z) >0$, the complex Gamma function is given by
$$
\Gamma(z) = \int_0^{\infty} e^{-t} t^{z-1} \, dt.
$$
The first Binet's formula is saying that 
\begin{equation} \label{Binet1.2}
\log \Gamma(z) = \bigl( z - \frac 12 \bigr) \log z - z +1 + \int_0^{\infty} \varphi(s) (e^{-sz} - e^{-s}) \, ds, \quad \operatorname{Re}(z) >0,
\end{equation}
see \cite[p.242]{Whittaker/Watson:1996}.
Here, the function $\varphi$ is given by $\varphi(s) = \bigl( \frac 12 - \frac 1s + \frac{1}{e^s-1} ) \frac 1s$ and
for every $s \geq 0$ satisfies $0 < \varphi(s) \leq \lim_{s \to 0} \varphi(s) = \frac{1}{12}$. 
The second Binet's formula is
\begin{equation} \label{Binet2}
\log \Gamma(z) = \bigl( z - \frac 12 \bigr) \log z - z + \frac 12 \log (2 \pi) + 2 \int_0^{\infty} \frac{\arctan (s/z)}{e^{2 \pi s} -1} \, ds, \quad \operatorname{Re}(z) >0,
\end{equation}
where $\arctan y := \int_0^y \frac{dt}{1+t^2}$ for any complex $y$, with integration along a straight line, see \cite[p.243]{Whittaker/Watson:1996}.

The derivative of the logarithm of the Gamma function $\psi(z)$, called the Digamma function, can be represented, differentiating \eqref{Binet1.2}, as
\begin{equation} \label{digamma}
\Psi(z) = \log z - \int_0^{\infty} e^{-sz} \bigl( s \varphi(s)  + \frac 12 \bigr) \, ds
\end{equation}
and
\begin{equation} \label{boundphi}
0 < s \varphi(s) + \frac 12 < 1.
\end{equation}

The Barnes $G$ function is defined as the solution of
$$
G(z+1) = G(z) \Gamma(z).
$$
Its derivative satisfies (see \cite[p.258]{Whittaker/Watson:1996})
\begin{equation} \label{logG}
\frac{G'(z)}{G(z)} = (z-1) \Psi(z) - z + \frac 12 \log (2 \pi) + \frac 12.
\end{equation}
It is known that
\begin{equation} \label{Barnes1}
\log G(z+1) = \frac{z(z-1)}{2} + \frac{z}{2} \log (2 \pi) + z \log \Gamma(z) - \int_0^z \log \Gamma(x) \, dx, \quad \operatorname{Re}(z) >0,
\end{equation}
see \cite{Barnes:1900}.
If we put the second Binet's formula \eqref{Binet2} into \eqref{Barnes1} we obtain for all $z \in \cn$ with $\operatorname{Re}(z) >0$
\begin{eqnarray*} \label{Barnes2}
\log G(z+1) & = &\frac{z}{2} - \frac{3 z^2}{2} + z \log (2 \pi) + z \biggl( z - \frac 12 \biggr) \log z - \frac{z}{2} \biggl( 1 - \frac{z}{2} + (z-1) \log z \biggr) \\
&& + \frac{z^2}{2} - \frac{z}{2} \log(2 \pi) - 2 \int_0^{\infty} \frac{ \frac 12 \log(z^2+s^2) - s \log s}{e^{2 \pi s} -1} \, ds,
\end{eqnarray*}
which gives
\begin{equation} \label{Barnes2}
\log G(z+1) = \frac{z^2}{2} \log z - \frac 34 z^2 + \frac{z}{2} \log (2 \pi) - \int_0^{\infty} \log (1 + z^2 s^{-2}) \frac{ s \, ds}{e^{2 \pi s} -1}.
\end{equation}

We apply the famous Abel-Plana summation formula, see \cite[p.290]{Olver:1997}:

\begin{theorem} \label{Abel-Plana}
Let $f$ be a holomorphic function on the strip $\{ z \in \cn: 0 \leq \operatorname{Re}(z) \leq n \}$. Suppose that
$f(z) = o \bigl( e^{2 \pi |\operatorname{Im}(z)|} \bigr)$ as $\operatorname{Im}(z) \to \pm \infty$, uniformly with respect to
$\operatorname{Re}(z) \in [0,n]$. Then
\begin{eqnarray*}
\sum_{k=0}^{n-1} f(k)& =& \int_0^n f(s) \, ds + \frac 12 f(0) - \frac 12 f(n) + i \int_0^{\infty} \frac{f(is)- f(-is)}{e^{2 \pi s} -1} \, ds\\
&& - i \int_0^{\infty} \frac{f(n + is) - f(n -is)}{e^{2 \pi s} -1} \, ds.
\end{eqnarray*}
\end{theorem}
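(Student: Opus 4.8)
The statement is the classical Abel--Plana summation formula, and the natural route is a residue/contour argument built on the summation kernel $\frac{1}{e^{2\pi i z}-1}$. The plan is to start from the fact that this kernel is meromorphic with simple poles exactly at the integers, and that near $z=k$ one has $e^{2\pi i z}-1\sim 2\pi i\,(z-k)$, so that $\operatorname{Res}_{z=k}\frac{f(z)}{e^{2\pi i z}-1}=\frac{f(k)}{2\pi i}$. Consequently, once a contour is arranged to run through the strip $0\le\operatorname{Re}(z)\le n$, the poles at $1,\dots,n-1$ lie strictly inside and reproduce $\sum_{k=1}^{n-1}f(k)$ via $\frac{1}{2\pi i}\oint$, while the two endpoints $0$ and $n$, which sit on the vertical sides, will be handled by semicircular indentations that contribute the half-weights $\tfrac12 f(0)$ and $-\tfrac12 f(n)$.

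The key analytic input is the reflection identity $\frac{1}{e^{2\pi i z}-1}+\frac{1}{e^{-2\pi i z}-1}=-1$, which lets me trade the two representations of the kernel. The point is that $\frac{1}{e^{2\pi i z}-1}$ decays like $e^{-2\pi\operatorname{Im}(z)}$ as $\operatorname{Im}(z)\to-\infty$ but tends to $-1$ as $\operatorname{Im}(z)\to+\infty$, whereas $\frac{1}{e^{-2\pi i z}-1}$ does the opposite. I would therefore work on the closed strip $0\le\operatorname{Re}(z)\le n$, using on the upper and lower portions the representation of the kernel that decays in the relevant half-plane, and apply Cauchy's theorem on the two half-strips (indented around the real-axis integers). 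Because the growth hypothesis $f(z)=o(e^{2\pi|\operatorname{Im}(z)|})$ is exactly calibrated to the $e^{-2\pi|\operatorname{Im}(z)|}$ decay of the chosen kernel, the contributions of the far horizontal segments at height $\pm T$ vanish as $T\to\infty$, leaving the two vertical sides, the real-axis segment, the indentation semicircles, and the residues.

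The vertical sides are the cleanest piece and I would compute them first. Parametrising the left side $\operatorname{Re}(z)=0$ by $z=\pm is$ and the right side by $z=n\pm is$, and using $e^{\mp 2\pi i(is)}=e^{\pm 2\pi s}$ together with $e^{-2\pi i n}=1$, both the upper and lower contributions produce the same denominator $e^{2\pi s}-1$, so the sides collapse to $i\int_0^\infty\frac{f(is)-f(-is)}{e^{2\pi s}-1}\,ds$ and $-i\int_0^\infty\frac{f(n+is)-f(n-is)}{e^{2\pi s}-1}\,ds$, with the factor $i$ coming from $dz=i\,ds$ and the signs from orientation. The remaining term $\int_0^n f(s)\,ds$ is then forced by the non-decaying part of the kernel: when the reflection identity is invoked to pass from one representation to the other along the real segment, the constant $-1$ it supplies, integrated against $f$ over $[0,n]$, is precisely what generates $\int_0^n f(s)\,ds$. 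Collecting the residues at $1,\dots,n-1$, the endpoint half-residues, this real-axis term, and the two vertical integrals yields the asserted formula.

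The conceptual steps are routine; the real work is the sign and weight bookkeeping, and I expect that to be the main obstacle. The three delicate points I would treat with care are: (i) the orientation accounting that turns the real-axis contribution into $+\int_0^n f$ and pins the endpoint weights as $+\tfrac12 f(0)$ and $-\tfrac12 f(n)$ — the asymmetry arising because $0$ and $n$ lie on opposite vertical sides and the indentations are oriented oppositely there; (ii) the justification that the horizontal segments vanish as $T\to\infty$, which is exactly where the growth assumption is consumed and must be matched against the decaying representation in each half-plane; and (iii) the interchange of limit and integration needed to replace the finite-height vertical integrals by the improper integrals over $(0,\infty)$. Point (i) is the likeliest source of friction, since it is easy to recover the formula only up to an overall sign or an off-by-$\tfrac12$ at the endpoints; alternatively, one can sidestep much of this by first establishing the half-line version over $[0,\infty)$ and telescoping, but the direct strip computation above is the cleaner path to the stated finite identity.
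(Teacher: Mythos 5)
The paper does not actually prove this statement: Theorem \ref{Abel-Plana} is imported verbatim from Olver \cite[p.~290]{Olver:1997} as a known tool, so there is no internal proof to compare against. Your proposal is the standard contour-integral derivation (essentially the one in Olver and in Whittaker--Watson), and as a strategy it is sound: work in the two half-strips separately, pair each with the representation of the kernel ($(e^{-2\pi i z}-1)^{-1}$ above the real axis, $(e^{2\pi i z}-1)^{-1}$ below) that decays in that half-plane, use the reflection identity to convert the two principal-value integrals along $[0,n]$ into $-\int_0^n f$, read off the stated integrals from the vertical sides, and use the growth hypothesis to kill the horizontal edges at height $\pm T$. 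Carried out carefully, this closes to a complete proof of the stated identity.

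Two pieces of bookkeeping need correction, one of which you yourself flagged as the likely sticking point. First, a small slip: as $\operatorname{Im}(z)\to-\infty$ the kernel $(e^{2\pi i z}-1)^{-1}$ decays like $e^{+2\pi\operatorname{Im}(z)}=e^{-2\pi|\operatorname{Im}(z)|}$, not like $e^{-2\pi\operatorname{Im}(z)}$ (you state the correct calibration against $f=o(e^{2\pi|\operatorname{Im}(z)|})$ later, so this is only a typo-level issue). Second, and more substantively, your proposed mechanism for the endpoint weights is wrong. In the two-half-strip setup none of the poles $0,1,\dots,n$ is ``strictly inside'' a contour: they all sit on the shared real-axis boundary, each interior integer picks up $\tfrac12 f(k)+\tfrac12 f(k)=f(k)$ from the two indentations, and the corner quarter-circles at $0$ and at $n$ contribute with the \emph{same} sign, namely $+\tfrac14$ from each half-strip, giving $+\tfrac12 f(0)$ and $+\tfrac12 f(n)$. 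The contour argument therefore naturally produces $\sum_{k=0}^{n}f(k)=\int_0^n f+\tfrac12 f(0)+\tfrac12 f(n)+(\text{vertical integrals})$; the $-\tfrac12 f(n)$ in the statement arises solely because the sum on the left terminates at $n-1$, i.e., one subtracts $f(n)$ from both sides. It is not an orientation asymmetry between the two vertical sides, and looking for one would send you down the wrong path in exactly the step you identified as delicate.
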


\medskip

\noindent
{\bf Acknowledgement.} Lukas Knichel has been supported by the German Research Foundation (DFG) via Research Training Group RTG 2131 {\it High dimensional phenomena in probability- fluctuations and discontinuity}. The authors like to thank Martina Dal Borgo for presenting and discussing her work \cite{Borgoetal:2017}
at a summer-school in Ghiffa/Italy in September 2016, organised by the first author. 

\newcommand{\SortNoop}[1]{}\def\cprime{$'$} \def\cprime{$'$}
  \def\polhk#1{\setbox0=\hbox{#1}{\ooalign{\hidewidth
  \lower1.5ex\hbox{`}\hidewidth\crcr\unhbox0}}} \def\cprime{$'$}
\providecommand{\bysame}{\leavevmode\hbox to3em{\hrulefill}\thinspace}
\providecommand{\MR}{\relax\ifhmode\unskip\space\fi MR }
\providecommand{\MRhref}[2]{%
  \href{http://www.ams.org/mathscinet-getitem?mr=#1}{#2}
}
\providecommand{\href}[2]{#2}

\end{document}